\documentclass[12pt]{amsart}

\input epsf.tex


\usepackage{amsmath}
\usepackage{amsthm}
\usepackage{amsfonts}
\usepackage{amssymb}
\usepackage{graphicx}
\usepackage{latexsym}

\usepackage{amsmath,amsthm,amsfonts,amssymb}

\usepackage{epsfig}


\theoremstyle{plain}
\newtheorem{thm}{Theorem}[section]
\newtheorem{prop}[thm]{Proposition}
\newtheorem{lem}[thm]{Lemma}
\newtheorem{cor}[thm]{Corollary}
\newtheorem{conj}{Conjecture}

\theoremstyle{definition}
\newtheorem{defn}{Definition}
\theoremstyle{remark}
\newtheorem{remark}{Remark}


\topmargin 15pt
\advance \topmargin by -\headheight
\advance \topmargin by -\headsep
\textheight 8.6in
\oddsidemargin 0pt
\evensidemargin \oddsidemargin
\marginparwidth 0.5in
\textwidth 6.5in

\def\sC{{\mathcal C}}
\def\C{{\mathbb C}}

\def\F{{\mathbb F}}
\def\cF{{\mathcal F}}

\def\G{{\cal G}}
\def\sG{{\mathcal G}}
\def\G{{\Gamma}}

\def\H{{\mathbb H}}
\def\Haus{\textrm{Haus}}
\def\HD{\mathcal{HD}}

\def\int{{\textrm{int \,}}}

\def\Isom{\textrm{Isom}}

\def\tmu{{\tilde \mu}}

\def\PSL{{\textrm{PSL}}}

\def\R{{\mathbb R}}
\def\vr{{\vec r}}

\def\supp{\textrm{support}}

\def\sT{{\mathcal T}}

\def\chix{{\raise.5ex\hbox{$\chi$}}}

\def\vv{{\vec v}}

\def\vw{{\vec w}}

\def\tY{{\tilde Y}}

\def\Z{{\mathbb Z}}

\begin{document}

\title{Free Groups in Lattices}
\author{Lewis Bowen}
\begin{abstract}
Let $G$ be any locally compact unimodular metrizable group. The main result of this
paper, roughly stated, is that if $F<G$ is any finitely generated free group and $\Gamma
< G$ any lattice, then up to a small perturbation and passing to a finite index subgroup,
$F$ is a subgroup of $\Gamma$. If $G/\Gamma$ is noncompact then we require additional
hypotheses that include $G=SO(n,1)$.

\end{abstract}
\maketitle
\noindent {\bf MSC}: 20E07, 20F65, 20F67, 22D40, 20E05.\\
{\bf Keywords}: free group, surface group, Kleinian group, limit set.
\noindent


\section{Introduction}

Consider the following general problem. Let $\Gamma$ be a discrete cocompact subgroup of a locally compact metrizable unimodular group $G$. Can we use information about the subgroups of $G$ to infer the existence of subgroups of $\Gamma$ satisfying prescribed properties? 

For example, suppose $G=PSL_2(\C)$, the group of orientation-preserving isometries of hyperbolic $3$-space. $G$ contains a large variety of surface subgroups; that is, subgroups that are each isomorphic to the fundamental group of a closed surface of genus at least 2. It is a well-known open problem whether $\Gamma$, an arbitrary discrete cocompact subgroup of $G$, contains a surface subgroup. 

This paper investigates the following strategy: given a subgroup $F<G$, attempt to change $F$ in some small way so that the resulting subgroup $F'$ lies in $\Gamma$ and retains important properties of $F$. For example, we would like $F'$ to be isomorphic to a finite-index subgroup of $F$ and the embedding $F'<G$ to have asymptotic geometric properties close to those of $F$.

To be precise, let $F$ be an abstract group and $\phi:F \to G$ a homomorphism. Let $S\subset F$ be a finite symmetric generating set. Let $d$ be a left-invariant proper metric on $G$ inducing its topology. For $\epsilon>0$, we say that a map $\phi_\epsilon:F \to G$ is an {\it $\epsilon$-perturbation} of $F$ if 
$$d( \phi_\epsilon(fs), \phi_\epsilon(f)\phi(s))  \le \epsilon$$
for all $f \in F$ and $s \in S$. $\phi_\epsilon$ need not be a homomorphism. Indeed, we do not even require that it maps the identity element to the identity element.

For example, if $G=\R$, $F=\Z$ and $\phi:\Z \to \R$ is the homomorphism $\phi(n)=n\tau$ for some number $\tau >0$ then $\phi_\epsilon:\Z \to \R$ need only satisfy $|\phi_\epsilon(n+1) - \phi_\epsilon(n) - \tau| \le \epsilon$ for all $n\in \Z$. 

We say that $\phi_\epsilon: F\to G$ is {\it virtually a homomorphism} if there exists a finite
index subgroup $F'<F$ such that
$$\phi_\epsilon(f_1f_2)=\phi_\epsilon(f_1)\phi_\epsilon(f_2) \hspace{0.4in} \forall f_1 \in F', \, f_2 \in F.$$
If, in addition, $\phi_\epsilon(F') <\Gamma$ then we say it is {\it virtually a homomorphism into $\Gamma$}.

\begin{thm}[Main theorem: Uniform case]\label{thm:uniform}
Let $G, \Gamma, d, F, S$ and $\phi$ be as above. Suppose $F$ is free and $S$ is a symmetric free generating set for $F$. Then for every $\epsilon>0$ there exists an $\epsilon$-perturbation $\phi_\epsilon$ of $\phi$ that is virtually a homomorphism into $\Gamma$.
\end{thm}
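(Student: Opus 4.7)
The strategy is to produce a finite-index normal subgroup $F'\triangleleft F$ and a genuine homomorphism $h\colon F'\to\Gamma$ that is close to $\phi$ on a Reidemeister--Schreier free basis of $F'$, then extend by coset representatives. Fix a transversal $T\subset F$ for $F/F'$ with $1\in T$; every $f\in F$ has a unique decomposition $f=f''t$ with $f''\in F'$, $t\in T$. Define
\[
\phi_\epsilon(f)\;:=\;h(f'')\,\phi(t).
\]
Normality of $F'$ forces $\phi_\epsilon(f_1f_2)=\phi_\epsilon(f_1)\phi_\epsilon(f_2)$ for $f_1\in F'$, $f_2\in F$, so $\phi_\epsilon|_{F'}=h$ lands in $\Gamma$ and $\phi_\epsilon$ is virtually a homomorphism into $\Gamma$.

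The main work is producing $F'$ so that every Reidemeister--Schreier generator $\alpha_{t,s}:=ts\tau_{t,s}^{-1}\in F'$ (where $\tau_{t,s}\in T$ is the representative of $tsF'$) has $\phi(\alpha_{t,s})$ within a small prescribed distance $\eta$ of $\Gamma$. I would do this via a finite, approximately equivariant discretisation of the right action of $F$ on $\Gamma\backslash G$. Since $G$ is unimodular, each $\phi(s)$ preserves the finite Haar measure on the compact space $\Gamma\backslash G$; choose $\delta\ll\eta$ and a finite $\delta$-separated, $\delta$-dense set $X\subset \Gamma\backslash G$ containing the identity coset $x_0$. For each $s\in S$ apply Hall's marriage theorem to the bipartite graph on $X\sqcup X$ with edge $(x,y)$ whenever $y$ lies in a fixed $C\delta$-neighbourhood of $x\cdot\phi(s)$, obtaining a bijection $\sigma_s\colon X\to X$ that tracks the $\phi(s)$-action; symmetrise so that $\sigma_{s^{-1}}=\sigma_s^{-1}$. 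Let $\psi\colon F\to Q\subset \mathrm{Sym}(X)$ be the homomorphism with $\psi(s)=\sigma_s$, set $F':=\ker\psi$, and choose $T$ so that every $q\in Q$ is realised by a word of length at most the Cayley diameter $D$ of $Q$.

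Induction on word length shows that for any word $w$ in the generators of length $\ell$ one has $d(\psi(w)(x_0),\,x_0\cdot\phi(w))\le \ell\cdot C\delta$ in the quotient metric on $\Gamma\backslash G$. Applied to $\alpha_{t,s}$, which has length at most $2D+1$ and satisfies $\psi(\alpha_{t,s})=\mathrm{id}_X$, this gives $d(x_0,x_0\cdot\phi(\alpha_{t,s}))<\eta$ provided $\delta$ is small enough; equivalently, some $\gamma_{t,s}\in\Gamma$ satisfies $d(\phi(\alpha_{t,s}),\gamma_{t,s})<\eta$. By the Nielsen--Schreier theorem the non-trivial $\alpha_{t,s}$ are a free basis of $F'$, so the assignment $\alpha_{t,s}\mapsto\gamma_{t,s}$ extends uniquely to a homomorphism $h\colon F'\to\Gamma$.

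A short calculation using left-invariance of $d$ and the coset decomposition reduces $d(\phi_\epsilon(fs),\phi_\epsilon(f)\phi(s))$ to $d\bigl(\phi(\tau_{t,s})^{-1}u\,\phi(\tau_{t,s}),e\bigr)$ with $u:=\phi(\alpha_{t,s})^{-1}h(\alpha_{t,s})$ satisfying $d(u,e)<\eta$. Since $T$ is finite and conjugation by any fixed element of $G$ is continuous at $e$, choosing $\eta$ small enough makes this quantity $<\epsilon$ uniformly in $t,s$. The main obstacle lies in reconciling two opposing demands of the discretisation: the bijections $\sigma_s$ must approximate the $\phi(s)$-action finely ($\delta$ small, hence $|X|$ and $|Q|$ large), yet the Cayley diameter $D$ of $Q$ -- which controls the length $\ell$ of the Schreier generators and therefore the cumulative error $\ell\cdot C\delta$ -- must remain small enough that the final estimate still falls below $\eta$.
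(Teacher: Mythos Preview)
Your proposal has a real gap, and you correctly flag it in the final paragraph: there is no mechanism to bound the Cayley diameter $D$ of $Q$ against the mesh $\delta$. As $\delta\to 0$ the set $X$ (and with it $|Q|$ and typically $D$) grows without a priori control, so $(2D+1)C\delta<\eta$ cannot be arranged. There is a further circularity you do not mention: the final bound $d(\phi(\tau)^{-1}u\,\phi(\tau),e)<\epsilon$ requires $\eta$ to be small relative to the modulus of continuity of conjugation by the elements of $\phi(T)$, but $T$ consists of words of length up to $D$, so $\phi(T)$ ranges over a compact set whose size depends on $D$, hence on $\delta$, hence on $\eta$ itself. And your linear accumulation bound $d(\psi(w)(x_0),x_0\cdot\phi(w))\le \ell\cdot C\delta$ tacitly assumes that right multiplication by $\phi(s)$ is non-expanding on $\Gamma\backslash G$; for a merely left-invariant $d$ this fails (right multiplication acts by conjugation on the argument of $d(\,\cdot\,,e)$), so the error in fact grows exponentially in $\ell$, making the diameter problem worse still.

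The paper sidesteps all of this by defining the perturbation \emph{edge by edge} rather than through a coset decomposition. One partitions $\Gamma\backslash G$ into small Borel pieces $v_1,\dots,v_n$ with chosen basepoints, and for each labelled edge $e=(v,w;s)$ of the resulting constraint graph fixes once and for all an element $\psi_e\in G$ carrying the basepoint of $v$ to that of $w$ with $d(\psi_e,\phi(s))<\epsilon$. For a point $x$ of the associated graph subshift one then sets $\phi_x(t_1\cdots t_m)=\psi_{e_1}\cdots\psi_{e_m}$ along the path in $\sG$ determined by $x$, so that $\phi_x(fs)=\phi_x(f)\psi_e$ for the relevant edge; left invariance alone gives $d(\phi_x(fs),\phi_x(f)\phi(s))=d(\psi_e,\phi(s))<\epsilon$, with no accumulation and no dependence on the size or diameter of any finite $F$-set. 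The required periodic point $x$ (equivalently, a finite $F$-set with a marked orbit) is produced from the pushforward of Haar measure via a weight-and-rationalisation argument on the constraint graph, which plays the role your Hall-matching step was meant to play but needs no diameter control whatsoever.
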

I do not know if the theorem remains true if $F$ is required to be a surface group instead.

\begin{thm}[Main theorem: Nonuniform Case]\label{thm:nonuniform}
Let $G=SO(n,1)$, $\Gamma< G$ be a discrete group with finite covolume, $F < G$ a convex cocompact free group, $S$ a symmetric free generating set for $F$ and $\epsilon>0$. Then there exists an $\epsilon$-perturbation $\phi_\epsilon$ of the inclusion map $\phi:F\to G$ that is virtually a homomorphism into $\Gamma$.
\end{thm}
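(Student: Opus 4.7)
The plan is to follow the closing-lemma-style strategy that proves Theorem~\ref{thm:uniform}, with one extra preparatory step that uses convex cocompactness of $F$ to confine all relevant orbits to the $\mu$-thick part of $\Gamma\backslash\H^n$, thereby reducing the non-uniform case locally to the uniform one. Fix a normal finite-index subgroup $F'\triangleleft F$, a set $T\subset F$ of left coset representatives of $F'$ containing $e$, and the associated Nielsen--Schreier free generating set $S'\subset F'$, whose elements are $S$-words of length at most some constant $R=R([F:F'])$. As in the uniform case, the target $\phi_\epsilon$ is specified by a homomorphism $\rho:F'\to\Gamma$ with $\rho(w)$ sufficiently close to $\phi(w)$ for every $w\in S'$, together with values $\phi_\epsilon(t)\in G$ for each $t\in T$. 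The $\epsilon$-perturbation and virtual-homomorphism conditions then become a finite system of inequalities in the local errors $\rho(w)\phi(w)^{-1}$ and $\phi_\epsilon(t)\phi(t)^{-1}$, solvable once these errors are small enough in terms of $\epsilon$ and $R$.

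To produce such a $\rho$ despite the cusps, fix a basepoint $p\in\Hull(\Lambda(F))$. Convex cocompactness of $F$ makes $\Hull(\Lambda(F))/F$ compact, so the finite set $W:=\{\phi(w)p:|w|_S\le 2R\}\subset\H^n$ stays within bounded distance of a compact $F$-fundamental domain. Let $M:=\Gamma\backslash\H^n$ and let $M_{\ge\mu}$ denote its $\mu$-thick part; since $\Gamma$ has finite covolume, $\text{vol}(M\setminus M_{\ge\mu})\to 0$ as $\mu\to 0$. A Fubini argument using the finiteness of $W$ and of $\text{vol}(M)$ produces, for $\mu$ small enough, an element $g\in G$ such that every point of $gW$ projects into $M_{\ge\mu}$: the ``bad'' set is a union of $|W|$ translates of the preimage of the thin part, which has negligible relative measure. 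With such a $g$ fixed, the closing-lemma / simultaneous-recurrence argument from the proof of Theorem~\ref{thm:uniform} applies on the compact thick part to yield, for any preassigned $\delta>0$, elements $\gamma_w\in\Gamma$ with $d(g\phi(w)g^{-1},\gamma_w)<\delta$ simultaneously for all $w\in S'$. Taking $\delta$ small enough in terms of $\epsilon$ and $R$ and setting $\rho(w):=\gamma_w$, one assembles $\phi_\epsilon$ as in the uniform case.

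The main obstacle is the closing-lemma step: in the uniform case the required positive-measure intersection in $G/\Gamma$ follows directly from compactness, whereas in the non-uniform case arbitrary $\phi(F)$-orbits in $G/\Gamma$ can escape into the cusps, where the injectivity radius vanishes and no element of $\Gamma$ lies close by. The preparatory step above is what rules this out: convex cocompactness of $F$ is precisely what makes it possible to choose the conjugator $g$ so that the entire relevant portion of the $\phi(F)$-orbit lies in the thick part, where the uniform-case reasoning still applies. The hypothesis $G=SO(n,1)$ enters the proof only through the Margulis thick--thin decomposition of the finite-volume hyperbolic manifold $M$, so the same argument should extend to any rank-one simple Lie group with a similar cusp structure.
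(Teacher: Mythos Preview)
Your proposal has a genuine gap that stems from a misreading of how Theorem~\ref{thm:uniform} is proved. That proof does not fix a finite-index subgroup $F'$ in advance and then run a closing lemma: it embeds the $\F$-action on $\Gamma\backslash G$ into a graph subshift over a finite alphabet, pushes Haar measure forward to a shift-invariant probability measure, and invokes Theorem~\ref{thm:core} to obtain a periodic treequence. The finite-index subgroup is the symmetry group of that treequence---an output of the argument, not an input. Your scheme of fixing $F'$ first, then choosing $g$ so that finitely many points lie in the thick part, and finally seeking $\gamma_w\in\Gamma$ with $d(g\phi(w)g^{-1},\gamma_w)<\delta$, cannot be extracted from the uniform-case proof. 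No mechanism is supplied for that last step: $\Gamma$ is discrete, so for a generic $g$ the conjugates $g\phi(w)g^{-1}$ are not close to any lattice element, and your freedom in choosing $g$ has already been spent on the thick-part condition.

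The related gap is that your Fubini step only places a \emph{finite} set $W$ of orbit points in the thick part, whereas the symbolic-dynamics engine needs control over the entire $\F$-orbit in $\Gamma\backslash G$. The paper's actual route is to allow excursions into the cusps but to show, using convex cocompactness of $F$ (so that $L(\phi(\F))$ has Lebesgue measure zero in $\partial\H^n$) together with countability of the cusp set, that for Haar-almost-every $\Gamma g$ each $S$-connected component of the set of $f\in\F$ for which $\Gamma g\phi(f)K$ lies in the thin part is finite (the key lemma of \S\ref{sec:nonuniform}). This is precisely the hypothesis of the infinite-alphabet periodic-point theorem (Theorem~\ref{thm:core2}), which then reduces to Lemma~\ref{lem:core} by a convex-geometric argument that replaces the infinite family of thin-part ``excursion types'' by finitely many. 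The Margulis decomposition is used here, as you guessed, but to bound excursions rather than to avoid the thin part altogether.
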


\subsection{Asymptotic Geometry}\label{subsection:about}

Next we make a precise claim to the effect that, under special negative-curvature conditions, asymptotic geometric properties do not change much under an $\epsilon$-perturbation. The terms used below are standard. For the reader's convenience, they are listed in subsection \ref{sub:defns}.

Let $(X,d)$ be a proper Gromov-hyperbolic space. Let $\partial X$ denote the Gromov boundary of $X$. Given a subset $Y \subset X \cup \partial X$, let $L(Y)=\partial X \cap \overline{Y}$ where $\overline{Y}$ denotes the closure of $Y$ in $X \cup \partial X$. If $\phi_\epsilon:H \to \Isom(X)$ is a map (where $H$ is an abstract group), then define the limit set of $\phi_\epsilon$ by $L(\phi_\epsilon)=L(\phi_\epsilon(H)p)$ where $p\in X$ is any point. $L(\phi_\epsilon)$ does not depend on the choice of $p$.

Fix a visual metric $d_\partial$ on $X$ with respect to some point $p\in X$. Let $d_{\Haus}$ denote the Hausdorff distance on closed subsets of $\partial X$ with respect to $d_\partial$. Let $\HD$ denote the Hausdorff dimension of subsets of $\partial X$ with respect to $d_\partial$.



\begin{thm}\label{thm:asymptotic}
Let $H$ be an abstract group with finite symmetric generating set $S$ and $\phi:H \to \Isom(X)$ be an injective homomorphism onto a quasi-convex cocompact subgroup of the group of isometries of $X$. Let $d_{\Isom(X)}$ be a left-invariant metric on $\Isom(X)$ inducing the topology of uniform convergence on compact sets. Then for all $C>0$, there exists an $\epsilon_0 >0$ such that if $0 \le \epsilon \le \epsilon_0$ and $\phi_\epsilon:H \to G$ is an $\epsilon$-perturbation of $\phi$ then
\begin{enumerate}
\item $\phi_\epsilon$ is injective,
\item $d_{\Haus}\Big(L(\phi_\epsilon), L(\phi)\Big) \le C$ and
\item if $\phi_\epsilon$ is virtually a homomorphism then $\Big| \HD(L(\phi_\epsilon) - \HD(L(\phi)) \Big| \le C$.
\end{enumerate}

 \end{thm}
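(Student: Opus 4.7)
The plan is to derive all three conclusions starting from a single telescoping estimate. Using left-invariance of $d_{\Isom(X)}$, induction on word length gives
\[
d_{\Isom(X)}\bigl(\phi_\epsilon(g),\ \phi_\epsilon(e)\phi(g)\bigr)\ \le\ |g|_S\,\epsilon
\]
for every $g\in H$. Evaluating at $p\in X$ and using that $d_{\Isom(X)}$ induces the topology of uniform convergence on compact sets yields $d_X(\phi_\epsilon(g)p,\phi_\epsilon(e)\phi(g)p)\le\eta(|g|_S\epsilon)$ for some modulus $\eta$ with $\eta(0)=0$. Thus the perturbed orbit $\phi_\epsilon(H)p$ shadows the translated orbit $\phi_\epsilon(e)\phi(H)p$ with an error controlled on balls of bounded $H$-radius.

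For parts \textbf{(1)} and \textbf{(2)} I would upgrade this to show that the perturbed orbit map $\Phi_\epsilon\colon h\mapsto\phi_\epsilon(h)p$ is still a quasi-isometric embedding of the Cayley graph of $H$ into $X$, with constants close to those of $\Phi\colon h\mapsto\phi(h)p$. Reparametrising the telescoping argument to start from $g$ rather than $e$ gives
\[
d_X\bigl(\Phi_\epsilon(g),\Phi_\epsilon(gh)\bigr)\ \ge\ d_X(p,\phi(h)p) - \eta(|h|_S\epsilon)\ \ge\ K^{-1}|h|_S - C - \eta(|h|_S\epsilon),
\]
so the lower bound depends only on $|h|_S$, not on $|g|_S$. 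Together with the quasi-convex cocompact hypothesis on $\phi$ this yields a $(K',C')$-quasi-isometric embedding for $\Phi_\epsilon$ with $(K',C')\to(K,C)$ as $\epsilon\to 0$. Injectivity of $\phi_\epsilon$ itself follows from the discreteness of $\phi(H)$ in $\Isom(X)$ combined with the same reparametrised estimate in $d_{\Isom(X)}$.

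For the Hausdorff bound on limit sets I would invoke the Morse lemma: any bi-infinite Cayley geodesic of $H$ maps under $\Phi$ and $\Phi_\epsilon$ to quasi-geodesics with nearly equal constants whose steps agree to within $\eta(\epsilon)$, so in the $\delta$-hyperbolic space $X$ they fellow travel within a uniform constant tending to $0$ with $\epsilon$. Converting via Gromov products and the visual metric $d_\partial$ gives $d_\Haus(L(\phi_\epsilon),L(\phi))\le C$ once $\epsilon\le\epsilon_0(C)$, using implicitly the normalisation that $\phi_\epsilon(e)$ is close to the identity of $\Isom(X)$ (otherwise the two limit sets differ by the action of $\phi_\epsilon(e)$ on $\partial X$).

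For \textbf{(3)} the virtual-homomorphism hypothesis lets me pass to a finite-index subgroup $H'\le H$ on which $\phi_\epsilon$ restricts to a genuine homomorphism, making $\phi_\epsilon(H')$ a discrete subgroup of $\Isom(X)$ which by the previous paragraph is convex cocompact and a small deformation of $\phi(H')$. Since limit sets are invariant under passage to finite-index subgroups, $L(\phi_\epsilon)=L(\phi_\epsilon|_{H'})$, and I would conclude by invoking the continuous dependence of the Hausdorff dimension of the limit set under convex-cocompact deformations---classical for Kleinian groups via Sullivan's work and extended to hyperbolic group actions on Gromov-hyperbolic spaces by Coornaert and Paulin. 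The principal technical obstacle throughout is the linear-in-word-length accumulation in the telescoping estimate, which is unbounded as $|g|_S\to\infty$ for fixed $\epsilon$; it is tamed by the quasi-convex cocompact hypothesis, which makes the $X$-distance between orbit points grow linearly in word length at a rate that dominates $\eta(|h|_S\epsilon)$ for small $\epsilon$, and by hyperbolicity via the Morse lemma, which converts pointwise closeness of the two orbit maps into bounded global fellow-travelling of the associated quasi-geodesic rays---ample for Hausdorff control at the boundary.
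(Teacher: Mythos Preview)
There is a genuine gap at the very first step. Left-invariance of $d_{\Isom(X)}$ means $d(ax,ay)=d(x,y)$; it does \emph{not} give $d(xa,ya)=d(x,y)$. In your inductive step you need to pass from a bound on $d\bigl(\phi_\epsilon(g),\phi_\epsilon(e)\phi(g)\bigr)$ to one on $d\bigl(\phi_\epsilon(g)\phi(s),\phi_\epsilon(e)\phi(g)\phi(s)\bigr)$, and that is a right multiplication, which left-invariance does not control. The same problem infects the ``reparametrised'' estimate: writing $h=s_1\cdots s_m$ one has $d_X\bigl(\Phi_\epsilon(g),\Phi_\epsilon(gh)\bigr)=d_X\bigl(p,\,s'_1\cdots s'_m\,p\bigr)$ with $d_{\Isom(X)}(s'_i,\phi(s_i))\le\epsilon$, and you want to compare this to $d_X\bigl(p,\phi(s_1)\cdots\phi(s_m)p\bigr)$. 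For fixed $m$ this difference tends to $0$ with $\epsilon$, but there is no uniform modulus $\eta$ with error $\le\eta(m\epsilon)$: already in $\PSL_2(\R)$ one can take $m$ copies of a loxodromic and perturb each by $\epsilon$ so that the product moves $p$ by an amount of order $e^{cm}\epsilon$. Hence for every fixed $\epsilon>0$ the accumulated error eventually swamps the linear orbit growth $K^{-1}|h|_S$, and your direct route to a global quasi-isometric embedding fails.

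The paper's proof is organised precisely around this difficulty. One first fixes a scale $N$ and a tolerance $\sigma$, then chooses $\epsilon_0$ so that for $d_S(g,h)\le N$ the two orbit maps differ by at most $\sigma$ (Lemma~\ref{lem:local}; this is your estimate, but only on bounded balls, with no claim about the dependence on $N$). This makes $h\mapsto\phi_\epsilon(h)p$ an $(N,\lambda+\sigma,c+\sigma)$-\emph{local} quasi-isometric embedding. The substantial step you are missing is a local-to-global principle for quasi-geodesics in $\delta$-hyperbolic spaces (Theorem~\ref{thm:localtoglobal} and Corollary~\ref{cor:localqi}): once $N$ is large relative to $\delta,\lambda,c$, any $(N,\lambda,c)$-local quasi-geodesic is a global $(\lambda',c')$-quasi-geodesic. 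This yields the global quasi-isometric embedding (Proposition~\ref{prop:qi}) and, after a refinement, a $\kappa$-bi-Lipschitz map $\phi(h)p\mapsto\phi_\epsilon(h)p$ with $\kappa\to1$ (Proposition~\ref{prop:bilip}). Items (1)--(3) are then read off from that bi-Lipschitz control; in particular for (3) the paper does not invoke a general deformation theorem but compares Poincar\'e series directly via the bi-Lipschitz bound and then applies Coornaert's identity $\delta_a=\HD(L)$ for quasi-convex cocompact groups.
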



\subsection{Applications}
Let $\H^n$ denote $n$-dimensional hyperbolic space. $\Isom^+(\H^n)$, the group of orientation-preserving isometries of $\H^n$ is identified with $SO(n,1)$. If $H$ is any subgroup of $\Isom^+(\H^n)$, let $D_{free}(H)$ denote the set
of all numbers $d$ such that $d=\HD(L(F))$, the Hausdorff dimension of the limit set of a free convex cocompact subgroup $F<H$.

\begin{thm}\label{thm:dense}
If $\Gamma$ is a lattice in $\Isom^+(\H^n)$ then $\overline{D_{free}(\Gamma)} = \overline{D_{free}(\Isom^+(\H^n))}$.
\end{thm}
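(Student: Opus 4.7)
The plan is that one inclusion, $\overline{D_{free}(\Gamma)} \subseteq \overline{D_{free}(\Isom^+(\H^n))}$, is immediate from $\Gamma < \Isom^+(\H^n)$, so only the reverse containment requires work. It suffices to show that each $d \in D_{free}(\Isom^+(\H^n))$ lies in $\overline{D_{free}(\Gamma)}$. Fix $\delta>0$ and write $d = \HD(L(F))$, where $F < G := \Isom^+(\H^n)$ is free and convex cocompact with symmetric free generating set $S$, and let $\phi : F \hookrightarrow G$ be the inclusion. The goal is to produce $d' \in D_{free}(\Gamma)$ with $|d - d'| < \delta$.

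First I would apply Theorem \ref{thm:asymptotic} with $C = \delta/2$ to $\phi$ to obtain a threshold $\epsilon_0 > 0$. Then I would invoke Theorem \ref{thm:uniform} (if $\Gamma$ is cocompact) or Theorem \ref{thm:nonuniform} (otherwise, which is legitimate because $G = SO(n,1)$ and $F$ is convex cocompact free) to produce an $\epsilon$-perturbation $\phi_\epsilon$ of $\phi$ with $\epsilon < \epsilon_0$ that is virtually a homomorphism into $\Gamma$. By definition there is a finite-index subgroup $F' < F$ such that $\phi_\epsilon|_{F'}$ is a genuine homomorphism into $\Gamma$; and by Theorem \ref{thm:asymptotic}(1), $\phi_\epsilon$ is injective, so $\phi_\epsilon(F') < \Gamma$ is a finitely generated free group.

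Next I would identify the limit sets. Using a finite left-coset decomposition $F = \bigsqcup_i F' g_i$ together with the identity $\phi_\epsilon(f' g_i) = \phi_\epsilon(f')\phi_\epsilon(g_i)$ for $f' \in F'$, the orbit $\phi_\epsilon(F)p$ decomposes as a finite union of $\phi_\epsilon(F')$-orbits, so $L(\phi_\epsilon) = L(\phi_\epsilon(F'))$. Combining with Theorem \ref{thm:asymptotic}(3) yields $|\HD(L(\phi_\epsilon(F'))) - d| < \delta/2$.

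The main obstacle is to verify that $\phi_\epsilon(F')$ is in fact convex cocompact in $\Isom^+(\H^n)$, not merely discrete and free; otherwise its Hausdorff dimension would not a priori qualify as an element of $D_{free}(\Gamma)$. I would invoke structural stability: $F' < F$ is itself convex cocompact (as a finite-index subgroup of a convex cocompact group), and convex cocompact faithful representations form an open subset of the representation variety $\mathrm{Hom}(F', SO(n,1))$. Since $\phi_\epsilon$ is $\epsilon$-close to $\phi$ on the generators $S$ of $F$, it is uniformly close to $\phi$ on any fixed finite set of words, in particular on a chosen generating set of $F'$; so for $\epsilon$ small enough $\phi_\epsilon|_{F'}$ lies in the open convex cocompact locus, making $\phi_\epsilon(F')$ convex cocompact. (One could alternatively cite the Hausdorff closeness of limit sets in Theorem \ref{thm:asymptotic}(2) together with standard quasi-convexity criteria.) Granting this, $d' := \HD(L(\phi_\epsilon(F'))) \in D_{free}(\Gamma)$ with $|d - d'| < \delta$, as required.
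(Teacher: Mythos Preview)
Your argument is correct and is exactly the unpacking of the paper's one-line proof (``This follows immediately from the above theorems''). One small circularity to flag in your convex-cocompactness step: you propose choosing $\epsilon$ small enough that $\phi_\epsilon|_{F'}$ lands in the open convex-cocompact locus of $\mathrm{Hom}(F',SO(n,1))$, but $F'$ itself depends on $\phi_\epsilon$ and hence on $\epsilon$, so you cannot fix a generating set of $F'$ in advance and then shrink $\epsilon$. The paper sidesteps this by proving (in the lemma just before Proposition~\ref{prop:HD}) that the orbit map $h\mapsto\phi_\epsilon(h)p$ is a $(\lambda',c')$-quasi-isometric embedding with constants \emph{independent of $\epsilon$ and of $F'$}; restricting to $F'$ then immediately gives a quasi-convex orbit with cocompact quotient, hence convex cocompactness, regardless of which $F'$ arises. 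Your parenthetical alternative (``standard quasi-convexity criteria'') is essentially this route and is the cleaner way to close the gap.
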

This follows immediately from the above theorems. 

\begin{remark}\label{remark:dimension}
It is easy and well-known that $D_{free}(\Isom^+(\H^2))=(0,1)$. From work of Thurston and others on
geometrically infinite free groups it can be proven that $D_{free}(\Isom^+(\H^3))=(0,2)$. It is not
known whether these results extend to $\Isom^+(\H^n)$ for $n \ge 4$.
\end{remark}

A {\it surface group} is a group isomorphic to the fundamental group of a closed surface
of genus at least 2.

\begin{remark}
Let $D_s(\Gamma)$ denote the set of Hausdorff dimensions of limit sets of surface subgroups of $\Gamma$. In general, $D_s(\Gamma)$ is very mysterious. For example, in case $\Gamma<SO(3,1)$ is a uniform lattice, it is not known whether $\overline{D_s(\Gamma)}=[1,2]$ for any $\Gamma$ or for all $\Gamma$ (even for well-studied lattices like the fundamental group of the figure-eight knot complement). It is even unknown whether $\overline{D_s(\Gamma)}$ contains an interval for any such $\Gamma$ or for all $\Gamma$.
\end{remark}

Let $P$ be a right-angled, compact Coxeter polyhedron in
$\mathbb{H}^3$,
and let $\G(P) \subset \Isom(\mathbb{H}^3)$ be the group
generated by reflections in the faces
of $P$.

 \begin{thm}[Ma07]
 Every finitely generated free subgroup of $\G(P)$ is contained in a subgroup $Q<\G(P)$ isomorphic to the fundamental group of a closed orbifold of negative Euler characteristic.
\end{thm}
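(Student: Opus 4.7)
My plan is to construct $Q$ as a reflection subgroup of $\G(P)$ that stabilizes a totally geodesic $2$-plane in $\H^3$. The right-angled hypothesis guarantees that the $\G(P)$-stabilizer of each face-plane of $P$ is a cocompact $2$-dimensional right-angled Coxeter group, which is the fundamental group of a closed hyperbolic $2$-orbifold of negative Euler characteristic. Write $\mathcal{H}$ for the family of all $\G(P)$-translates of the face-planes of $P$. The goal is to find $\Pi \in \mathcal{H}$ (or a $2$-plane preserved by a reflection subgroup of $\G(P)$) whose stabilizer $Q$ contains a conjugate of $F$.

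The first reduction is to assume $F$ is convex cocompact: $\G(P)$ has no cusps, so by tameness one may pass to a finite-index subgroup in which $F$ is convex cocompact with convex core $C = \Hull(L(F)) \subset \H^3$. Next, I would exploit two features of the right-angled setting. First, by Haglund--Wise, $\G(P)$ is virtually compact special, so quasi-convex subgroups are separable (LERF). Second, the right-angle hypothesis forces any two mirrors in $\mathcal{H}$ to be either orthogonal or disjoint, which permits combinatorial surgery on reflection words: any element written as a product of reflections in mirrors near $C$ should be rewritable as a product of reflections in the sub-collection perpendicular to a suitable $\Pi$, provided $\Pi$ is chosen compatibly. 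Combining separability with a pigeonhole argument over the finitely many $\mathcal{H}$-orbits meeting $C$ should produce such a $\Pi$, and the reflections perpendicular to $\Pi$ generate a $2$-orbifold subgroup acting on $\Pi$.

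The main obstacle is the combinatorial surgery step: producing $\Pi$ so that \emph{every} generator of $F$ can be simultaneously expressed as a product of reflections in mirrors perpendicular to $\Pi$. A free subgroup of $\G(P)$ is not in general conjugate into a single mirror stabilizer, so this requires moving to conjugates via LERF and a delicate choice of $\Pi$. A secondary issue is ensuring that the resulting $Q$ is closed (i.e., a cocompact $2$-orbifold group) and has negative Euler characteristic, which I would handle by a Gauss--Bonnet count on the mirrors of $\Pi$ participating in $Q$, together with an application of Theorem \ref{thm:asymptotic} to verify that the perturbed limit set of $F$ sits inside $L(Q)$ with the correct Hausdorff-dimension bound.
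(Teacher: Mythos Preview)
This theorem is not proved in the present paper at all: it is quoted from Masters' preprint [Ma07] and used as a black box in the corollary that follows. So there is no ``paper's own proof'' for me to compare your proposal against, and your sketch should be measured against Masters' argument rather than anything here.

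That said, your proposed strategy has a genuine obstruction. You are trying to produce a totally geodesic plane $\Pi\in\mathcal H$ whose $\G(P)$-stabilizer $Q$ contains (a conjugate of) $F$. Any such $Q$ is a Fuchsian group acting on $\Pi$, so its limit set $L(Q)$ lies in the boundary circle $\partial\Pi\subset\partial\H^3$. Hence $F<Q$ would force $L(F)\subset\partial\Pi$ and in particular $\HD(L(F))\le 1$. But the whole point of the application in this paper is to start from convex-cocompact free subgroups $F<\G(P)$ with $\HD(L(F))$ arbitrarily close to $2$; such $F$ have limit sets that are not contained in any round circle, so no conjugate of $F$ can sit inside the stabilizer of a single hyperplane. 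The ``combinatorial surgery'' and LERF steps cannot repair this: separability lets you pass to finite-index subgroups of $\G(P)$, but it does not move the limit set of $F$ into a circle. Your appeal to Theorem~\ref{thm:asymptotic} is also out of place---that theorem controls limit sets under $\epsilon$-perturbations of a homomorphism, which is not what is happening here.

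Masters' actual construction is of a different nature: rather than finding a pre-existing totally geodesic surface containing $F$, he builds an immersed $\pi_1$-injective $2$-orbifold in $\H^3/\G(P)$ by assembling pieces of the faces of $P$ (and their translates) so that the resulting orbifold group contains $F$. The surface produced is typically \emph{not} totally geodesic and its limit set is not a round circle, which is exactly what allows it to swallow a free group with large limit set.
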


The corollary below was pointed out by Joseph Masters.
\begin{cor}
For every $d<2$ there exists a surface subgroup $H_d < \G(P)$ such that the Hausdorff
dimension of the limit set of $H_d$ is at least $d$.
\end{cor}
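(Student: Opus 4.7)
The plan is to combine Theorem \ref{thm:dense}, Remark \ref{remark:dimension}, and the cited theorem of Ma07, then pass from orbifold groups to surface groups via Selberg's lemma.

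First, I would check that $\Gamma(P)$ contains a lattice in $\Isom^+(\H^3)$ to which Theorem \ref{thm:dense} applies. Since $P$ is a compact right-angled Coxeter polyhedron in $\H^3$, the reflection group $\Gamma(P)$ is a cocompact discrete subgroup of $\Isom(\H^3)$, and its orientation-preserving subgroup $\Gamma^+(P) := \Gamma(P) \cap \Isom^+(\H^3)$ is an index-two subgroup, hence a uniform lattice in $\Isom^+(\H^3)$.

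Next, given $d<2$, I would use Remark \ref{remark:dimension} to note $D_{free}(\Isom^+(\H^3))=(0,2)$, so by Theorem \ref{thm:dense} applied to $\Gamma^+(P)$, the closure $\overline{D_{free}(\Gamma^+(P))}$ contains $[0,2]$. In particular there exists a finitely generated free convex cocompact subgroup $F < \Gamma^+(P) < \Gamma(P)$ whose limit set has Hausdorff dimension at least $d$.

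Now I would apply the Ma07 theorem to this $F$: there is a subgroup $Q < \Gamma(P)$ containing $F$ and isomorphic to the fundamental group of a closed hyperbolic $2$-orbifold. Since $L(Q) \supseteq L(F)$, one has $\HD(L(Q)) \ge \HD(L(F)) \ge d$. Finally, by Selberg's lemma, $Q$ contains a torsion-free subgroup $H_d$ of finite index; a torsion-free finite-index subgroup of the fundamental group of a closed $2$-orbifold of negative Euler characteristic is the fundamental group of a closed surface of genus at least $2$, so $H_d$ is a surface group. Because $H_d$ has finite index in $Q$, its limit set equals $L(Q)$, and thus $\HD(L(H_d)) = \HD(L(Q)) \ge d$.

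The only step requiring care is the orbifold-to-surface passage, which is standard: Selberg's lemma provides the finite-index torsion-free subgroup, and the equality $L(H_d)=L(Q)$ follows from the fact that a subgroup of finite index in a Kleinian group has the same limit set. The rest is an essentially formal chain of inclusions and inequalities.
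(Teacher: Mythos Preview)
Your argument is correct and follows essentially the same route as the paper's proof: produce a free subgroup of large limit-set dimension via Theorem \ref{thm:dense} and Remark \ref{remark:dimension}, enlarge it to an orbifold group $Q$ via the Ma07 theorem, then pass to a finite-index surface subgroup with the same limit set. Your version is in fact slightly more careful than the paper's, since you explicitly pass to the orientation-preserving index-two subgroup $\Gamma^+(P)$ before invoking Theorem \ref{thm:dense} and you name Selberg's lemma where the paper merely says the existence of a finite-index surface subgroup is well known.
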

\begin{proof}
By remark \ref{remark:dimension} and theorem \ref{thm:dense} there exists a finitely generated free subgroup $F<\Gamma$ with $\HD(L(F)) \ge d$. By the theorem above, there exists a group $Q$ with $F<Q<\Gamma$ such that $Q$ is isomorphic to the fundamental group of a closed orbifold of negative Euler characteristic. Because $F<Q$, $\HD(L(Q))\ge d$. It is well-known that $Q$ contains a finite-index surface subgroup $Q'$. Because $Q'$ has finite index in $Q$, $\HD(L(Q'))=\HD(L(Q))\ge d$.
\end{proof}

The next application regards mapping class groups. Let $S$ denote an oriented closed
hyperbolic surface and $Mod(S)=\pi_0(Homeo^+(S))$ its group of orientation preserving
self-homeomorphisms up to isotopy. Let $D_{free}(S)$ denote the set of all numbers of the
form $\HD(L(F))$, where $F$ is a free convex compact subgroup of $Mod(S)$ and $\HD(L(F))$ is the
Hausdorff dimension of the limit set of $F$ on the boundary at infinity of Teichm\"uller space. The result below
was first observed by Chris Leininger.

\begin{thm}
$[0,1] \subset \overline{D_{free}(S)}$.
\end{thm}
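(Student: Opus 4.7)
The plan is to apply Theorem \ref{thm:nonuniform} with $G = SO(2,1) \cong \PSL_2(\R)$ and $\Gamma$ a non-uniform Veech lattice $\Lambda$ sitting inside $Mod(S)$, and then control the Hausdorff dimension via Theorem \ref{thm:asymptotic} together with a comparison between the boundary of a Teichm\"uller disk and the boundary of $\mathcal{T}(S)$.

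First I would fix a translation surface structure $(S,\omega)$ on a closed surface of genus at least $2$ whose Veech group $\Lambda < \PSL_2(\R)$ is a non-uniform lattice; Veech's regular double $2n$-gon surfaces furnish such examples. The Thurston-Veech construction identifies $\Lambda$ with a subgroup of $Mod(S)$, and the associated Teichm\"uller disk $\mathcal{D} \subset \mathcal{T}(S)$ is an isometric copy of $\H^2$ on which $\Lambda$ acts as a lattice. By work of Kent-Leininger, every free convex cocompact subgroup of $\Lambda$ in the $\PSL_2(\R)$ sense (i.e., a Schottky subgroup avoiding parabolics) is convex cocompact in $Mod(S)$ as well, and the inclusion $\partial \mathcal{D} \hookrightarrow \mathrm{PMF}$ is bi-H\"older on such limit sets, so that Hausdorff dimensions in $\partial \mathcal{D} = S^1$ and in $\mathrm{PMF}$ agree.

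Given $d \in (0,1)$ and $\eta > 0$, Remark \ref{remark:dimension} produces a free convex cocompact subgroup $F < \PSL_2(\R)$ with $|\HD(L(F)) - d| < \eta/2$. Applying Theorem \ref{thm:asymptotic} to $F$ acting on $\H^2$ yields a threshold $\epsilon_0 > 0$ such that any $\epsilon_0$-perturbation of the inclusion $F\hookrightarrow \PSL_2(\R)$ is injective and changes the Hausdorff dimension of the limit set by at most $\eta/2$. Theorem \ref{thm:nonuniform}, applied with $\epsilon < \epsilon_0$, then provides such a perturbation $\phi_\epsilon$ that is virtually a homomorphism into $\Lambda$: a finite-index subgroup $F' < F$ satisfies $\phi_\epsilon|_{F'} : F' \to \Lambda < Mod(S)$. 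Because $F'$ has finite index in $F$, $L(F') = L(F)$, and so the $\PSL_2(\R)$-limit set of $\phi_\epsilon(F')$ has Hausdorff dimension within $\eta$ of $d$. By the bi-H\"older comparison above this equals the Hausdorff dimension of its $Mod(S)$-limit set in $\mathrm{PMF}$, producing a point of $D_{free}(S)$ within $\eta$ of $d$. Since $d$ and $\eta$ were arbitrary, $(0,1) \subset \overline{D_{free}(S)}$, and closing up gives $[0,1] \subset \overline{D_{free}(S)}$.

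The main obstacle will be the dimension-comparison step: $\mathcal{T}(S)$ is not Gromov hyperbolic, and $D_{free}(S)$ is defined with respect to a visual-type metric on the boundary of $\mathcal{T}(S)$ that is not obviously compatible with the hyperbolic boundary metric on $\partial \mathcal{D} = S^1$. Theorem \ref{thm:asymptotic} controls Hausdorff dimensions only for actions on genuinely Gromov hyperbolic spaces, so one must separately invoke (or verify) the bi-H\"older comparison between $\partial \mathcal{D}$ and its image in $\mathrm{PMF}$ for convex cocompact subgroups of $Mod(S)$ coming from Veech disks; this is where the Farb-Mosher/Kent-Leininger convex cocompactness framework enters essentially.
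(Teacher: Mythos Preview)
Your overall strategy matches the paper's: locate a Veech lattice $\Lambda < Mod(S)$ stabilizing a Teichm\"uller disk, produce free convex cocompact subgroups of $\Lambda$ with prescribed limit-set dimension in $\partial\H^2$, and transfer the dimension to the Thurston boundary. You unpack Theorem~\ref{thm:dense} into its constituents (Theorems~\ref{thm:nonuniform} and~\ref{thm:asymptotic}), which is fine, and your care about convex cocompactness in $Mod(S)$ via Kent--Leininger is actually more explicit than the paper's treatment.

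The gap is exactly where you suspected, but it is a real gap as written: you claim that a bi-H\"older comparison between $\partial\mathcal{D}$ and its image in $\mathrm{PMF}$ forces Hausdorff dimensions to \emph{agree}. That implication is false. A bi-H\"older homeomorphism with H\"older exponents strictly less than $1$ only yields two-sided inequalities of the form $\alpha\cdot\HD(A)\le \HD(f(A))\le \beta^{-1}\cdot\HD(A)$, not equality. So a bi-H\"older estimate alone cannot carry the dimension computation across the two boundaries, and the argument as stated does not close.

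The paper avoids this entirely with a stronger and more elementary observation: the circle at infinity of the Teichm\"uller disk embeds \emph{piecewise projectively} in the Thurston boundary. Piecewise projective maps are locally bi-Lipschitz (indeed smooth) in the natural piecewise projective charts on $\mathrm{PMF}$, and bi-Lipschitz maps preserve Hausdorff dimension on the nose. With that in hand the proof is immediate from Theorem~\ref{thm:dense}; no Gromov hyperbolicity of Teichm\"uller space, no visual metric on $\partial\mathcal{T}(S)$, and no Farb--Mosher or Kent--Leininger machinery is needed for the dimension transfer.
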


\begin{remark}
It seems highly unlikely that $[0,1]=\overline{D_{free}(S)}$ but I do not know that this is
false.
\end{remark}

\begin{proof}
There exist disks (called Teichm\"uller disks) contained in Teichm\"uller space that are isometric with the hyperbolic plane $\H^2$. By [Ve89], there exists such a disk whose stabilizer in the mapping class group is a lattice in $\PSL_2(\R)$, the group of all (orientation-preserving) isometries of $\H^2$. The circle at infinity of the hyperbolic plane piecewise projectively embeds in the Thurston boundary of Teichm\"uller space. So Hausdorff dimension in the circle is the same as in Thurston's boundary with respect to the natural piecewise projective structure. This theorem now follows from theorem \ref{thm:dense}.
\end{proof}

Theorem \ref{thm:dense} plays a key role in the two recent papers, [LLR08] and [La08]. The first relates LERF with the Lubotzky-Sarnak conjecture and the second proves that Kleinian groups that contain noncyclic finite subgroups are either virtually free or contain a surface subgroup.

\subsection{Organization}

To prove theorem \ref{thm:uniform}, the action of $F$ on $\Gamma \backslash G$ is embedded into a symbolic dynamical system over $F$. A result in the symbolic dynamics over a free group implies that this larger system contains a periodic point. That point is used to construct the $\epsilon$-perturbation $\phi_\epsilon$. Thus, the core of the proof is a symbolic dynamics result. That result is stated and proven in \S \ref{sec:sd}. In \S \ref{sec:uniform}, theorem \ref{thm:uniform} is proven. 

A more general symbolic dynamics result is proven in section \ref{sec:sd2}. That result is used to prove theorem \ref{thm:nonuniform} in \S \ref{sec:nonuniform}. Section \ref{section:about} contains the proof of theorem \ref{thm:asymptotic}. In the last section, we explore attempts to replace free groups with surface groups in theorem 1. We prove that a continuous version of the symbolic dynamics result of \S \ref{sec:sd} does not hold. But we conjecture that with an additional hypothesis, the result of \S \ref{sec:sd} is true for surface groups. We show that this conjecture implies the surface subgroup conjecture.

{\bf Acknowledgements}.
I'd like to thank Joe Masters and Chris Leininger for the applications above.
Conversations with Chris Leininger were helpful in formulating the proof of theorem
\ref{thm:nonuniform}. I'm grateful to Marc Lackenby for pointing out several errors in a previous version and making helpful suggestions. I'd like to thank Alan Reid and Darren Long for useful conversations that have improved the paper. Last but not least, the referee has been most helpful by carefully going over the paper and making recommendations.

\section{Symbolic Dynamics}\label{sec:sd}
The core of the proof of theorem \ref{thm:uniform} is a result in the symbolic dynamics over a finitely generated free group $\F$. To describe it, let $S$ be a symmetric free generating set for $\F$. Let $\sG=(V,E)$ be a multi-graph. Assume that each edge is directed and has a label in $S$. We will associate to $\sG$ a dynamical system over $\F$.

Let $V^{\F}$ be the set of all functions $x:\F \to V$ with the topology of uniform convergence on finite sets. Let $X=X_\sG$ be the set of all $x\in X$ such that for all $f\in \F$ and $s\in S$, there is a directed edge in $\sG$ from $x(f)$ to $x(fs)$ labeled $s$. $X$ is called the {\it graph subshift determined by $\sG$}. $\sG$ is the {\it constraint graph} of $X$. 

To buttress the analogy with the classical symbolic dynamics over the integers, an element of $V^{\F}$ is called a {\it treequence}. For $g \in \F$, the {\it shift operator} $\sigma_g: V^{\F} \to V^{\F}$ is defined by $(\sigma_g x)(f)=x(g^{-1}f)$. This defines an action of $\F$ on $V^{\F}$. $X_\sG$ is closed and shift-invariant. Thus $\F$ acts on $X_\sG$.

The symmetry group of a treequence $x\in X$ is defined by $F_x=\{f\in \F~|~\sigma_fx=x\}$. If it has finite index in $\F$ then $x$ is said to be {\it periodic}. A measure $\mu$ on $X$ is {\it shift-invariant} if $\mu(\sigma_g E) = \mu(E)$ for all
$g\in {\F}$ and all measurable sets $E$. Equivalently, $\mu$ is {\it ${\F}$-invariant}.



\begin{thm}\label{thm:core}
Let $X \subset V^{\F}$ be the graph subshift defined by a {\em finite} graph $\sG$. If there exists a shift-invariant Borel probability measure $\mu$ on $X$, then there exists a periodic treequence $x \in X$. Moreover, if for some $v\in V$, $\mu\big(\{x \in X|x(id)=v\}\big)>0$ then there exists a periodic treequence $x\in X$ with $x(id)=v$. Here, $id$ denotes the identity element.
\end{thm}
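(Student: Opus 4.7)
My plan is to extract enough finite combinatorial data from $\mu$ to build a periodic treequence by hand. Concretely, a periodic treequence with $x(\mathrm{id})=v$ is equivalent to giving a finite label-respecting lift $\rho:\tilde V\to V$ of $\sG$---meaning a finite set $\tilde V$ with a projection $\rho$, bijections $\tilde f_s:\tilde V\to\tilde V$ ($s\in S$) with $\tilde f_{s^{-1}}=\tilde f_s^{-1}$ that send $\tilde u\in\rho^{-1}(u)$ into $\rho^{-1}(w)$ only when $\sG$ has an $s$-edge $u\to w$, together with a choice $\tilde v_0\in\rho^{-1}(v)$. For then $s\mapsto\tilde f_s$ extends to a homomorphism $\tilde f:\F\to\mathrm{Sym}(\tilde V)$ and $x(f):=\rho(\tilde f(f)\tilde v_0)$ is a treequence in $X_\sG$ with $x(\mathrm{id})=v$ whose stabilizer $F_x$ contains the finite-index subgroup $\ker\tilde f$. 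So the task reduces to producing such a lift from $\mu$.

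\textbf{Extract the one-neighborhood statistics and rationalize.} Let $\pi(u):=\mu(\{x:x(\mathrm{id})=u\})$ and $p_s(u,w):=\mu(\{x:x(\mathrm{id})=u,\,x(s)=w\})$. Shift invariance of $\mu$ gives (i) the marginal $\sum_w p_s(u,w)=\pi(u)$, (ii) the symmetry $p_s(u,w)=p_{s^{-1}}(w,u)$, obtained from the identity $\sigma_{s^{-1}}\{x(\mathrm{id})=u,\,x(s)=w\}=\{y(\mathrm{id})=w,\,y(s^{-1})=u\}$, and (iii) the support condition $p_s(u,w)=0$ whenever $\sG$ has no $s$-edge $u\to w$ (because $\mu$ is supported on $X_\sG$). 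These are all integer-linear constraints, so the set $\Delta$ of all $(\pi',p'_s)$ satisfying them is a rational polytope containing the given point $(\pi,p_s)$. Since the hypothesis gives $\pi(v)>0$, the relatively open subset $\Delta\cap\{\pi'(v)>0\}$ is nonempty and therefore contains a rational point $(\pi',p'_s)$.

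\textbf{Build the lift by bipartite matching.} Clear denominators: pick $N\in\N$ so that all $N\pi'(u)$ and $Np'_s(u,w)$ are non-negative integers. Set $\tilde V:=\bigsqcup_{u\in V}V_u$ with $|V_u|=N\pi'(u)$ and let $\rho:\tilde V\to V$ be the projection. Fix $S_+\subset S$ meeting each pair $\{s,s^{-1}\}$ once. For each $s\in S_+$, the marginal equalities $\sum_w Np'_s(u,w)=|V_u|$ and $\sum_u Np'_s(u,w)=|V_w|$ permit partitions $V_u=\bigsqcup_w A^s_{u,w}$ and $V_w=\bigsqcup_u B^s_{u,w}$ with $|A^s_{u,w}|=|B^s_{u,w}|=Np'_s(u,w)$; any family of bijections $A^s_{u,w}\to B^s_{u,w}$ glues into a bijection $\tilde f_s:\tilde V\to\tilde V$ which, by the support condition, only carries $V_u$ into $V_w$ when $\sG$ has an $s$-edge $u\to w$. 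Set $\tilde f_{s^{-1}}:=\tilde f_s^{-1}$; the symmetry $p'_s(u,w)=p'_{s^{-1}}(w,u)$ makes this mutually consistent with the $s^{-1}$-edges of $\sG$. Finally choose $\tilde v_0\in V_v$, which is nonempty since $N\pi'(v)\ge 1$, and apply the recipe of the first paragraph. The genuinely substantive step is the rational approximation while preserving $\pi'(v)>0$; everything else is Birkhoff--von Neumann-style bookkeeping. The first assertion of the theorem follows from the second applied to any $v$ with $\pi(v)>0$, which exists because $\sum_u\pi(u)=1$ and $V$ is finite.
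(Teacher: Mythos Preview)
Your proof is correct and follows essentially the same route as the paper's: the paper packages your $(\pi,p_s)$ as a ``weight'' $W_\mu$ on $\sG$, makes the identical rationalization argument (the weight equations are integer-linear, so a rational solution with positive $v$-coordinate exists), clears denominators, and builds the same finite set $K=\bigsqcup_v K(v)$ with $|K(v)|$ equal to the integer weight, together with bijections assembled from partitions of each $K(v)$---exactly your $\tilde V$ and $\tilde f_s$.

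One small slip worth fixing: with a \emph{left} homomorphism $\tilde f:\F\to\mathrm{Sym}(\tilde V)$ and $x(f)=\rho(\tilde f(f)\tilde v_0)$ you get $x(fs)=\rho\big(\tilde f(f)\,\tilde f_s\,\tilde v_0\big)$, and the lift property of $\tilde f_s$ gives an $s$-edge from $\rho(\tilde u)$ to $\rho(\tilde f_s\tilde u)$, not from $\rho(\tilde f(f)\tilde v_0)$ to $\rho(\tilde f(f)\tilde f_s\tilde v_0)$. The paper handles this by using a \emph{right} action: set $\tilde v_0\cdot s:=\tilde f_s(\tilde v_0)$, extend to $\F$, and put $x(f):=\rho(\tilde v_0\cdot f)$; then $x(fs)=\rho(\tilde f_s(\tilde v_0\cdot f))$ and the edge condition follows directly. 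Equivalently, make $\tilde f$ an anti-homomorphism, or define $x(f):=\rho(\tilde f(f^{-1})\tilde v_0)$ with $\tilde f(s):=\tilde f_{s^{-1}}$. This is purely a bookkeeping correction; the substance of your argument is unchanged.
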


\begin{remark}This theorem is the only place in the proof of theorem
\ref{thm:uniform} where
the fact that ${\F}$ is a free group is used. Theorem \ref{thm:core} is not true if ${\F}$
is replaced by the fundamental group of a closed surface of genus at least 2. However, it
is possible that under additional hypotheses on $X$, it remains true. See \S
\ref{section:surface} for further discussion.
\end{remark}

\begin{remark} If ${\F}$ has rank at least 2 then, because ${\F}$ is nonamenable, it is possible that there are no shift-invariant Borel probability measures on $X$. 
\end{remark}

The proof of theorem \ref{thm:core} given next is essentially the same as the proof given in \cite{Bo03}, where it was introduced. The idea comes from a simple observation: if $x:\F \to V$ is periodic, then it descends to a function $\bar{x}:F_x \backslash \F \to V$ by $\bar{x}(F_x f)=x(f)$. To prove theorem \ref{thm:core}, we will construct a periodic treequence by reversing this procedure. That is, we will construct a right action of $\F$ on a finite set $K$ and a function ${\bar x}:K \to V$ such that some ``lift'' of this function (defined by $x(f) = {\bar x}(k_1\cdot f)$ where $k_1 \in K$ is fixed), is a periodic treequence in $X$. This is accomplished through a study of weights on the constraint graph $\sG$ which are defined next.


Let $V=V(\sG), E(\sG)$ denote the vertex set and edge set of $\sG$ respectively. We write $(v,w;s)$ to denote the edge in $E(\sG)$ from $v$ to $w$ labeled $s$ (where $v,w \in V$, $s\in S$). A {\it weight} on $\sG$ is a function $W:V(\sG) \cup E(\sG) \to [0,\infty)$ such that 
\begin{eqnarray}\label{eqn:weight1}
\forall ~v\in V(\sG), s\in S, && W(v) = \sum_{w \in V} W(v,w;s) = \sum_{w \in V} W(w,v;s);\\
\forall ~v,w \in V(\sG), s\in S, && W(v,w;s)=W(w,v;s^{-1}).\label{eqn:weight2}
\end{eqnarray}
The first sum above is over all $w \in V$ such that $(v,w;s) \in E(\sG)$ and the second sum is over all $w\in V$ such that $(w,v;s) \in E(\sG)$. A weight is nontrivial if it is not identically zero.

\begin{lem}\label{lem:Wmu}
Let $\mu$ be a shift-invariant Borel probability measure on $X$.  Let
\begin{eqnarray*}
W_\mu(v) &=& \mu\big( \{ x\in X~|~ x(id) = v\} \big),\\
W_\mu(v,w;s) &=& \mu\big( \{ x\in X~|~ x(id) = v, ~x(s)=w\} \big).
\end{eqnarray*}
Then $W_\mu$ is a nontrivial weight on $\sG$.
\end{lem}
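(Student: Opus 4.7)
The proof should be a direct verification of the two weight axioms (\ref{eqn:weight1}) and (\ref{eqn:weight2}), together with nontriviality, using only countable additivity of $\mu$, shift-invariance, and the defining condition of $X_\sG$. I would organize it as follows.

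First, for each $v \in V$ and $s \in S$, the set $A_v := \{x \in X : x(id) = v\}$ is partitioned (up to a $\mu$-null set) by the sets $\{x \in X : x(id) = v,\, x(s) = w\}$ as $w$ ranges over $V$: the constraint defining $X_\sG$ forces $x(s)$ to be a vertex adjacent to $v$ via an $s$-labeled edge, so only those $w$ with $(v,w;s) \in E(\sG)$ give nonempty contributions. Countable additivity then yields $W_\mu(v) = \sum_{w} W_\mu(v,w;s)$, which is the first half of (\ref{eqn:weight1}).

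Next I would use shift-invariance for the second equality of (\ref{eqn:weight1}). Computing $(\sigma_s x)(id) = x(s^{-1} \cdot id) = x(s^{-1})$ shows that $\sigma_{s^{-1}} A_v = \{x \in X : x(s) = v\}$. By shift-invariance $\mu(A_v) = \mu(\{x : x(s) = v\})$, and partitioning the latter set by the value of $x(id)$ gives $W_\mu(v) = \sum_w W_\mu(w,v;s)$, again with only edges actually present in $\sG$ contributing. For (\ref{eqn:weight2}), the same shift argument applied to a joint event gives $\sigma_{s^{-1}}\{x : x(id)=v,\, x(s)=w\} = \{y : y(id)=w,\, y(s^{-1})=v\}$, so shift-invariance yields $W_\mu(v,w;s) = W_\mu(w,v;s^{-1})$.

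Finally, nontriviality is immediate: since $\{A_v\}_{v \in V}$ partitions $X$ and $V$ is finite, $\sum_{v \in V} W_\mu(v) = \mu(X) = 1$, so $W_\mu$ cannot be identically zero. There is no real obstacle here; the only thing to be slightly careful about is bookkeeping the direction of the shift $\sigma_g(x)(f) = x(g^{-1}f)$ so that the computations $\sigma_{s^{-1}}\{x(id)=v\} = \{x(s)=v\}$ and the analogous joint statement come out with the right labels.
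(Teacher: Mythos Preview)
Your proof is correct and follows essentially the same approach as the paper: additivity of $\mu$ for the partition identity in (\ref{eqn:weight1}), and shift-invariance via the computation $\sigma_{s^{-1}}\{x:x(id)=v,\,x(s)=w\}=\{y:y(id)=w,\,y(s^{-1})=v\}$ for (\ref{eqn:weight2}). Your version is in fact a bit more careful with the shift bookkeeping and with nontriviality than the paper's terse proof; the only cosmetic remark is that finiteness of $V$ is not needed for nontriviality (countable additivity already gives $\sum_v W_\mu(v)=1$), which matters later when the lemma is reused for infinite graphs.
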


\begin{proof}
The equation
$$W_\mu(v) = \sum_{w \in V} W_\mu(v,w;s) = \sum_{w \in V} W_\mu(w,v;s)$$
holds by additivity of $\mu$. The equation $W_\mu(v,w;s)=W_\mu(w,v;s^{-1})$ is true because $\mu$ is shift-invariant and
$\sigma_s\big( \{ x\in X~|~ x(id) = v, ~x(s)=w\} \big)=  \{ x\in X~|~ x(id) = w, ~x(s^{-1})=v\}.$ 
\end{proof}

\begin{lem}\label{lem:core}
Let $W:V(\sG) \cup E(\sG) \to [0,\infty)$ be a nontrivial weight. If, for some $v_1 \in V(\sG)$, $W(v_1)>0$, then there exists a periodic treequence $x\in X$ with $x(id)=v_1$.
\end{lem}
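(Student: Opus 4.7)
The plan is to construct a finite set $K$, a ``type'' map $\bar{x}: K \to V(\sG)$, and a right action of $\F$ on $K$ such that, for some chosen $k_1 \in K$, the formula $x(f) := \bar{x}(k_1 \cdot f)$ defines a periodic treequence in $X$ with $x(id) = v_1$. As a preliminary reduction, I would observe that (\ref{eqn:weight1}), (\ref{eqn:weight2}), together with $W \ge 0$, form a linear system with integer coefficients, so the set of weights is a rational polyhedral cone; since the given $W$ satisfies $W(v_1) > 0$, standard LP theory yields a rational (hence, after clearing denominators, integer-valued) weight $W'$ with $W'(v_1) \ge 1$. Replacing $W$ by $W'$, I may assume throughout that $W$ takes nonnegative integer values.

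Set $K := \{(v, i) : v \in V(\sG),\ 1 \le i \le W(v)\}$ and $\bar{x}(v, i) := v$. For each $s \in S$ and each $v \in V(\sG)$, equation (\ref{eqn:weight1}) produces two partitions of $\{1, \ldots, W(v)\}$: an ``out-$s$'' partition into parts $P^{v,s,w}_{\mathrm{out}}$ of size $W(v,w;s)$, one for each outgoing $s$-edge at $v$, and an ``in-$s$'' partition into parts $P^{v,s,w}_{\mathrm{in}}$ of size $W(w,v;s)$, one for each incoming $s$-edge at $v$. I would choose the partitions so that $P^{v,s,w}_{\mathrm{out}} = P^{v,s^{-1},w}_{\mathrm{in}}$ for all $v, w, s$; this is possible because the bin sizes match by (\ref{eqn:weight2}). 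For each edge $(v,w;s) \in E(\sG)$, select an arbitrary bijection $\beta^s_{v,w}: P^{v,s,w}_{\mathrm{out}} \to P^{w,s,v}_{\mathrm{in}}$ and impose the coherence $\beta^{s^{-1}}_{w,v} := (\beta^s_{v,w})^{-1}$. Defining $\pi_s(v,i) := (w, \beta^s_{v,w}(i))$ for $i \in P^{v,s,w}_{\mathrm{out}}$ yields a permutation $\pi_s$ of $K$ with $\pi_{s^{-1}} = \pi_s^{-1}$. Writing $S = T \sqcup T^{-1}$ with $T$ a free basis for $\F$, the universal property of the free group extends $t \mapsto \pi_t$ uniquely to a right action $\cdot : K \times \F \to K$.

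Finally, take $k_1 := (v_1, 1) \in K$ (available because $W(v_1) \ge 1$) and $x(f) := \bar{x}(k_1 \cdot f)$. For any $f \in \F$ and $s \in S$, writing $k_1 \cdot f = (v, i)$ gives $x(f) = v$, while $\pi_s(v,i) = (w, j)$ with $(v,w;s) \in E(\sG)$ by construction, so $x(fs) = w$ and the required edge is present; thus $x \in X$ and $x(id) = v_1$. For periodicity, the orbit $k_1 \cdot \F$ is contained in the finite set $K$, so $\textrm{Stab}(k_1) := \{g \in \F : k_1 \cdot g = k_1\}$ has index at most $|K|$ in $\F$. Any $g \in \textrm{Stab}(k_1)$ satisfies $x(gf) = \bar{x}((k_1 \cdot g)\cdot f) = x(f)$ for all $f$, and substituting $f \mapsto g^{-1}f$ yields $x(g^{-1}f) = x(f)$, i.e., $g \in F_x$; so $\textrm{Stab}(k_1) \subseteq F_x$ and $F_x$ has finite index. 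The step I expect to be most delicate is organizing the simultaneous compatibility of the in/out partitions for $s$ and $s^{-1}$ at every vertex, which is exactly the numerical content of (\ref{eqn:weight1})--(\ref{eqn:weight2}); it is precisely because $\F$ is free that the locally defined permutations $\pi_s$ extend to a global action of $\F$, which is why the same argument would fail for a surface group as remarked after Theorem \ref{thm:core}.
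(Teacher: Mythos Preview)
Your proposal is correct and follows essentially the same route as the paper: reduce to an integer-valued weight via the rationality of the cone cut out by (\ref{eqn:weight1})--(\ref{eqn:weight2}), build a finite set $K$ stratified by vertices with $|K(v)|=W(v)$, use the weight identities to choose compatible partitions and edge-bijections that assemble into permutations $\pi_s$ of $K$ with $\pi_{s^{-1}}=\pi_s^{-1}$, extend to a right $\F$-action by freeness, and set $x(f)=\bar{x}(k_1\cdot f)$ with $k_1$ in the $v_1$-fibre. Your bookkeeping of the in/out partitions and the $s\leftrightarrow s^{-1}$ coherence is in fact slightly more explicit than the paper's, but the argument is the same.
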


\begin{proof}
The weight equations (\ref{eqn:weight1}, \ref{eqn:weight2}) are linear equations. So the space of all weights on $\sG$ is the intersection of a certain linear subspace of $\R^{V(\sG) \cup E(\sG)}$ with the positive orthant. Because these equations have coefficients in $\Z$, the existence of the weight $W$ with $W(v_1)>0$ implies the existence of a rational weight $W'$ such that $W'(v_1)>0$. Rational means that $W'(v)$ and $W'(e)$ are rational numbers for every $v\in V(\sG)$ and $e\in E(\sG)$. In fact, we may assume that $W'(v)$ and $W'(e)$ are integers for every $v\in V(\sG)$ and $e \in E(\sG)$ since multiplying a weight by a positive scalar does not change the fact that it is a weight.

Since $S$ is a symmetric free generating set for $\F$, we may write $S=\{s_1,\ldots, s_r, s_1^{-1},\ldots, s_r^{-1}\}$. Let $S^+=\{s_1,\ldots,s_r\}$. So $\F=\langle s_1,\ldots, s_r\rangle$. 

By the above, we may assume that $W(v)$ and $W(e)$ are integers for every $v \in V(\sG)$ and $e\in E(\sG)$. For each $v\in V(\sG)$, let $K(v)$ be a set with $|K(v)| = W(v)$. For every $v\in V(\sG)$ and $s\in S^+$, choose a partition $\{K_+(v,w;s)\}_{w\in V}$ of $K(v)$ so that $|K_+(v,w;s)| = W(v,w;s)$. This is possible because of the weight equations (\ref{eqn:weight1}, \ref{eqn:weight2}). Similarly, choose a partition $\{K_-(v,w;s)\}_{w\in V}$ of $K(v)$ so that $|K_-(v,w;s)| = W(w,v;s^{-1})$. For each $v,w \in V$ and $s\in S^+$, choose a bijection $b_{v,w;s}: K_+(v,w;s) \to K_-(w,v;s)$. 

Let $K$ be the disjoint union $\bigsqcup_{v\in V} K(v)$. For $s\in S^+$, the bijections $\{b_{v,w;s}\}_{v,w \in V}$ form a permutation of $K$ as follows. For $k \in K(v)$, let $k\cdot s := b_{v,w;s}(k)$ where $w \in V$ is the unique element such that $k \in K_+(v,w;s)$. Since $S$ freely generates $\F$ as a group, this defines a right action of $\F$ on $K$. 



Let ${\bar x}:K\to V(\sG)$ be the function ${\bar x}(k)=v$ if $k\in K(v)$. Now we can choose a periodic treequence $x \in X$ as follows. Let $k_1 \in K_{v_1}$. For $f \in \F$, define $x(f) = \bar{x}(k_1\cdot f)$. Because $K$ is finite, the stabilizer $F_{k_1}:=\{ f\in \F~|~ k_1\cdot f=k_1\}$ has finite index in $\F$. Since $\sigma_f x=x$ for every $f\in F_{k_1}$, $x$ is periodic. Also $x(id)={\bar x}(k_1)=v_1$. Apriori, $x$ is only in $V^\F$. Let us check that $x \in X$. Let $f\in F, s\in S^+$. Let $l=k_1\cdot f$, $v=x(f)=\bar{x}(l)$ and $w=x(f\cdot s)=\bar{x}(l\cdot s)$. Then $b_{v,w;s}(l)=l\cdot s$. Thus $W(v,w;s)>0$ which implies $(v,w;s) \in E(\sG)$. So $x\in X$.
\end{proof}
Theorem \ref{thm:core} is an immediate consequence of the lemmas above.


\section{The Uniform Case}\label{sec:uniform}

In this section, theorem \ref{thm:uniform} is proven. So all the hypotheses of theorem \ref{thm:uniform} are assumed. Briefly, the proof goes as follows. ${\F}$
acts on $\Gamma \backslash G$ on the left by $f(\Gamma g ):=\Gamma g \phi(f^{-1})$. The space
$\Gamma \backslash G$ is partitioned into subsets of
small diameter. This partitioning is used to embed $\Gamma \backslash G$ into a graph subshift $X$. Haar measure on $\Gamma \backslash G$ pushes forward to a
shift-invariant measure on $X$. Theorem \ref{thm:core}
implies the existence of a periodic treequence $x \in X$. This treequence is ``decoded'' to produce the required $\epsilon$-perturbation. 


\subsection{The Graph Subshift}\label{subsection:graph}
We consider $\Gamma \backslash G$ with the quotient metric $\bar{d}$ defined by $\bar{d}(\Gamma g_1,\Gamma g_2) = \min_{\gamma \in \Gamma} d(\gamma g_1, g_2)$. Recall that $S$ is a symmetric free generating set for $\F$. Let $\delta>0$ be such that for all $g_1, g_2 \in G$ with $d(g_1, id)<\delta$ and $d(g_2, id)<\delta$, if $s\in S$ then
$$d\big(g_1\phi(s)g_2, \phi(s)\big) < \epsilon.$$
Let $V=\{v_1,v_2,\ldots, v_n\}$ be a Borel partition
of $\Gamma \backslash G$ into sets $v_i$ of diameter less than
$\delta$. Assume that each $v_i$ has positive Haar measure. Let $\sG$ be the graph with vertex set $V=\{v_1,\ldots, v_n\}$ and edges defined as follows. For each $v, w \in V$, if there exists elements $p \in v$, $q\in w$ and $s\in S$ such that $p \phi(s) = q$ then there is a directed edge in $\sG$ from $v$ to $w$ labeled $s$. There are no other edges.

Let $X \subset V^{\F}$ be the graph subshift determined by $\sG$.

\subsection{Perturbations from treequences}\label{perturbations}

We will choose, for each $x\in X$, an $\epsilon$-perturbation $\phi_x$ of $\phi$. To get started, choose a basepoint $p_i \in v_i$ for each $i$. Without loss of generality, assume $p_1 = \Gamma$.

 If there is an edge $e=(v,w)$ in $\sG$ labeled $s$ then there exists points $p \in v, q\in w$ such that $p \phi(s)=q$. Let $p_v, q_w$ be the basepoints of $v$ and $w$ respectively. Because $v$ and $w$ each have diameter at most $\delta$, there exists elements $g_v, g_w \in G$ such that $d(g_v, id)<\delta$, $d(g_w,id)<\delta$, $p_vg_v=p$ and $q_w g_w=q$. Let $\psi_e=g_v \phi(s) g_w^{-1}$. Note that $p_v \psi_e = p_v g_v \phi(s) g_w^{-1}=q_w$. By choice of $\delta$, $d(\psi_e, \phi(s)) < \epsilon$.

There is an edge $e'= (w,v)$ in $\sG$ labeled $s^{-1}$. Choose $\psi_{e'}$ so that $\psi_{e'} = \psi_e^{-1}$. 

Let $x \in X$. For $f \in \F$, represent $f$ as $f=t_1\cdots t_m$ for some $t_i \in S $. Let $t_0=id$. Let $\phi_x(f)=\psi_{e_1} \cdots \psi_{e_m}$ where $e_i$ is the edge from $x(t_0 \cdots t_{i-1})$ to $x(t_1 \cdots t_{i})$ labeled $t_i$. This is independent of the choice of representation of $f$ because of the choice for $\psi_{e'}$ above and because $\F$ is freely generated by $S$.

To show that $\phi_x$ is an $\epsilon$-perturbation of $\phi$ we will need the next lemma.

\begin{lem}\label{lem:start}
A map $\eta:\F \to G$ is an $\epsilon$-perturbation of $\phi$ if for any sequence $t_1, \ldots, t_m \in S$, there exist elements $t'_i \in G$ such that $d(\phi(t_i),t'_i)<\epsilon $ and $\eta(t_1\cdots t_i)=t'_1\cdots t'_i$ for all $i=1\ldots m$.
\end{lem}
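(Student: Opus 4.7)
The plan is to verify the defining inequality $d(\eta(fs), \eta(f)\phi(s)) \le \epsilon$ of an $\epsilon$-perturbation directly from the hypothesis, using left-invariance of $d$ to telescope everything down to a single letter. Given $f \in \F$ and $s \in S$, I would first represent $f$ as a word $f = t_1 \cdots t_m$ in the letters of $S$ (taking $m = 0$ if $f = id$), and then extend this sequence by setting $t_{m+1} := s$, so that $t_1 \cdots t_{m+1} = fs$.

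Next, I would apply the hypothesis to the length $(m+1)$ sequence $t_1, \ldots, t_{m+1}$ to obtain elements $t'_1, \ldots, t'_{m+1} \in G$ with $d(\phi(t_i), t'_i) < \epsilon$ for every $i$ and $\eta(t_1 \cdots t_i) = t'_1 \cdots t'_i$ for $i = 1, \ldots, m+1$. Reading off the values at $i = m$ and $i = m+1$ gives $\eta(f) = t'_1 \cdots t'_m$ and $\eta(fs) = t'_1 \cdots t'_m t'_{m+1}$, so by left-invariance of $d$,
$$d(\eta(fs),\,\eta(f)\phi(s)) \;=\; d(t'_1 \cdots t'_m t'_{m+1},\; t'_1 \cdots t'_m \phi(s)) \;=\; d(t'_{m+1},\, \phi(s)) \;<\; \epsilon,$$
which is exactly the required inequality. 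Since $f$ and $s$ were arbitrary, $\eta$ is an $\epsilon$-perturbation.

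The only subtle point is the degenerate case $f = id$, for which $m = 0$ and the hypothesis (as stated for $i = 1, \ldots, m$) says nothing about $\eta(id)$. I would resolve this by adopting the convention that the empty product in $G$ equals the identity, so that $\eta(id) = id$; this matches the application in subsection \ref{perturbations}, where $\phi_x(id)$ is literally the empty product of the $\psi_e$'s. With that bookkeeping handled, the telescoping calculation above is uniform in $f$, and the main difficulty is really just organizing the indices rather than any genuine obstacle.
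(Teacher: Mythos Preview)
Your proof is correct and essentially identical to the paper's: both write $f=t_1\cdots t_m$, append $t_{m+1}=s$, apply the hypothesis to the extended sequence, and use left-invariance of $d$ to cancel the common prefix $t'_1\cdots t'_m$. Your additional remark on the $f=id$ case (handled via the empty-product convention so that $\eta(id)=id$) is a nice touch that the paper leaves implicit.
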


\begin{proof}
Let $f \in \F$ and $s \in S$. Then there exists a sequence $t_1,\ldots, t_m \in S $ such that $f=t_1\cdots t_m$. Let $t_{m+1}=s$. By hypothesis, for $1\le i \le m+1$, there exist elements $t'_i \in G$ such that $d(\phi(t_i),t'_i)<\epsilon$ and $\eta(t_1\cdots t_i)= t'_1\cdots t'_i$ for all $i=1\ldots m+1$. Thus
$$d\big(\eta(fs), \eta(f)\phi(s)\big) =  d\big(t'_1\cdots t'_{m+1},t'_1\cdots t'_m \phi(s)\big)=d(t'_{m+1},\phi(s))=d(t'_{m+1},\phi(t_{m+1}))<\epsilon.$$

\end{proof}

\begin{cor}
For every $x \in X$, the map $\phi_x:\F \to G$ is an $\epsilon$-perturbation of $\phi$.
\end{cor}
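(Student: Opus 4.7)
The plan is to apply Lemma \ref{lem:start} directly, using the pieces $\psi_{e_i}$ as the required approximants $t_i'$.

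Fix $x\in X$ and an arbitrary sequence $t_1,\dots,t_m\in S$. Because $x$ lies in the graph subshift $X_\sG$, for each $i$ there is a directed edge $e_i$ in $\sG$ from $x(t_1\cdots t_{i-1})$ to $x(t_1\cdots t_i)$ labeled $t_i$ (with the convention $t_0=id$). Set $t_i' := \psi_{e_i}\in G$. By the construction of the elements $\psi_e$ in subsection \ref{perturbations}, whenever $e$ is an edge of $\sG$ labeled $s$, one has $\psi_e = g_v\phi(s)g_w^{-1}$ with $d(g_v,id),d(g_w,id)<\delta$, and hence by the choice of $\delta$, $d(\psi_e,\phi(s))<\epsilon$. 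Applying this to $e=e_i$ (labeled $t_i$) yields $d(t_i',\phi(t_i))<\epsilon$.

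Next I would verify the compatibility condition of Lemma \ref{lem:start}, namely that $\phi_x(t_1\cdots t_i)=t_1'\cdots t_i'$ for every $i$. But this is immediate from the very definition of $\phi_x$: representing the word $t_1\cdots t_i$ in the prescribed way, $\phi_x(t_1\cdots t_i)=\psi_{e_1}\cdots\psi_{e_i}=t_1'\cdots t_i'$. (The definition of $\phi_x$ is independent of the chosen representation, so one may use the given word even if it is not reduced.)

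With both hypotheses of Lemma \ref{lem:start} verified for the arbitrary sequence $t_1,\dots,t_m$, the lemma yields that $\phi_x$ is an $\epsilon$-perturbation of $\phi$. There is no real obstacle here; the content of the corollary is entirely absorbed into the design of the $\psi_e$'s (which was already carried out in subsection \ref{perturbations}) and into Lemma \ref{lem:start}. The only thing to notice is that the graph-subshift condition on $x$ is exactly what guarantees the existence of the edges $e_i$ along every word in $S$, so that the approximants $t_i'=\psi_{e_i}$ are well-defined.
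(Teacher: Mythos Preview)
Your proof is correct and is exactly the argument the paper intends: the corollary is stated without proof precisely because it follows immediately from Lemma~\ref{lem:start} by taking $t_i'=\psi_{e_i}$, using $d(\psi_{e_i},\phi(t_i))<\epsilon$ and the definition $\phi_x(t_1\cdots t_i)=\psi_{e_1}\cdots\psi_{e_i}$. Your observation that the graph-subshift condition guarantees the existence of the edges $e_i$, and that independence of representation allows unreduced words, is the right way to fill in the details.
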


\subsection{Embedding $\Gamma \backslash G$ in $X$}\label{embedding}

Let $L: \Gamma\backslash G \to V$ be the labeling map. That is, $L(\Gamma g)= v$ if $\Gamma g \in v$. For each $\Gamma g   \in \Gamma \backslash G$, let $\Psi(\Gamma g) \in V^{\F}$ be the treequence $\Psi(\Gamma g)(f)=L( \Gamma g\phi(f))$.

${\F}$  acts on the left on $\Gamma \backslash G$ by: $f \cdot \Gamma g:=\Gamma g \phi(f^{-1})$. This
action preserves $\mu$, the normalized Haar measure on $\Gamma \backslash G$. $\Psi$ is
equivariant with respect to the left action of ${\F}$. The image of $\Psi$ lies inside $X$, so $\Psi_*(\mu)$ is a shift-invariant Borel probability measure on $X$. By theorem \ref{thm:core}, there exists a periodic treequence $z \in X$. Indeed, since $\mu(v_1)>0$, $\Psi_*(\mu)\big(\{ x \in X ~|~ x(id) = v_1\}\big) >0$. Thus, there exists a periodic treequence $z \in X$ such that $z(id)=v_1$. 

To finish the proof of theorem \ref{thm:uniform}, we claim that $\phi_z$ is virtually a homomorphism into $\Gamma$. We will need the following lemma.

\begin{lem}
For any $x \in X$ and $f,g\in \F$, $\phi_x(f)\phi_{f^{-1}x}(g) = \phi_x(fg).$
\end{lem}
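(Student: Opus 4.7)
The plan is to unravel the definitions of both sides by choosing convenient representations of $f$ and $g$ as words in $S$, and then checking that the two products of edge-elements $\psi_e$ are literally the same sequence of factors. Since each $\phi_x$ is built from the \emph{same} fixed collection $\{\psi_e\}_{e \in E(\sG)}$ chosen in \S\ref{perturbations} (the $\psi_e$ depend only on the basepoints $p_i$ and on $\sG$, not on $x$), the whole argument reduces to tracking indices.

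First I would fix reduced expressions $f = t_1 \cdots t_m$ and $g = u_1 \cdots u_k$ with $t_i, u_j \in S$, and use the concatenation $t_1 \cdots t_m u_1 \cdots u_k$ as a (possibly non-reduced) representation of $fg$; this is legal because the definition of $\phi_x$ is independent of the chosen representation. Then
$$\phi_x(fg) \;=\; \psi_{e_1} \cdots \psi_{e_m} \cdot \psi_{e'_1} \cdots \psi_{e'_k},$$
where $e_i$ is the edge from $x(t_1 \cdots t_{i-1})$ to $x(t_1 \cdots t_i)$ labeled $t_i$, and $e'_j$ is the edge from $x(f u_1 \cdots u_{j-1})$ to $x(f u_1 \cdots u_j)$ labeled $u_j$. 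The first $m$ factors are by definition $\phi_x(f)$. For the last $k$ factors, I would use the shift-action identity $(f^{-1} x)(h) = x(fh)$: applied to $h = u_1 \cdots u_{j-1}$ and $h = u_1 \cdots u_j$, this shows that the edge $e'_j$ appearing in $\phi_x(fg)$ coincides exactly with the $j$-th edge used in computing $\phi_{f^{-1} x}(g)$. Multiplying the resulting factorizations gives the identity.

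The only subtlety — the ``main obstacle,'' though it is a mild one — is the bookkeeping across possible cancellations when one concatenates the representative words for $f$ and $g$; for instance if $t_m u_1 = \mathrm{id}$ in $\F$. This is handled by the standing property $\psi_{e'} = \psi_e^{-1}$ for the reverse edge $e'$, together with the fact that $x(t_1 \cdots t_{i-1} s s^{-1}) = x(t_1 \cdots t_{i-1})$, which is precisely the reason $\phi_x$ is well defined on words regardless of freely reducing. So once I verify representation-independence has been established (as it already is in \S\ref{perturbations}), I am free to splice the words, and the proof collapses into the one-line edge-identification above.
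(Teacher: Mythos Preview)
Your proposal is correct and follows essentially the same argument as the paper: write $f$ and $g$ as words in $S$, expand $\phi_x(fg)$ along the concatenated word, and then use the shift identity $(f^{-1}x)(h)=x(fh)$ to identify the tail factors with $\phi_{f^{-1}x}(g)$. Your extra paragraph on cancellations is not needed once representation-independence is in place, but it does no harm.
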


\begin{proof}
Let $f=t_1\cdots t_m$, $g=u_1\cdots u_k$ with $t_i, u_i \in S$. Let $t_0=u_0=id$. Let $e_i$ be the edge in $\sG$ from $x(t_0 \cdots t_{i-1})$ to $x(t_1 \cdots t_{i})$ labeled $t_i$. Let $e'_i$ be the edge in $\sG$ from $x(fu_0 \cdots u_{i-1})$ to $x(fu_1 \cdots u_{i})$ labeled $u_i$. By definition, $\phi_x(f)= \psi_{e_1} \cdots \psi_{e_m}$ and  $\phi_x(fg)= \psi_{e_1} \cdots \psi_{e_m}\psi_{e'_1} \cdots \psi_{e'_k}$.

Note that $e'_i$ is the edge from $(f^{-1}x)(u_0 \cdots u_{i-1})$ to $(f^{-1}x)(u_1 \cdots u_{i})$ labeled $u_i$. Therefore, $\phi_{f^{-1}x}(g)=\psi_{e'_1} \cdots \psi_{e'_k}=\phi_x(f)^{-1}\phi_x(fg)$.
\end{proof}

Now, let $F_z< \F$ be the symmetry group of $z$. Since $z$ is periodic, $F_z$ has finite index in $\F$. Let $f\in F_z$ and $g \in \F$. Then
$$\phi_z(fg) = \phi_z(f)\phi_{f^{-1}z}(g) = \phi_z(f)\phi_{z}(g).$$
This shows that $\phi_z$ is virtually a homomorphism. To show that $\phi_z(F_z) < \Gamma$, we need the next lemma.

\begin{lem}
Let $x\in X$ be such that $x(id)=v_1$. Then, for any $f \in \F$, $\Gamma \phi_x(f) $ is the basepoint of $x(f) \in V$.
\end{lem}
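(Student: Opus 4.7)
The plan is a straightforward induction on the word length of $f$ with respect to the generating set $S$, using the defining property of the local perturbations $\psi_e$.

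First I would set up the base case $f=\id$. The empty product gives $\phi_x(\id) = \id$, and since $x(\id)=v_1$ has basepoint $p_1=\Gamma$ (by our normalization in \S\ref{perturbations}), we get $\Gamma\phi_x(\id)=\Gamma = p_1$, which is exactly the basepoint of $x(\id)$.

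For the inductive step, I would fix any word $f$ of length $m$ with $\Gamma\phi_x(f)=p_{x(f)}$, and consider $f'=fs$ for some $s\in S$. Writing $f = t_1\cdots t_m$ and using $t_{m+1}=s$, the definition of $\phi_x$ gives
\[
\phi_x(f') \;=\; \psi_{e_1}\cdots\psi_{e_m}\psi_{e_{m+1}} \;=\; \phi_x(f)\,\psi_{e_{m+1}},
\]
where $e_{m+1}$ is the edge of $\sG$ from $v:=x(f)$ to $w:=x(fs)$ labeled $s$. The key point, already verified in the construction of $\psi_e$ in \S\ref{perturbations}, is that $p_v\psi_e = p_w$ whenever $e=(v,w)$ is an edge labeled $s$: indeed $\psi_e = g_v\phi(s)g_w^{-1}$ with $p_vg_v = p$ and $p_w g_w = q$ and $p\phi(s)=q$, so $p_v\psi_e = p g_w^{-1} = q g_w^{-1} = p_w$. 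Applying this with the inductive hypothesis yields
\[
\Gamma\phi_x(f') \;=\; \bigl(\Gamma\phi_x(f)\bigr)\psi_{e_{m+1}} \;=\; p_{x(f)}\,\psi_{e_{m+1}} \;=\; p_{x(fs)},
\]
which closes the induction.

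The only subtlety — and therefore the main (minor) obstacle — is confirming that $\phi_x$ is well-defined on $\F$ so that the inductive step may be performed along any representative word. This is already handled in \S\ref{perturbations}: the choice $\psi_{e'}=\psi_e^{-1}$ for the reversed edge $e'$ together with the fact that $S$ freely generates $\F$ ensures that $\phi_x(t_1\cdots t_m)$ depends only on the group element, so the inductive step above applies to any expression of $f'$ extending an expression of $f$ by a single generator.
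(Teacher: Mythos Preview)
Your proof is correct and follows essentially the same approach as the paper: both rely on the identity $p_v\psi_e=p_w$ for each edge $e=(v,w;s)$ to telescope $\Gamma\phi_x(t_1\cdots t_m)$ through the basepoints $q_i$ of $x(t_1\cdots t_i)$. The paper presents this as a direct chain $q_{i-1}\psi_{e_i}=q_i$ rather than a formal induction, but the content is identical.
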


\begin{proof}
Let $f=t_1\cdots t_m$ with $t_i \in S$. Let $t_0=id$. Let $e_i$ be the edge in $\sG$ from $x(t_0 \cdots t_{i-1})$ to $x(t_1 \cdots t_{i})$ labeled $t_i$. By definition, $\phi_x(f)= \psi_{e_1} \cdots \psi_{e_m}$. 

Let $q_i$ be the basepoint of $x(t_0 \cdots t_{i})\in V$. By definition of $\psi_e$, $q_{i-1}\psi_{e_{i}} = q_{i}$. So $q_0 \phi_x(f) = q_m$. Since $x(id)=v_1$, $q_0 = \Gamma$. So $\Gamma \phi_x(f) = q_m$ is the basepoint of $x(t_0 \cdots t_{m})=x(f)$.
\end{proof}

The lemma implies that for  $f\in F_z$, $\Gamma \phi_z(f) $ is the basepoint of $z(f) \subset \Gamma \backslash G$. But $z(f)=z(id) = v_1$ since $f \in F_z$. So the basepoint of $z(f)$ is $\Gamma$. That is, $\Gamma \phi_z(f) =\Gamma$. Of course, this implies $\phi_z(F_z) < \Gamma$. This completes the proof of theorem \ref{thm:uniform}.

\section{Subshifts determined by infinite graphs}\label{sec:sd2}

The proof of theorem \ref{thm:nonuniform} follows the same ideas as the proof of theorem \ref{thm:uniform}. However, because $\Gamma \backslash G$ may be noncompact, it is necessary to work with infinite partitions and therefore, with subshifts determined by infinite graphs. Theorem \ref{thm:core} does not apply in this case. So we generalize theorem \ref{thm:core} to certain infinite-graph subshifts. This is used in the next section to prove theorem \ref{thm:nonuniform}. To begin, we need some definitions.

\begin{defn}\label{defn:connected}
Recall that $S \subset \F$ is a finite symmetric free generating set. Let $\cF$ be the Cayley graph of $\F$. It has vertex set $\F$ and for every $f\in\F$ and $s\in S$, there is a directed edge from $f $ to $fs$ labeled $s$. The {\it induced subgraph} of a set $F \subset \F$ is the largest subgraph of $\cF$ with vertex set $F$. If it connected then we say $F$ is {\it $S$-connected}. An {\it $S$-connected component} of a set $F \subset \F$ is an $S$-connected subset $D \subset \F$ that is maximal among all $S$-connected subsets of $F$ with respect to inclusion.
\end{defn}

\begin{thm}\label{thm:core2}
Let $X \subset V^{\F}$ be a graph subshift determined by a graph $\sG=(V,E)$. Suppose that there is a finite set $A \subset V$ and a shift-invariant Borel probability measure $\mu$ on $X$ such that for $\mu$-almost every $x\in X$, every $S$-connected component the of set $x^{-1}(V-A) \subset \F$ is finite. Then there exists a periodic treequence in $X$. If for some $a_1\in A$, $\mu\big(\{x\in X~|~x(id)=a_1\}\big)>0$, then there exists a periodic treequence $x\in X$ with $x(id)=a_1$.

\end{thm}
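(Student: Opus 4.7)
I would reduce the infinite-graph Theorem \ref{thm:core2} to the finite-graph Theorem \ref{thm:core} in two steps: bound the excursion sizes, then encode the bounded-excursion structure in a finite alphabet.

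\emph{Bounded excursions.} For each $N \ge 1$, let $B_N \subset X$ be the set of $x$ for which every $S$-connected component of $x^{-1}(V - A)$ has at most $N$ vertices. Then $B_N$ is shift-invariant and measurable, and the hypothesis forces $\mu(B_N) \uparrow 1$. Choose $N$ so that $\mu(B_N) > 0$ (and $\mu(B_N \cap \{x(id) = a_1\}) > 0$ in the case of the second conclusion) and replace $\mu$ by its normalized restriction to $B_N$, which remains shift-invariant. We may therefore assume every excursion of a $\mu$-typical $x$ has at most $N$ vertices.

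\emph{Finite encoding.} For a finite subtree $D \subset \cF$ and $f \in D$, the depth $k(f) = d_{\cF}(f, \F \setminus D)$ and, among all closest boundary vertices of $D$ from $f$, the lex-minimal first generator $s(f) \in S$ are intrinsic to $(D,f)$ and hence shift-equivariant. Let $V'' = A \cup \{(\ast_k, s) : 1 \le k \le N,\, s \in S\}$ and define $\Psi : X \cap B_N \to (V'')^{\F}$ by $\Psi(x)(f) = x(f)$ when $x(f) \in A$ and $\Psi(x)(f) = (\ast_{k(f)}, s(f))$ otherwise, where $D$ is the $S$-connected component of $x^{-1}(V-A)$ containing $f$. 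The map $\Psi$ is shift-equivariant, and its image lies in a graph subshift $Y = X_{\sG''}$ for a finite constraint graph $\sG''$ whose edges enforce (i) the projection of the $\sG$-edges to the quotient alphabet $A \cup \{\ast\}$, and (ii) parent-direction consistency: a vertex labeled $(\ast_k, s)$ has its $s$-neighbor at depth $k-1$ (or in $A$, when $k = 1$), while its other neighbors have depth $\ge k$. Condition (ii) forces each $S$-connected component of $y^{-1}(V'' \setminus A)$ to be a rooted subtree of depth $\le N$, hence finite. The pushforward $\Psi_* \mu$ is a shift-invariant probability measure on $Y$ with $\Psi_*\mu(\{y(id) = a_1\}) > 0$ in the second case.

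\emph{Applying Theorem \ref{thm:core} and lifting.} Theorem \ref{thm:core} applied to $\sG''$ with $\Psi_* \mu$ yields a periodic $y \in Y$ (with $y(id) = a_1$ in the second case). I would lift $y$ to a periodic $z \in X$ by setting $z(f) = y(f)$ when $y(f) \in A$ and, for each of the finitely many $F_y$-orbits of the finite $\ast$-components of $y$, choosing a $V - A$-labeling compatible with the $\sG$-constraints and the prescribed boundary $A$-labels. Since $\Psi_* \mu$ is supported on $\Psi(X \cap B_N)$, each local pattern appearing in $y$ is realized by some $x \in X$, so a valid label exists at each $\ast$-vertex.

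\emph{Main obstacle.} The hard part is making the local liftings $F_y$-equivariant: this is a finite constraint-satisfaction problem on the quotient $F_y \backslash \cF$ whose solvability is not guaranteed by the subshift structure alone. The way around it is to refine the proof of Lemma \ref{lem:core} so that the finite $\F$-action on $K$ already carries an explicit $V$-label (including a $V - A$-label inside excursions) for each element of $K$; the rational weight scaling then produces a $V$-valued periodic treequence directly, and the lift to $X$ is built in from the start.
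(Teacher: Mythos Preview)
Your reduction has a genuine gap at the very first step. You claim that $\mu(B_N) \uparrow 1$, where $B_N$ is the set of $x$ whose excursions into $V-A$ all have at most $N$ vertices. But the hypothesis only says that for $\mu$-a.e.\ $x$ each excursion is finite, not that the excursion sizes are uniformly bounded. These are different conditions: $\bigcup_N B_N$ is the set of $x$ with \emph{bounded} excursion sizes, and this can have measure zero even when every excursion is finite. For a concrete example with $\F=\Z$, take $A=\{a\}$, $V=A\cup\N$, and a graph allowing transitions $a\to a$, $a\to 1$, $n\to n{+}1$, $n\to a$; a renewal-type invariant measure with excursion-length distribution of unbounded support and finite mean gives $\mu(B_N)=0$ for every $N$ (by ergodicity, a.e.\ $x$ sees excursions of every length). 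So you cannot restrict to $B_N$ and the subsequent finite encoding never gets off the ground. Your lifting step also remains incomplete: local realizability of patterns in $y$ does not yield an $F_y$-equivariant $V$-labeling of the $\ast$-components, and your suggestion to ``refine Lemma~\ref{lem:core}'' does not say how to carry the infinite alphabet $V-A$ through the rational-weight construction.

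The paper avoids both problems by never passing to a finite alphabet. It works entirely at the level of \emph{weights} on $\sG$: the weight $W_\mu$ is decomposed as $W'_\mu + \sum_{z\in Z'} W_z$, where $W'_\mu$ is the truncation to $A$ and each $W_z$ records the contribution of a single finite excursion type $z$ (so each $W_z$ has finite support, though there are infinitely many $z$'s). The key step is a convex-geometry lemma: the vector $\vv\in\R^{A\times S}$ of boundary fluxes equals $\sum_z \vv_z$, and since $\vv$ lies in the interior of the closed positive cone generated by the $\vv_z$, it equals a \emph{finite} nonnegative combination $\sum_{z\in Z''} t_z \vv_z$. Replacing the infinite sum by $W=W'_\mu+\sum_{z\in Z''} t_z W_z$ gives a genuine weight supported on a finite subgraph of $\sG$, and Lemma~\ref{lem:core} applies directly. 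No uniform bound on excursion sizes is needed, and no lifting problem arises because the $V$-labels are never discarded.
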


The rest of this section proves this theorem. The next section shows how to apply this result to obtain theorem \ref{thm:nonuniform}. To prove this theorem, we show that there exists a weight supported on a finite subgraph of $\sG$ and then invoke lemma \ref{lem:core}. To do this, we represent the weight $W_\mu$ as a sum of functions that correspond to $x^{-1}(A)$ and the connected components of $x^{-1}(V-A)$ for $x\in X$. Then a simple convex geometric argument yields the existence of the desired weight. We will need some definitions.

Let $W_\mu$ be as defined in lemma \ref{lem:Wmu}. Let $W'_\mu:V \cup E \to [0,\infty)$ be the function defined by ``truncating $W_\mu$'' off of $V-A$. To be precise:
\begin{itemize}
\item $W'_\mu(a)=W_\mu(a)$ for $a\in A$,
\item $W'_\mu(a, b;s) =W_\mu(a, b;s)$ for $a,b \in A$ and $s\in S$,
\item $W'_\mu(v)=0$ for $v \in V-A$,
\item $W'_\mu(v,w;s) = 0$ if either $v \in V-A$ or $w \in V-A$.
\end{itemize}
$W'_\mu$ is not a weight in general. We will write $W_\mu$ as a sum of $W'_\mu$ and some other functions, defined next.

The {\it outer boundary} of a set $C \subset \F$ is the set of all elements $f \in \F$ such that $f \notin C$ but $f$ is adjacent to an element in $C$ (i.e., $\exists s \in S$ such that $fs \in C$). It is denoted by $\partial_o C$.

Let $Z$ be the collection of all functions $z:D_z \to V$ such that
\begin{itemize}
\item $D_z \subset \F$ is finite,
\item if $C_z=z^{-1}(V-A)$ then $C_z$ is connected and $D_z = C_z \cup \partial_o C_z$.
\end{itemize}

For $z \in Z$, let $[z] \subset V^\F$ be the set of all functions $x:\F \to V$ such that there is an $f\in \F$ satisfying 
\begin{itemize}
\item $f^{-1} \in C_z$,
\item $x(fd)=z(d)$ for all $d\in D_z$.
\end{itemize}

Recall that $(v,w;s)$ denotes the edge in $E(\sG)$ from $v$ to $w$ labeled $s$ (where $v,w \in V$ and $s\in S$) if one exists. For $z\in Z$, define a function $W_z:V(\sG)\cup E(\sG) \to [0,\infty)$ as follows.
\begin{itemize}
\item For $a \in A$, let $W_z(a)=0$.
\item For $v \in V-A$, let 
$$W_z(v) =\mu\big( \{x \in X| x(id)=v, x\in [z] \} \big).$$

\item For $v\in V-A$, $w\in V$ and $s\in S$, let $$W_z(v,w;s) = \mu\big( \{x \in X| x(id)=v, x(s)=w, x\in [z] \} \big).$$

\item For $a \in A, v \in V-A, s\in S$ let $W_z(a,v;s) = W_z(v,a;s^{-1})$.
\item Let $W_z(a,b;s)=0$ for any $a,b \in A$ and $s\in S$.
\end{itemize}
The function $W_z$ is not a weight in general. Since the domain of each $z \in Z$ is finite, $W_z$ is supported on a finite subgraph of $\sG$ (i.e., the subset of $V\cup E$ on which $W_z$ is nonzero is finite). Choose a subcollection $Z' \subset Z$ such that for all $z\in Z$ there exists a unique $z'\in Z'$ with $[z]=[z']$. 
\begin{lem}
$$W_\mu = W'_\mu + \sum_{z \in Z'} W_z.$$
\end{lem}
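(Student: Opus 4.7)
The plan is to verify the identity termwise, exploiting the hypothesis that for $\mu$-a.e.\ $x \in X$ every $S$-connected component of $x^{-1}(V-A)$ is finite. The organizing idea is a partition of a.e.\ $x$ according to what happens at $id$: either $x(id) \in A$, in which case $x$ contributes to $W'_\mu$; or $id$ lies in a unique finite $S$-connected component $C(x)$ of $x^{-1}(V-A)$, in which case, setting $D(x) = C(x) \cup \partial_o C(x)$, the restriction $x|_{D(x)}$ is a member of $Z$ and $x$ lies in $[x|_{D(x)}]$, so $x$ contributes to exactly one summand $W_z$ with $z \in Z'$.

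The first substantive step, and the main obstacle, is a structural lemma: for every $z \in Z'$ and every $x \in [z]$ witnessed by a base point $f \in \F$, the translate $f C_z$ coincides with the $S$-connected component of $id$ in $x^{-1}(V-A)$. Indeed $fC_z$ is $S$-connected and contains $id = f \cdot f^{-1}$; and the vertices of its outer boundary $f \partial_o C_z$ are labeled by $z(\partial_o C_z) \subset A$ (since $\partial_o C_z \subset D_z \setminus C_z = D_z \setminus z^{-1}(V-A)$), so no $S$-edge exits $fC_z$ into $x^{-1}(V-A)$. It follows that if $x \in [z_1] \cap [z_2]$ with $z_1, z_2 \in Z'$, the witnessing translates $f_1 C_{z_1}$ and $f_2 C_{z_2}$ must be equal, which forces $z_1$ and $z_2$ to be left-translates of each other in $\F$; then $[z_1]=[z_2]$, and hence $z_1 = z_2$ by the choice of $Z'$. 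Modulo a $\mu$-null set, $\{[z] : z \in Z'\}$ therefore partitions $\{x \in X : x(id) \in V-A\}$, and within each class $[z]$ the base point $f$ relating $x$ to $z$ is determined up to the symmetries of $z$ (which does not affect the measure computation).

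Once this is in hand, the identity is verified by splitting into cases. For $a \in A$, only $W'_\mu(a)$ contributes and equals $W_\mu(a)$. For $v \in V-A$, $W'_\mu(v) = 0$ and the partition above gives
\[
W_\mu(v) \ = \ \sum_{z \in Z'} \mu\bigl(\{x : x(id) = v,\ x \in [z]\}\bigr) \ = \ \sum_{z \in Z'} W_z(v).
\]
On edges $(a,b;s)$ with $a,b \in A$, again only $W'_\mu$ contributes; on edges $(v,w;s)$ with $v \in V-A$ the partition argument applies verbatim. The one edge case needing extra care is $(a,v;s)$ with $a \in A$ and $v \in V-A$, where $W_z(a,v;s)$ is defined through $W_z(v,a;s^{-1})$. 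I would apply the $\sigma_s$-invariance of $\mu$ to rewrite
\[
W_\mu(a,v;s) \ = \ \mu\bigl(\{x : x(id) = a,\ x(s) = v\}\bigr) \ = \ \mu\bigl(\{x : x(id) = v,\ x(s^{-1}) = a\}\bigr),
\]
and then invoke the partition of $\{x : x(id) = v\}$ exactly as above to match $\sum_{z \in Z'} W_z(v,a;s^{-1})$.
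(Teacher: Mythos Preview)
Your proof is correct and follows the same approach as the paper: the paper's proof is a two-sentence appeal to the definitions plus the remark that the sets $[z]$ for $z\in Z'$ are pairwise disjoint, and you have simply written out in full the partition argument and case analysis that the paper leaves implicit. The one minor slip is that the shift you apply in the $(a,v;s)$ case is $\sigma_{s^{-1}}$ rather than $\sigma_s$, but the computation is otherwise fine.
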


\begin{proof}
This follows immediately from the definitions and the hypothesis on $\mu$. Note that the sets $[z]$ for $z\in Z'$ are pairwise disjoint.
\end{proof}

Let $\vv \in \R^{A \times S}$ be the vector
$$\vv(a,s) = \sum_{b\in V-A} W_\mu(a, b;s).$$

For $z\in Z$, let $\vv_z \in \R^{A \times S}$ be the vector 
$$\vv_z(a,s) = \sum_{b\in V-A} W_z(a,b;s).$$

The lemma above implies $\vv(a,s) = \sum_{z \in Z'} \vv_z(a,s)$. The next lemma enables us to replace this sum with a finite sum.

\begin{lem}
Let $R=\{\vr_i\}_{i=1}^\infty$ be a sequence of nonnegative vectors in $\R^k$ for some $k < \infty$. Let $\vr_\infty$ be the sum $\vr_\infty:=\sum_{i=1}^\infty \vr_i$. If $\vr_\infty \in \R^k$ (i.e., every component of $\vr_\infty$ is finite) then there exists an $N>0$ and nonnegative coefficients $t_1, \ldots, t_N$ such that $\vr_\infty = \sum_{i=1}^N t_i \vr_i.$
\end{lem}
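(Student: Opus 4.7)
The plan is to show that $\vr_\infty$ lies in $C_N := \operatorname{cone}(\vr_1,\dots,\vr_N)$ for some finite $N$; the conclusion of the lemma follows immediately, since one may then pad with zero coefficients on any missing indices. The argument will be by contradiction, using hyperplane separation against each $C_N$ together with compactness of the unit sphere in a finite-dimensional subspace.

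Suppose toward contradiction that $\vr_\infty \notin C_N$ for every $N$. Work inside the linear subspace $V := \operatorname{span}\{\vr_i : i \ge 1\} \subseteq \R^k$, which is finite-dimensional and contains every partial sum $T_N := \sum_{i=1}^N \vr_i$ and hence also the limit $\vr_\infty$. Each $C_N \subseteq V$ is a closed convex cone that misses $\vr_\infty$, so hyperplane separation inside $V$ produces a unit vector $\vu_N \in V$ with $\vu_N \cdot \vr_\infty > 0$ and $\vu_N \cdot \vv \le 0$ for every $\vv \in C_N$. In particular $\vu_N \cdot \vr_i \le 0$ whenever $i \le N$.

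Next, by compactness of the unit sphere in $V$, extract a subsequence $\vu_{N_j} \to \vu$ with $|\vu|=1$. For each fixed $i$, taking $j$ large enough that $N_j \ge i$ and passing to the limit yields $\vu \cdot \vr_i \le 0$, and similarly $\vu \cdot \vr_\infty \ge 0$. On the other hand, by continuity of the inner product and the assumption $T_N \to \vr_\infty$,
\[
\vu \cdot \vr_\infty \;=\; \lim_{N\to\infty} \vu \cdot T_N \;=\; \sum_{i=1}^\infty \vu \cdot \vr_i \;\le\; 0,
\]
since every term is nonpositive. Hence $\vu \cdot \vr_\infty = 0$, and a sum of nonpositive terms that vanishes must be termwise zero, so $\vu \cdot \vr_i = 0$ for every $i$.

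Finally, $\vu \in V$ is then orthogonal to every generator of $V$, forcing $\vu = 0$ and contradicting $|\vu|=1$. Thus $\vr_\infty \in C_N$ for some $N$, as required. The one delicate step, and the main substantive point of the proof, is the displayed computation: the strict separation $\vu_N \cdot \vr_\infty > 0$ collapses to mere nonnegativity in the limit, and the sign information has to be recovered by passing $\vu$ through the infinite sum and using that a vanishing sum of nonpositive terms must be termwise zero. That is exactly what upgrades the lost strict inequality into the strong orthogonality conclusion needed for the contradiction.
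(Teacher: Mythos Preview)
Your proof is correct and relies on the same core tools as the paper's---hyperplane separation from each finite cone $C_N$ followed by a compactness argument on the unit normals---but the structure differs. The paper splits the work into two claims: first it shows $\vr_\infty$ lies in the \emph{interior} of the closed cone $C$ generated by all the $\vr_i$ (via a ``subtract one generator'' trick: if $\vr_\infty$ were on the boundary with supporting hyperplane $\Pi$, choose $\vr_j \notin \Pi$ and observe that $\vr_\infty - \vr_j = \sum_{i\ne j}\vr_i \in C$ falls on the wrong side of $\Pi$), and then separately shows that any interior point of $C$ must lie in some $C_N$. Your argument bypasses the interior statement entirely: you pass directly to the limiting normal $\vu$ and derive the contradiction from $\vu\cdot\vr_\infty=\sum_i \vu\cdot\vr_i$, which forces $\vu$ to be orthogonal to every generator of $V$ and hence zero. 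The paper's two-claim decomposition makes the underlying geometry more visible, while your version is shorter and avoids the intermediate step.
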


\begin{proof}
 If $U$ is a set of vectors in $\R^k$, then the {\it positive cone} of $U$ is the set of all vectors that can be expressed as $\sum_{i=1}^\infty c_i u_i$ with $c_i \ge 0$ and $u_i \in U$. Let $C$ be the closure of the positive cone of $R=\{\vr_i\}_{i=1}^\infty$.

If the interior of $C$ is empty, then
$C$ lies inside some linear subspace of $\R^k$ of positive codimension in which $C$ has nonempty interior. After replacing $\R^k$ with this subspace if necessary, it may be assumed that the interior of $C$ is nonempty.

\underline{Claim 1}: $\vr_\infty$ is in the interior of $C$.

Proof: Suppose for a contradiction that $\vr_\infty$ is on the boundary of $C$. Because $C$ is convex, there exists a supporting hyperplane $\Pi$ to $C$ at $\vr_\infty$. So, $\vr_\infty \in \Pi$ and $C$ lies in one
of the closed halfspaces determined by $\Pi$. Since the interior of $C$ is nonempty, there exists
$\vr_j\in R$ such that $\vr_j \notin \Pi$. 

Since $\vr_j \notin \Pi$ and $\vr_\infty \in \Pi$, it follows that the vector $\vr_\infty -\vr_j$ lies in the open half-space determined by $\Pi$ that does not contain $\vr_j$, i.e., the half-space that does not contain the interior of $C$. But $\vr_\infty - \vr_j = \sum_{i\ne j} \, \vr_i$ is contained in $C$. This contradiction proves the claim.


Let $C_n$ be the positive cone of $\{\vr_1,\ldots,\vr_n\}$.

\underline{Claim 2}: If $\vw$ is any point in the interior of $C$, then there exists $N>0$ such that $\vw \in C_N$.

Proof: Suppose for a contradiction that $\vw$ is not in $C_n$ for any $n$. Because $C_n$ is
convex there exists a hyperplane $\Pi_n$ containing $\vw$ that has trivial intersection
with $C_n$. The sequence $\{\Pi_n\}$ has a subsequential limit hyperplane $\Pi$ (with respect to the Hausdorff topology). Because $\{C_n\}$ is
an increasing sequence of convex sets, it follows that $\Pi$ does not intersect any of
the $C_n$'s. Therefore $C$ must be contained in one of the closed half-spaces determined by
$\Pi$. But this contradicts the hypothesis that $\vw$ is in the interior of $C$.

The two claims imply the lemma.
\end{proof}

By the lemma, there exists a finite collection $Z'' \subset Z'$ and nonnegative coefficients $t_z$ (for $z \in Z''$) such that $\vv = \sum_{z\in Z''} t_z \vv_z.$ Define a weight $W$ on $\sG$ by $W = W'_\mu + \sum_{z\in Z''} t_z W_z.$

\begin{lem}
$W$ is weight on $\sG$. It is supported on a finite subgraph. If for some $a_1 \in A$, $W_\mu(a_1)>0$ then $W(a_1)>0$.
\end{lem}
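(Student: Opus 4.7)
The plan is to check three properties of $W = W'_\mu + \sum_{z \in Z''} t_z W_z$: (a) that it has finite support, (b) that $W(a_1) > 0$ whenever $W_\mu(a_1) > 0$, and (c) that it satisfies the weight equations (\ref{eqn:weight1}) and (\ref{eqn:weight2}). Items (a) and (b) are bookkeeping. The support of $W'_\mu$ is contained in $A \cup \{(a,b;s) : a,b \in A,\, s \in S\}$, which is finite because $A$ is. Each $W_z$ with $z \in Z''$ has support contained in the finite image $z(D_z) \subset V$ together with edges among those vertices, and $Z''$ itself is finite, so $W$ is supported on a finite subgraph. For (b), $W_z$ vanishes on $A$ by definition, so $W(a_1) = W'_\mu(a_1) = W_\mu(a_1) > 0$.

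For (c) I would first establish the symmetry (\ref{eqn:weight2}) for each $W_z$ separately; summation then preserves it, and $W'_\mu$ inherits symmetry directly from $W_\mu$. When at least one of $v, w$ lies in $A$, the identity $W_z(v,w;s) = W_z(w,v;s^{-1})$ is built into the definition of $W_z$ on those entries. Next I would verify the row-sum relation (\ref{eqn:weight1}); it suffices to check $W(v) = \sum_w W(v,w;s)$ in one direction, since the other sum is recovered by combining this with the symmetry applied with $s$ replaced by $s^{-1}$. For $v \in V - A$, marginalizing the measure defining $W_z(v)$ over the value of $x(s)$ gives $\sum_w W_z(v,w;s) = W_z(v)$ termwise, so the equation holds summand by summand. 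For $v \in A$, the sum over $w$ splits into an $A$-part, where $W$ coincides with $W_\mu$, and a $(V-A)$-part equal to $\sum_z t_z \vv_z(v,s)$; by the defining choice of the coefficients $t_z$, this latter sum equals $\vv(v,s) = \sum_{u \in V-A} W_\mu(v,u;s)$, and reassembling the two pieces yields $W_\mu(v) = W(v)$.

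The main obstacle is verifying the symmetry $W_z(v,w;s) = W_z(w,v;s^{-1})$ in the case where both $v, w \in V - A$. Here I would invoke the $\mu$-preserving bijection $\sigma_{s^{-1}}: X \to X$, which carries $\{x : x(id) = v,\ x(s) = w\}$ bijectively onto $\{y : y(id) = w,\ y(s^{-1}) = v\}$. The delicate point is that the set $[z]$ is not shift-invariant in general, so a priori membership in $[z]$ need not transfer across this bijection. However, in the specific regime $v, w \in V - A$ it does: if $x \in [z]$ with witness $f$, then from $x(fd) = z(d)$ for $d \in D_z$ one obtains $x(s) = z(f^{-1} s)$, and the hypothesis $w = x(s) \in V - A$ forces $f^{-1} s \in C_z$, so $s^{-1} f$ is a legitimate witness for $\sigma_{s^{-1}} x$. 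This local transport of witnesses restricts $\sigma_{s^{-1}}$ to a bijection between the two $[z]$-restricted sets and yields the required equality of $\mu$-measures.
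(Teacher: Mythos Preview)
Your proof is correct and follows the same strategy as the paper's: verify equation (\ref{eqn:weight1}) separately for vertices in $A$ (using the defining identity $\vv = \sum_{z \in Z''} t_z \vv_z$) and in $V-A$ (by marginalizing over $x(s)$), and handle the symmetry (\ref{eqn:weight2}) and the finite-support and positivity claims directly. You are in fact more thorough than the paper on one point: the paper asserts that $W(v,w;s) = W(w,v;s^{-1})$ is ``immediate from the definitions,'' whereas you correctly isolate and verify the one genuinely non-trivial case $v,w \in V-A$, where one must check that the shift $\sigma_{s^{-1}}$ carries the constraint $x \in [z]$ along with it on the relevant set.
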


\begin{proof}
It is immediate from the definitions that $W(v,w;s)=W(w,v;s^{-1})$ for all $v,w \in V$ and $s\in S$. We must show that equation (\ref{eqn:weight1}) holds for $W$. This is accomplished in two separate cases.

{\bf Case 1}. Let $a\in A$ and $s\in S$. We must show that $W(a) = \sum_{b \in V} W(a,b;s).$
First, 
$$\sum_{b \in V-A} W(a,b;s) =\sum_{b \in V-A} \sum_{z\in Z''} t_z W_z(a,b;s)= \sum_{z\in Z''} t_z\vv_z(a,s)= \vv(a,s) = \sum_{b\in V-A} W_\mu(a,b;s).$$
If $b\in A$ then $W(a,b;s)=W'_\mu(a,b;s)=W_\mu(a,b;s)$. So, 
\begin{eqnarray*}
W(a) &=& W_\mu(a) = \sum_{b \in A} W_\mu(a,b;s) + \sum_{b \in V-A} W_\mu(a,b;s)\\
&=& \sum_{b \in A} W(a,b;s) + \sum_{b \in V-A} W(a,b;s)=  \sum_{b \in V} W(a,b;s).
\end{eqnarray*}

{\bf Case 2}. Let $v\in V-A$ and $s\in S$. We must show that $W(v) = \sum_{w \in V} W(v,w;s).$ For any $z\in Z$,
\begin{eqnarray*}
W_z(v)&=&   \mu\big( \{x \in X| x(e)=v, x\in [z] \} \big)= \sum_{w \in V}  \mu\big( \{x \in X| x(e)=v, x(s)=w, x\in [z] \} \big)\\
&=& \sum_{w \in V} W_z(v,w;s).
\end{eqnarray*}
Thus,
$$W(v) = \sum_{z\in Z''} t_z W_z(v)
 = \sum_{z\in Z''} t_z \sum_{w \in V} W_z(v,w;s)
=  \sum_{w \in V} W(v,w;s).$$

From cases 1 and 2 and the fact that $W(v,w;s)=W(w,v;s^{-1})$, it follows that for any $v\in V$ and $s\in S$,
$$W(v)= \sum_{w \in V} W(v,w;s) =  \sum_{w \in V} W(w,v;s^{-1}).$$
Thus $W$ is a weight. It is supported on a finite subgraph because $Z''$ is finite and each $W_z$ for $z \in Z''$ has finite support. 
\end{proof}

Theorem \ref{thm:core2} now follows from the lemma above and lemma \ref{lem:core}.

\section{The Nonuniform Case}\label{sec:nonuniform}

The key ingredient to proving theorem \ref{thm:nonuniform} from theorem \ref{thm:core2} is the next lemma. Fix a nonuniform lattice $\Gamma<G=SO(n,1)=\Isom^+(\H^n)$. After passing to a finite index subgroup, we may assume, by Selberg's lemma, that $\Gamma$ is torsion-free. So $\H^n/\Gamma$ is a manifold. Let $\phi:\F \to G$ be an injective homomorphism onto a convex cocompact subgroup of $G$. Note that $SO(n,1)$ is unimodular, so Haar measure on $G$ induces a $G$-invariant probability measure on $\Gamma \backslash G$.



\begin{lem}
For any $\delta>0$ there exists a Borel partition $V=\{v_1,v_2,\ldots\}$ of $\Gamma \backslash G$ into sets of diameter at most $\delta$ and a finite set $A \subset V$ such that the following holds.

Let $L: \Gamma\backslash G \to V$ be the labeling map. So $L(\Gamma g)= v$ if $\Gamma g \in v$. For each $\Gamma g \in \Gamma \backslash G$, let $\Psi(\Gamma g) \in V^{\F}$ be the treequence $\Psi(\Gamma g)(f)=L\big( \Gamma g\phi(f)\big)$. Let $\mu$ be the probability measure on $\Gamma \backslash G$ induced by Haar measure on $G$. Then for $\mu$-almost every $\Gamma g \in \Gamma \backslash G$, every $S$-connected component of $\Psi(\Gamma g)^{-1}(V-A)$ is finite. (See the previous section for the definition of $S$-connected).
\end{lem}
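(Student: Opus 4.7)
The plan is to arrange $V$ and $A$ so that $V \setminus A$ is precisely the ``deep cusp'' region of $M := \Gamma\backslash \H^n$, and then to use the convex cocompactness of $\phi(\F)$ together with the fact that $L(\phi(\F))$ is Lebesgue-null on $\partial\H^n$ to rule out, for almost every $\Gamma g$, the existence of an infinite $S$-connected component of $\Psi(\Gamma g)^{-1}(V\setminus A)$. Let $o \in \H^n$ be a basepoint with stabilizer $K = SO(n)$, so $\Gamma\backslash G$ is the frame bundle over $M$, and set $L := \max_{s \in S} d(o, \phi(s) o)$. By the Margulis lemma, fix a depth $T$ large enough that all $\Gamma$-translates of the depth-$T$ horoballs based at representative parabolic fixed points are pairwise disjoint in $\H^n$, with pairwise separation greater than $L + 2\delta$. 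Let $K_0 \subset \Gamma\backslash G$ be the (compact) preimage of the complement of these deep cusps in $M$. Choose a locally finite Borel partition $V$ of $\Gamma\backslash G$ into cells of diameter at most $\delta$, and set $A := \{v \in V : v \cap K_0 \neq \emptyset\}$; then $A$ is finite, and every cell of $V \setminus A$ is contained in the deep cusp region.

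Suppose $C \subset \F$ is an infinite $S$-connected component of $\Psi(\Gamma g)^{-1}(V \setminus A)$. Since the Cayley graph of $\F$ is a locally finite tree, $C$ contains an infinite non-backtracking ray $f_0, f_1, f_2, \ldots$ with $f_{i+1} = f_i s_{i+1}$. Convex cocompactness of $\phi$ gives a continuous extension of the orbit map to the boundary, so $f_i \to \xi \in \partial\F$ implies $\phi(f_i) o \to \hat\xi \in L(\phi(\F))$; hence $g \phi(f_i) o \to g\hat\xi \in \partial\H^n$ with $d(o, g\phi(f_i)o) \to \infty$. By construction each $g\phi(f_i)o$ lies in some $\Gamma$-translate of a depth-$T$ horoball; consecutive points differ by at most $L$, while distinct deep horoballs are separated by more than $L$, so the sequence of horoballs is forced to stabilize from some index onward. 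All $g\phi(f_i)o$ for large $i$ then lie in a single horoball, whose only boundary accumulation point is its basepoint. Hence $g\hat\xi$ is a parabolic fixed point of $\Gamma$, i.e., $g^{-1} p \in L(\phi(\F))$ for some $p$ in the countable set $P$ of parabolic fixed points of $\Gamma$.

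It remains to show the set $B := \bigcup_{p \in P} \{g \in G : g^{-1} p \in L(\phi(\F))\}$ is Haar-null. For each $p \in P$, the map $g \mapsto g^{-1} p$ is a smooth submersion $G \to \partial\H^n$, so it pulls null sets back to null sets. Since $\phi(\F)$ is a convex cocompact free (hence non-lattice) subgroup of $SO(n,1)$, its critical exponent is strictly less than $n-1$, and therefore $L(\phi(\F))$ has Hausdorff dimension less than $n-1$ and is Lebesgue-null in $\partial\H^n$. Thus $B$ is a countable union of Haar-null sets, and its projection to $\Gamma\backslash G$ is $\mu$-null, giving the lemma. The main delicacy is the geometric step showing the sequence of horoballs stabilizes along the ray: this is what converts the $S$-connectedness along a ray in $\F$ (combined with the step bound $L$) into the statement that a single horoball eventually captures the orbit, so that its unique ideal basepoint must coincide with the convex-cocompact limit $g\hat\xi$.
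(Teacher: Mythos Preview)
Your proof is correct and follows essentially the same route as the paper's: use the thick--thin decomposition with deep horoballs separated by more than the step size $L=\max_{s\in S} d(o,\phi(s)o)$, so that an infinite $S$-connected component is trapped in a single horoball and forces $g\hat\xi$ to be a parabolic fixed point, then conclude by the countable union over parabolic points of pullbacks of the Lebesgue-null limit set. The only cosmetic difference is that the paper works with the whole connected component $F_0$ and simply takes any boundary accumulation point of the infinite discrete set $\phi(F_0)K$ inside the horoball, whereas you extract a geodesic ray via K\"onig's lemma and invoke the continuous boundary extension coming from convex cocompactness.
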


\begin{proof}
 Let $C \subset \partial \H^n $ be the cusp set of $\Gamma$. That is, $C$ is the set of all points $c\in \partial \H^n $ such that there exists a nontrivial parabolic element $g \in \Gamma$ with $gc=c$. It is countable since $\Gamma$ is countable.

Identify $\H^n$ with $G/K$ where $K<G$ is a maximal compact subgroup. Then $\H^n/\Gamma$ is identified with $\Gamma \backslash G/K$. Let $Q: \H^n \to \H^n/\Gamma$ be the quotient map. By Margulis' thin/thick decomposition of $\H^n/\Gamma$, there exists a compact set $T \subset \H^n/\Gamma$ such that if $T^c$ denotes the complement of $T$ in $\H^n/\Gamma$ then every connected component of $Q^{-1}(T^c)$ is a horoball. Each of these horoballs has a unique limit point $l \in \partial \H^n $ that is contained in the cusp set $C$. 

Choose a set $T$ as above so that $\Gamma K \in T$. Also choose $T$ so that for any $g\in G$ and $s\in S$, $\Gamma g K$ and $\Gamma g \phi(s) K $ cannot be in different connected components of $T^c$. To accomplish this, let $d_\Gamma$ be the distance function on $\H^n/\Gamma$ given by 
$$d_\Gamma(\Gamma g_1 K, \Gamma g_2 K) = \min \big\{d(\gamma_1 g_1 k_1, \gamma_2 g_2 k_2)~ | ~\gamma_1,\gamma_2 \in \Gamma, k_1,k_2 \in K\big\}.$$ 
Choose $T$ so large so that if $p,q$ are in different components of $T$ then $d_\Gamma(p,q) > \max_{s\in S} d(\phi(s), id)$. Then for any $g\in G$, $d_\Gamma(\Gamma g K, \Gamma g \phi(s)K) < d(\phi(s),id)$. Thus $\Gamma g K$ and $\Gamma g\phi(s) K$ cannot be in different components of $T^c$.


Let $\pi:\Gamma \backslash G \to \H^n/\Gamma$ be the projection map. Choose a Borel partition $V=\{v_1,v_2, \ldots\}$ of $\Gamma \backslash G$ into sets of diameter at most $\delta$ so that for some $A \subset V$, $\pi^{-1}(T) = \bigcup_{a\in A} a$. 


{\bf Claim:} If $g\in G$ is such that some $S$-connected component of $\Psi(\Gamma g)^{-1}(V-A)$ is infinite then $gL(\phi(\F)) \cap C \ne \emptyset$. 

Proof. If some $S$-connected component of $\Psi(\Gamma g)^{-1}(V-A)$ is infinite then there exists a set $F_0 \subset \F$ that is $S$-connected, infinite and $\Psi(\Gamma g)(f) \in V-A$ for all $f\in F_0$. The last condition implies that $\Gamma g \phi(f) K \notin T$ for all $f\in F_0$. Because $F_0$ is $S$-connected, the choice of $T$ implies that there is a component $H_0$ of $Q^{-1}(T^c)$ such that $g\phi(F_0)K \subset H_0$. Since $\phi(F_0)$ is infinite and discrete, there exist a point $l \in L(\phi(\F)) \subset \partial \H^n $ in the closure of $\phi(F_0) K$. Then $gl$ is in the closure of $H_0$. Therefore, $gl$ is in the cusp set $C$. This proves the claim.

For $c \in C$, let $G_c =\{g \in G~|~ g^{-1}c \in L(\phi(\F))\}$. Because $\phi(\F)$ is a convex cocompact free group, $L(\phi(\F))$ has measure zero in $\partial \H^n $ (with respect to Lebesgue measure). Therefore, $G_c$ has Haar measure zero. Since $C$ is countable, $\bigcup_{c\in C} G_c$ has Haar measure zero. By the claim, the set of all $g\in G$ such that some $S$-connected component of $\Psi(\Gamma g)^{-1}(V-A)$ is infinite is contained in $\bigcup_{c \in C} G_c$. So it has measure zero. This proves the lemma.
\end{proof}

As in subsection \ref{subsection:graph}, let $\delta>0$ be such that for all $g_1, g_2 \in G$ with $d(g_1, id)<\delta$ and $d(g_2, id)<\delta$, if $s\in S$ then $d\big(g_1\phi(s)g_2, \phi(s)\big) < \epsilon.$

Let $\mu$ be the probability measure on $\Gamma \backslash G$ induced by Haar measure on $G$. Choose a Borel partition $V=\{v_1,v_2,\ldots\}$ of $\Gamma \backslash G$ into sets $v_i$ of diameter less than
$\delta$ such that $\Gamma \in v_1$, $\mu(v_1)>0$ and $V$ satisfies the conclusion of the lemma above. 

Let $\sG$ be the graph with vertex set $V=\{v_1,v_2,  \ldots\}$ and edges defined as follows. For each $v, w \in V$, if there exists elements $p \in v$, $q\in w$ and $s\in S$ such that $p \phi(s) = q$ then there is a directed edge in $\sG$ from $v$ to $w$ labeled $s$. There are no other edges.

Let $X \subset V^{\F}$ be the graph subshift determined by $\sG$. As in subsection \ref{perturbations}, for every $x \in X$, there is an $\epsilon$-perturbation $\phi_x:\F \to G$ of $\phi$. 

Define $\Psi:\Gamma \backslash G \to X$ as in the lemma above. As in subsection \ref{embedding}, this map commutes with the action of $\F$. Therefore $\Psi_*(\mu)$ is a shift-invariant Borel probability measure on $X$. By the lemma above and theorem \ref{thm:core2}, there exists a periodic treequence $z \in X$. Indeed, since $\mu(v_1)>0$, $\Psi_*(\mu)\big(\{ x \in X ~|~ x(id) = v_1\}\big) >0$. Thus, there exists a periodic treequence $z \in X$ such that $z(id)=v_1$. As in subsection \ref{embedding}, $\phi_z$ is a $\epsilon$-perturbation of $\phi$ that is virtually a homomorphism into $\Gamma$. This proves theorem \ref{thm:nonuniform}.

\section{Asyptotic Geometric Properties}\label{section:about}
The goal of this section is to prove theorem \ref{thm:asymptotic}. For the reader's convenience, the next subsection defines the terms used in the statement of the theorem.

\subsection{Definitions}\label{sub:defns}

A nice reference for all the concepts below is [BH99].

\begin{defn}
Let $(X,d)$ be a metric space. A {\it geodesic} is a map $\gamma:I \to X$ where $I \subset\R$ is an interval and $d(\gamma(t),\gamma(t')) = |t-t'|$ for all $t', t\in [a,b]$. It is a {\it geodesic ray} from $x\in X$ if, in addition, $I=[0,\infty)$ and $\gamma(0)=x$. A {\it geodesic from $x$ to $y \in X$} is a geodesic of the form $\gamma:[a,b]\to X$ with $\gamma(a)=x$ and $\gamma(b)=y$. The image of $\gamma$ is a {\it geodesic segment} from $x$ to $y$ and is commonly denoted $[x,y]$ (although it depends on $\gamma$ and not just on $x$ and $y$). A {\it geodesic triangle} with vertices $x,y,z\in X$ is a union of three geodesic segments; one from $x$ to $y$, one from $y$ to $z$ and one from $z$ to $x$. $(X,d)$ is a {\it geodesic space} if between any two points $x,y \in X$ there exists a geodesic between them. 

For $\delta \ge 0$, a metric space $(X,d)$ is {\it $\delta$-hyperbolic} if it is a geodesic space and for every geodesic triangle $\Delta \subset X$, each side of $\Delta$ is contained in the $\delta$-neighborhood of the union of the other two sides. If $(X,d)$ is $\delta$-hyperbolic for some $\delta \ge 0$, then we say it is {\it Gromov-hyperbolic}.
\end{defn}


\begin{defn}[Gromov boundary]
Let $(X,d)$ be a proper metric space. Two geodesic rays $\gamma_1:[0,\infty)\to X$, $\gamma_2:[0,\infty)\to X$ are {\it equivalent} if there exists a $K\ge 0$ such that for any $t \ge 0$, $|\gamma_1(t)-\gamma_2(t)| \le K$. The {\it Gromov boundary} of $X$, denoted $\partial X$, is the set of equivalence classes of geodesic rays.

If $\gamma:[0,\infty)\to X$ is a geodesic ray in the equivalence class $\xi \in \partial X$, then we say that $\gamma$ limits on $\xi$ and write $\gamma(\infty)=\xi$.

If $(X,d)$ is $\delta$-hyperbolic for some $\delta \ge 0$, then given two points $\xi_1, \xi_2 \in \partial X$, there exists a geodesic $\gamma:(-\infty,+\infty) \to X$ such that the map $t \mapsto \gamma(-t)$ is a geodesic ray limiting on $\xi_1$ and the map $t \mapsto \gamma(t)$ is a geodesic ray limiting on $\xi_2$ [BH99, III.H, lemma 3.2]. In this case, we say that $\gamma$ is a geodesic from $\xi_1$ to $\xi_2$ and write $\gamma(-\infty)=\xi_1, \gamma(+\infty)=\xi_2$.
\end{defn}

\begin{defn}
A {\it generalized ray} is a geodesic $\gamma:I \to X$ where either $I=[0,R]$ for some $R >0$ or $I=[0,\infty)$. In the former case, define $\gamma(t)=\gamma(R)$ for all $t \in [R,\infty]$. We say that $\gamma$ is a generalized ray from $\gamma(0)$ to $\gamma(R)$.
\end{defn}

\begin{defn}
Let $(X,d)$ be a proper Gromov-hyperbolic space with basepoint $p\in X$. We topologize $X \cup \partial X$ as follows. Say that a sequence $\{x_i\}_{i=1}^\infty$ converges to $x_\infty$ if and only if there exists generalized rays $\gamma_i$ from $p$ to $x_i$ such that every subsequence of $\{\gamma_i\}$ has a subsequence that converges uniformly on compact subsets to a geodesic from $p$ to $x_\infty$. 

It is well-known this topology is independent of $p$ and makes $X \cup \partial X$ a compact space in which $\partial X$ is closed (e.g., [BH99, III.H proposition 3.7]).

\end{defn}

\begin{defn}
For any subset $Y \subset X \cup \partial X$, let $L(Y)$ be the intersection of $\partial X$ with the closure of $Y$ in $X \cup \partial X$.
\end{defn}

\begin{defn}[Visual metric]
Let $(X,d)$ be a proper Gromov-hyperbolic space with basepoint $p\in X$. A metric $d_\partial$ on $\partial X$ is called a {\it visual metric} with parameter $a > 0$, if it induces the same topology on $\partial X$ as given above and there exists a constant $C>0$ such that for all $\xi_1,\xi_2 \in \partial X$, if $\gamma$ is a geodesic from $\xi_1$ to $\xi_2$ then
$$C^{-1}a^{-d(p,\gamma)} \le d_\partial(\xi_1,\xi_2) \le C a^{-d(p,\gamma)}.$$
Here, $d(p,\gamma) = \inf_{t} d(p,\gamma(t))$.
\end{defn}

\begin{defn}
A subset $Y \subset X$ is {\it $\epsilon$-quasi-convex} if any geodesic segment $[x,y]$ between points $x,y \in Y$ is contained in the $\epsilon$-neighborhood of $X$. We say that $Y$ is {\it quasi-convex} if it is $\epsilon$-quasiconvex for some $\epsilon>0$.
\end{defn}

\begin{defn}
An infinite group $\Gamma$ acting by isometries on a proper Gromov-hyperbolic space $(X,d)$ is called {\it quasi-convex cocompact} if the action is properly discontinuous, $\Gamma$ does not fix any point of $\partial X$, and for some $\Gamma$-invariant quasi-convex subset $A \subset X$, the quotient $A/\Gamma$ is compact.
\end{defn}

\subsection{Quasi-isometries and quasi-geodesics}

In order to prove theorem \ref{thm:asymptotic}, we show (in the next subsection) that if $p \in X$ and $\epsilon>0$ is sufficiently small, then the map $h \mapsto \phi_\epsilon(h)p$ is a quasi-isometry of $H$ into $X$ (with respect to a fixed word metric on $H$). In this subsection, we introduce the necessary definitions and standard results needed to prove this.

\begin{defn}
Let $(X,d_X), (Y,d_Y)$ be metric spaces, $\lambda \ge 1, c \ge 0$. A map $\pi:X \to Y$ is a {\it $(\lambda, c)$-quasi-isometric embedding} if for all $x,y \in X$,
$$\lambda^{-1}d_X(x,y) - c \le d_Y\big(\pi(x),\pi(y)\big) \le \lambda d_X(x,y) + c.$$
We say that $\pi$ is a {\it quasi-isometric embedding} if it is a $(\lambda,c)$-quasi-isometric embedding for some constants $\lambda \ge 1, c\ge 0$. 
\end{defn}

\begin{defn}
For $\lambda \ge 1, c\ge 0$ a {\it $(\lambda, c)$-quasi-geodesic} in a metric space $(X,d)$ is a $(\lambda,c)$-quasi-isometric embedding $q:I \to X$ where $I$ is an interval on the real line (bounded or unbounded) or else the intersection of $\Z$ with such an interval. It is a {\it quasi-geodesic ray} if $I=[0,\infty)$ or $[0,\infty) \cap \Z$. 
\end{defn}

The theorem below is proven in [BH99, III.H, Theorem 1.7].
\begin{thm}[Stability of Quasi-Geodesics]\label{thm:stability}
For all $\delta \ge 0, \lambda \ge 1$, $c \ge 0$, there exists a constant $R=R(\delta,\lambda,c)$ with the following property.

Let $(X,d)$ denote a proper $\delta$-hyperbolic space. If $q:I \to X$ is a $(\lambda,c)$-quasi-geodesic in $X$ and $[x,y]$ is a geodesic segment joining the endpoints of $q$, then the Hausdorff distance between $[x,y]$ and the image of $q$ is less than $R$.
\end{thm}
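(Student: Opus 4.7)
The plan is to follow the classical Morse-lemma style argument for stability of quasi-geodesics, in three stages.

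First, I would tame $q$ into a continuous path. If $I$ is an interval of integers, interpolate between consecutive points $q(n), q(n+1)$ by a geodesic segment of length $d(q(n),q(n+1)) \le \lambda + c$; if $I$ is already a real interval, keep $q$ as is. A short calculation shows the resulting path $\tilde q$ is a continuous $(\lambda, c')$-quasi-geodesic with $c' = c'(\lambda,c)$, and the Hausdorff distance from $\tilde q$ to $q$ is bounded by $\lambda + c$. It therefore suffices to prove the theorem for continuous quasi-geodesics.

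The analytic core is a logarithmic divergence lemma: for any continuous path $\gamma: [0,T] \to X$ from $x$ to $y$ in a $\delta$-hyperbolic space and any point $p \in [x,y]$, one has
\[
d\big(p,\ \mathrm{Im}(\gamma)\big) \;\le\; \delta \log_2 \ell(\gamma) + C_0,
\]
with $C_0$ depending only on $\delta$. I would prove this by iterated bisection: apply $\delta$-thinness to the geodesic triangle with vertices $x, y, \gamma(T/2)$ to conclude that $p$ is within $\delta$ of one of the two sub-geodesics $[x,\gamma(T/2)]$ or $[\gamma(T/2), y]$; then bisect the surviving half-path and repeat. After $n \sim \log_2 \ell(\gamma)$ bisections the residual path has length $\le 1$, and summing the errors gives the bound. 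To control $d(p, \tilde q)$ for $p \in [x,y]$, pick a chord $[x', y'] \subset [x,y]$ of length $2L$ centered at $p$ (for a parameter $L$ chosen later), let $\tilde q(a), \tilde q(b)$ be points of $\tilde q$ nearest to $x', y'$, and concatenate the geodesics $[x', \tilde q(a)]$ and $[\tilde q(b), y']$ with $\tilde q|_{[a,b]}$ to form a continuous path from $x'$ to $y'$. The quasi-geodesic bound gives $\ell(\tilde q|_{[a,b]}) \le \lambda(|b-a|+c')$ with $|b-a| \le \lambda(d(\tilde q(a),\tilde q(b))+c')$, and $d(\tilde q(a),\tilde q(b)) \le 2L + 2D$ where $D = d(p,\tilde q)$, so the total length is at most a linear function of $L$ and $D$. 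The logarithmic lemma then forces $D \le \delta \log_2(A\, L + B\, D + C) + C_0$, which (for $L$ a fixed constant depending on $\delta,\lambda,c$) is a self-bounding inequality yielding $D \le R_1(\delta,\lambda,c)$.

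For the reverse inclusion, suppose some $\tilde q(t_0)$ is far from $[x,y]$, and let $[t_1, t_2] \ni t_0$ be a maximal interval on which $\tilde q$ stays outside the $R_1$-neighborhood of $[x,y]$. Then $\tilde q(t_1)$ and $\tilde q(t_2)$ each lie within $R_1$ of points $u_1, u_2 \in [x,y]$. The subgeodesic $[u_1, u_2]$ must be covered by the $(R_1+\text{const})$-neighborhood of $\tilde q|_{[t_1,t_2]}$ by the stage already proved, but $\tilde q|_{[t_1,t_2]}$ is disjoint from $N_{R_1}([x,y])$, so $d(u_1,u_2)$ is uniformly bounded. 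The quasi-geodesic inequality $|t_2 - t_1| \le \lambda(d(\tilde q(t_1), \tilde q(t_2)) + c') \le \lambda(2R_1 + d(u_1,u_2) + c')$ then bounds $|t_2 - t_1|$, and hence $d(\tilde q(t_0), \tilde q(t_1)) \le \lambda |t_2-t_1| + c'$, which bounds $d(\tilde q(t_0), [x,y])$ by a constant $R_2$. Taking $R = \max(R_1, R_2)$ completes the proof. The main obstacle is the self-bounding inequality in the logarithmic step; once that loop is closed, the remaining arguments are standard coarse-geometric bookkeeping.
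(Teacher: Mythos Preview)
The paper does not give its own proof of this theorem: it simply cites [BH99, III.H, Theorem 1.7] and moves on. Your sketch is essentially the classical Bridson--Haefliger argument from that reference (taming, the logarithmic divergence estimate via iterated bisection, a self-bounding inequality, then the reverse inclusion), so there is nothing to compare against in the paper itself.

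One small point worth tightening: in your second stage you write ``$D = d(p,\tilde q)$'' for a fixed $p$, but then use that $d(x',\tilde q(a)) \le D$ and $d(y',\tilde q(b)) \le D$. That only follows if $D$ is the \emph{supremum} of $d(p',\tilde q)$ over $p' \in [x,y]$, not the value at a single point; you should take $p$ to (nearly) realize the supremum. Relatedly, the free parameter $L$ is typically taken to be $2D$ so that the two bridging segments $[x',\tilde q(a)]$ and $[\tilde q(b),y']$ stay at distance $\ge D$ from $p$, forcing the nearest point on the concatenated path to lie on $\tilde q$ itself. With those adjustments the self-bounding inequality closes cleanly.
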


\begin{defn}
Let $(X,d)$ be a metric space and $M \ge 0, \lambda \ge 1, c\ge 0$. A path $q:I \to X$ is said to be an {\it $(M,\lambda,c)$-local-quasi-geodesic} if for all $a, b \in I$ with $0\le b-a \le M$, the restriction of $q$ to $[a,b]$ is a $(\lambda,c)$-quasi-geodesic.
\end{defn}

Theorem \ref{thm:localtoglobal} below states that any $(M,\lambda,c)$-local-quasi-geodesic in a $\delta$-hyperbolic space is a $(\lambda',c')$-quasi-geodesic for some $(\lambda',c')$ that depend only on $\delta,\lambda$ and $c$. This fact is well-known (e.g., it is stated in [Gr87, Remark 7.2B]), but I have not found a proof in the literature. The proof given below is based on the proof of theorem 1.13 of [BH99, chapter III.H]. We will need the following lemmas.

\begin{lem}\label{lem:K}
For every $\delta\ge 0, \lambda \ge 1, c\ge 0$, there exists a $K=K(\delta,\lambda,c)\ge 0$ such that the following holds. Let $(X,d)$ be a $\delta$-hyperbolic space. Let $q:I\to X$ be an $(M,\lambda,c)$-local-quasi-geodesic where $I$ is a closed interval of $\R$. Assume $M > 4R\lambda + 8\delta\lambda +2c\lambda+1$ where $R=R(\delta,\lambda,c)$ is as in theorem \ref{thm:stability}. Let $a,b$ be the endpoints of $q(I)$. If $[a,b]$ is a geodesic segment from $a$ to $b$, then $q(I)$ is contained in the $K$-neighborhood of $[a,b]$.
\end{lem}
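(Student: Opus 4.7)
My plan is to bound $D := \sup_{t\in I} d(q(t),[a,b])$ by a constant depending only on $\delta,\lambda,c$, via a slim-quadrilateral calculation together with Theorem \ref{thm:stability}. After linearly interpolating on short gaps I may assume $q$ is continuous, so the supremum $D$ is attained at some $t_0 \in I$; set $x := q(t_0)$, $t_\pm := t_0 \pm M/2$ truncated to $I$, and $x_\pm := q(t_\pm)$. The restriction $q|_{[t_-,t_+]}$ has parameter length at most $M$ and hence is a $(\lambda,c)$-quasi-geodesic; by Theorem \ref{thm:stability} there exists $y \in [x_-,x_+]$ with $d(x,y) \le R$.

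Next I compare $[x_-, x_+]$ to $[a,b]$. Choose nearest-point projections $x_\pm' \in [a,b]$ of $x_\pm$, so $d(x_\pm, x_\pm') \le D$, and form the geodesic quadrilateral with vertices $x_-, x_+, x_+', x_-'$, whose fourth side is a subsegment of $[a,b]$. Splitting along a diagonal expresses it as two $\delta$-thin triangles, so the quadrilateral is $2\delta$-slim, and $y$ lies in the $2\delta$-neighborhood of $[x_-, x_-'] \cup [x_-', x_+'] \cup [x_+, x_+']$. If $y$ is within $2\delta$ of the subsegment $[x_-', x_+'] \subset [a,b]$, then $D \le R + 2\delta$ and we are finished.

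Suppose instead $y$ is within $2\delta$ of some $z \in [x_-, x_-']$ (the case $z \in [x_+, x_+']$ is symmetric), and write $\sigma := d(z, x_-)$. In the interior-window sub-case (both $t_\pm = t_0 \pm M/2$), the quasi-geodesic lower bound gives $d(x, x_-) \ge M/(2\lambda) - c$; combining with $d(x, z) \le R + 2\delta$ and $d(z, x_-) = \sigma$ yields
$$\sigma \ge M/(2\lambda) - c - R - 2\delta.$$
On the other hand, since $x_-' \in [a,b]$,
$$D = d(x,[a,b]) \le d(x,z) + d(z,x_-') \le (R + 2\delta) + (d(x_-, x_-') - \sigma) \le (R + 2\delta) + (D - \sigma),$$
forcing $\sigma \le R + 2\delta$. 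The two bounds on $\sigma$ together give $M \le 4R\lambda + 8\delta\lambda + 2c\lambda$, contradicting the hypothesis $M > 4R\lambda + 8\delta\lambda + 2c\lambda + 1$. Hence the side case cannot occur in the interior sub-case, and we conclude $D \le R + 2\delta$.

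The boundary sub-cases ($t_- = \inf I$ and/or $t_+ = \sup I$) are handled analogously: the quadrilateral degenerates to a triangle (since $x_-$ or $x_+$ already lies on $[a,b]$), and the same $\sigma$-argument applied to the remaining non-degenerate projection side still produces a contradiction with the hypothesis, in fact with more slack. When both $t_\pm$ hit the endpoints of $I$ simultaneously, $q$ itself is a $(\lambda,c)$-quasi-geodesic of parameter length $\le M$, so Theorem \ref{thm:stability} gives $D \le R$ immediately. In all cases the lemma holds with $K(\delta,\lambda,c) := R(\delta,\lambda,c) + 2\delta$. The main obstacle is the delicate balancing of constants in the slim-quadrilateral step; the exact form of the hypothesis on $M$ is precisely what is needed to make the upper and lower bounds on $\sigma$ strictly incompatible.
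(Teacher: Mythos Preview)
Your proof is correct and follows essentially the same approach as the paper: pick the point $x\in q(I)$ maximizing distance to $[a,b]$, surround it by a window of parameter length at most $M$, apply Theorem~\ref{thm:stability} to land on the chord $[x_-,x_+]$, and then use thinness of the quadrilateral $x_-x_+x_+'x_-'$ together with maximality of $d(x,[a,b])$ to exclude the two short sides. The only differences are cosmetic: the paper splits the quadrilateral into two triangles handled sequentially rather than invoking $2\delta$-slimness at once, treats the boundary sub-case by a crude quasi-geodesic upper bound on $d(x,a)$ (yielding the larger constant $K=\max\bigl(R+2\delta,\,2R\lambda^2+4\delta\lambda^2+c\lambda^2+\lambda+c\bigr)$) rather than your degenerate-triangle argument, and explicitly reduces to finite $I$ before asserting the maximum is attained.
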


\begin{proof}
By taking limits of finite intervals, we see that it suffices to consider the case when $I$ is a finite interval. Let $x \in q(I)$ be such that $d(x, [a,b]) = \max_{z\in q(I)} d(z,[a,b])$. 

{\bf Case 1}. Suppose there exists $t\in I$ such that $q(t)=x$ and the interval of length $4R\lambda+8\delta\lambda+2c\lambda+1$ centered at $t$ is not contained in $I$. It follows that
$$d(x,[a,b]) \le \min\Big(d(x,a), d(x,b)\Big) \le 2R\lambda^2+4\delta\lambda^2+c\lambda^2+ \lambda+ c.$$

{\bf Case 2}. Assume now, that there is a subinterval $I' \subset I$ centered at $x$ of length $N$ for some $N$ with $M\ge N > 4R\lambda + 8\delta\lambda +2c\lambda$. Let $y$ and $z$ be the endpoints of $q(I')$. Let $x' \in [y,z]$ be the closest point to $x$ on $[y,z]$. By the previous theorem, $d(x,x') \le R$.

Let $y'$ be the closest point to $y$ on $[a,b]$. Let $z'$ be the closest point to $z$ on $[a,b]$. By $\delta$-hyperbolicity, there exists a point $w$ in $ [z,z'] \cup [y,z]$ with $d(w,x')\le \delta$. Suppose, for a contradiction, that $w \in [z,z']$. By choice of $x$, 
$$d(z,w)+d(w,z')=d(z,z') \le d(x,z') \le d(x, x') + d(x',w) + d(w,z').$$
Hence,
$$d(z,w) \le d(x,x') + d(x',w) \le R + \delta.$$
By the triangle inequality,
$$d(z,w) \ge d(z,x) - d(x,w) \ge \lambda^{-1}N/2 -c - R - \delta.$$
Hence we obtain $N \le 4R\lambda + 4\delta\lambda +2c\lambda$, a contradiction. Thus, $w \in [y,z]$.

By $\delta$-hyperbolicity, there exists a point $w' \in [y,y'] \cup [y',z']$ with $d(w,w') \le \delta$. Arguing as above, we can show that $w' \in [y',z'] \subset [a,b]$. Hence
$$d(x, [a,b]) \le d(x,w') \le d(x,x') + d(x',w) + d(w,w') \le R + 2\delta.$$
Let $K=\max\Big(R+2\delta, 2R\lambda^2+4\delta\lambda^2+c\lambda^2+ \lambda+ c)$ to finish the proposition.
\end{proof}

\begin{lem}\label{lem:monotone}[Monotonicity]
For any $\delta \ge 0, \lambda \ge 1$ and $c\ge 0$, there exists an $M_0=M_0(\delta,\lambda,c) \ge 0$ such that the following holds. Let $(X,d)$ be a $\delta$-hyperbolic space. Let $q:I\to X$ be any $(M,\lambda,c)$-local-quasi-geodesic for some $M\ge M_0$. Let $R=R(\delta,\lambda,c)$ and $K=K(\delta,\lambda,c)$ be as in theorem \ref{thm:stability} and lemma \ref{lem:K}. Let $a,b$ be the endpoints if $q(I)$. 

Let $I' \subset I$ be a finite subinterval. Assume the length of $I'$, $l(I')$, satisfies  $M \ge l(I') \ge M_0$. Let $x, y$ be the endpoints of $q(I')$. Let $m=q(m_*)$ where $m_*$ is the midpoint of $I'$.

 Let $x',y',m'$ be points on a geodesic $[a,b]$ from $a$ to $b$ such that $d(x',x) \le K$, $d(y',y)\le K$ and $d(m',m)\le K$. Then $m'$ lies between $x'$ and $y'$. I.e., $m'$ is in the subarc of $[a,b]$ between $x'$ and $y'$.
\end{lem}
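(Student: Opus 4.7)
My plan is to argue directly: I will find a reference point $m'''\in[x',y']$ that lies close to $m$ and whose arclength position on $[a,b]$ is bounded away from $t(x')$ and $t(y')$ by roughly $L/(2\lambda)$, where $L := l(I')$; then because $m'$ lies on $[a,b]$ and is itself close to $m$, it must also lie in $[x',y']$.

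Since $L \le M$, the restriction $q|_{I'}$ is a genuine $(\lambda,c)$-quasi-geodesic from $x$ to $y$ of parametric length $L$, so I will use the midpoint bounds $d(x,m),\ d(y,m) \ge L/(2\lambda)-c$ together with $d(x,y) \ge L/\lambda - c$. For $M_0 > \lambda(c + 2K)$ this forces $d(x',y') \ge d(x,y) - 2K > 0$, so I may parametrize $[a,b]$ by arclength $t(\cdot)$ and orient it so that $t(x') < t(y')$, with $[x',y']$ denoting the intervening subarc. By Theorem \ref{thm:stability} applied to $q|_{I'}$, the midpoint $m$ lies within $R := R(\delta,\lambda,c)$ of the geodesic $[x,y]$. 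A standard two-triangle dissection of the geodesic quadrilateral with vertices $x,y,y',x'$ (two of whose sides have length at most $K$), using $\delta$-thinness twice, shows that $[x,y]$ is within Hausdorff distance $K+2\delta$ of the subsegment $[x',y'] \subset [a,b]$. Composing these two estimates yields $d(m, [x',y']) \le R + K + 2\delta$; let $m''' \in [x',y']$ be a closest point. The triangle inequality then gives
\[
d(x',m''') \ \ge\ d(x,m) - d(x,x') - d(m,m''') \ \ge\ L/(2\lambda) - c - R - 2K - 2\delta,
\]
and symmetrically for $d(y',m''')$, so $t(m''')$ sits in $[t(x'),t(y')]$ at arclength at least $L/(2\lambda) - c - R - 2K - 2\delta$ from either endpoint.

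To finish, I will observe that since $m'$ and $m'''$ both lie on the geodesic $[a,b]$, their metric and arclength distances agree, so $|t(m')-t(m''')| = d(m',m''') \le d(m',m) + d(m,m''') \le R + 2K + 2\delta$. Therefore $t(m') \in [t(x'),t(y')]$ as soon as $L/(2\lambda) - c - 2R - 4K - 4\delta \ge 0$, which is arranged by taking
\[
M_0 \ :=\ \max\bigl( 4R\lambda + 8\delta\lambda + 2c\lambda + 1,\ 2\lambda(c + 2R + 4K + 4\delta)\bigr),
\]
the first entry being the Lemma \ref{lem:K} threshold inherited through $K$. The main obstacle, and the only place I need to use $\delta$-hyperbolicity of $X$ beyond what Theorem \ref{thm:stability} already encodes, is the quadrilateral Hausdorff-distance bound; after that, everything reduces to bookkeeping with the triangle inequality and the identification of metric distance with arclength on the geodesic $[a,b]$.
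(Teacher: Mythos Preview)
Your proof is correct and follows essentially the same strategy as the paper: use stability to put $m$ near $[x,y]$, transfer via $\delta$-thinness to a reference point in the subsegment $[x',y'] \subset [a,b]$, then use the triangle inequality to conclude that $m'$ lies in the same subsegment. The only differences are cosmetic---the paper carries out the thin-triangle transfer by chasing through two explicit triangles rather than citing the quadrilateral Hausdorff bound, and closes by showing $x',y' \notin [m'',m']$ rather than by comparing arclength coordinates---so the two arguments are interchangeable.
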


\begin{proof}
Let $M_0 = 4\lambda K+4R\lambda+8\delta\lambda+2c\lambda + 2$. By theorem \ref{thm:stability}, there exists a point $m_0$ on $[x,y]$ with $d(m_0,m) \le R$.

Let $x_0$ be a point on $[x,y]$ with $d(x,x_0) = \delta$. Similarly, let $y_0$ be a point on $[x,y]$ with $d(y,y_0) = \delta$. By $\delta$-hyperbolicity, the geodesic triangle $\Delta(x,x_0,x')$ is contained in the $(K + 2\delta)$-neighborhood of $x$. Similarly, the triangle $\Delta(y,y_0,y')$ is contained in the $(K + 2\delta)$-neighborhood of $y$.

By $\delta$-hyperbolicity, there exists a point $m'_0$ in $[x_0, y'] \cup [y_0,y']$ with $d(m'_0,m_0)\le \delta$. We claim that $m'_0 \in [x_0,y']$. To see this, note that
$$d(m_0, y) \ge d(m,y)-d(m_0,m) \ge \frac{l(I')}{2\lambda} - c - R > K+ 3\delta.$$
If $m'_0 \in [y_0,y']$ then $d(m_0,y) \le d(m_0,m'_0) + d(m'_0,y) \le K+ 3\delta$, contradicting the above. So $m'_0 \in [x_0,y']$.

By $\delta$-hyperbolicity, there exists a point $m'' \in [x_0,x'] \cup [x',y']$ with $d(m'', m'_0) \le \delta$. Arguing as above, we see that $m'' \in [x',y']$. If $m'''$ is any point in $[m'',m']$ then by $\delta$-hyperbolicity (applied to the triangle $\Delta(m_0,m',m'')$), it follows that $d(m''',m_0) \le K+3\delta $.

Since 
$$d(x',m_0) \ge d(x,m) - d(m,m_0) - d(x',x) \ge \frac{l(I')}{2\lambda} - c - R - K > K + 3\delta,$$
it follows that $x' \notin [m'',m']$. Similarly, $y' \notin [m'',m']$. This proves that $m' \in [x',y']$ as claimed.

\end{proof}

\begin{thm}\label{thm:localtoglobal}
For any $\delta \ge 0, \lambda > 1$ and $c\ge 0$, there exists $M_1\ge 0$, $\lambda'=\lambda' \ge 1$ and $c'\ge 0$  such that the following holds. Let $(X,d)$ be a $\delta$-hyperbolic space. Let $q:I\to X$ be an $(M,\lambda,c)$-local-quasi-geodesic for some $M \ge M_1$. Then $q$ is a $(\lambda',2K)$-quasi-geodesic where $K$ is as in lemma \ref{lem:K}.
\end{thm}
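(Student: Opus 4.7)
The plan is to adapt the local-to-global principle from BH99 (Theorem III.H.1.13) by iterating our monotonicity lemma across a subdivision of the domain. Fix $M_0 = M_0(\delta,\lambda,c)$ from Lemma \ref{lem:monotone} and $K = K(\delta,\lambda,c)$ from Lemma \ref{lem:K}, and choose $M_1$ large enough that both $M_1 \ge 2M_0$ and $M_1 \ge 4\lambda(c + 2K)$. Let $M \ge M_1$ and $q:I \to X$ be an $(M,\lambda,c)$-local-quasi-geodesic. Pick $s < t$ in $I$, set $a = q(s)$, $b = q(t)$, and fix a geodesic segment $[a,b]$ in $X$.

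The upper bound is routine: subdivide $[s,t]$ into $N = \lceil (t-s)/M \rceil$ pieces of length at most $M$, apply the local-quasi-geodesic estimate on each, and sum, yielding $d(a,b) \le (\lambda + c/M)(t-s) + c$. Inspection of the formula defining $K$ in the proof of Lemma \ref{lem:K} shows $K \ge c$, so $c \le 2K$ and this bound fits the form $d(a,b) \le \lambda'(t-s) + 2K$ for a suitable $\lambda'$. The lower bound when $t - s \le M$ is immediate from the local property: $d(a,b) \ge (t-s)/\lambda - c \ge (t-s)/\lambda' - 2K$.

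When $t - s > M$, I would partition $[s,t]$ into $n$ equal pieces of length $\ell = (t-s)/n$, choosing $n$ so that $M_0 \le 2\ell \le M$ (possible because $M \ge 2M_0$). Writing $t_i = s + i\ell$ and $x_i = q(t_i)$, use Lemma \ref{lem:K} to pick projections $x_i' \in [a,b]$ with $d(x_i,x_i') \le K$, and take $x_0' = a$, $x_n' = b$. For each $1 \le i \le n-1$, the subinterval $[t_{i-1},t_{i+1}]$ has length $2\ell \in [M_0, M]$ with midpoint $t_i$, so Lemma \ref{lem:monotone} places $x_i'$ between $x_{i-1}'$ and $x_{i+1}'$ along $[a,b]$. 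A straightforward induction then shows $x_0',x_1',\ldots,x_n'$ appear in monotonic order on $[a,b]$.

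Since the projections are ordered with $x_0' = a$ and $x_n' = b$, we conclude
$$d(a,b) \;=\; \sum_{i=0}^{n-1} d(x_i',x_{i+1}') \;\ge\; \sum_{i=0}^{n-1}\bigl(d(x_i,x_{i+1}) - 2K\bigr) \;\ge\; n(\ell/\lambda - c - 2K) \;=\; (t-s)/\lambda - n(c+2K).$$
The estimate $n \le 2(t-s)/M + 1$ combined with $M \ge 4\lambda(c+2K)$ makes the correction $n(c+2K)$ a small fraction of $(t-s)/\lambda$, and after inflating $\lambda$ to a larger $\lambda'$ we recover a lower bound of the required form $d(a,b) \ge (t-s)/\lambda' - 2K$. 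The main obstacle is the previous paragraph: one must choose the partition so that each consecutive triple of parameter points falls into the window required by Lemma \ref{lem:monotone}, and then verify that the pairwise three-point monotonicity assertions chain together to yield a global linear ordering of the projections along $[a,b]$.
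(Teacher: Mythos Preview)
Your proposal is correct and follows essentially the same route as the paper: subdivide the parameter interval, project the division points onto $[a,b]$ via Lemma~\ref{lem:K}, apply Lemma~\ref{lem:monotone} to each consecutive triple to order the projections monotonically, and then telescope. You are in fact slightly more careful than the paper in taking equal-length pieces (so that $t_i$ is genuinely the midpoint of $[t_{i-1},t_{i+1}]$, as Lemma~\ref{lem:monotone} requires) and in checking that $2\ell$ lands in the window $[M_0,M]$; the paper leaves these points implicit.
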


\begin{proof}
Let $R=R(\delta,\lambda,c)$, $K=K(\delta,\lambda,c)$, $M_0=M_0(\delta,\lambda,c)$ be as in theorem \ref{thm:stability}, lemma \ref{lem:K} and the previous lemma. Choose $M_1$ so that $M_1 \ge M_0$ and $\lambda^{-1} - \frac{2c+4K}{M_1} >0$. Choose $\lambda'$ so that $\frac{1}{\lambda'} \le \lambda^{-1} - \frac{2c+4K}{M_1}$ and $\lambda ' \ge \lambda + \frac{2c}{M_1}$. Let $c'=4K+2$.

 Let $a,b \in I$ be such $b>a$. Let $a=x_0<x_1<\ldots <x_n = b$ be a subdivision into $n \le \frac{2(b-a)}{M_1} + 1$ subintervals, each of length at most $\frac{M}{2}$. By lemma \ref{lem:K}, for each $i$ there exists a point $x'_i \in [q(a),q(b)]$ with $d(x'_i, x_i) \le K$. By the previous lemma, $x'_i \in [x'_{i-1},x'_{i+1}]$ for all $n-1\ge i\ge 1$. Thus,
\begin{eqnarray*}
d\big(q(a),q(b)\big) &\ge& d(x'_0,x'_n) - 2K= -2K + \sum_{i=1}^n d(x'_i,x'_{i-1})\\
&\ge& -2K +  \sum_{i=1}^n d\big(q(x_i),q(x_{i-1})\big) - 2K\ge -2K +  \sum_{i=1}^n \lambda^{-1}|x_i-x_{i-1}| -c - 2K\\
&=& \lambda^{-1}|b-a| - cn - 2K(n+1)\ge |b-a|\Big( \lambda^{-1} - \frac{2c+4K}{M_1}\Big) - 4K -c\\
&\ge& \frac{|b-a|}{\lambda'} - c'.
\end{eqnarray*}
On the other hand,
\begin{eqnarray*}
d\big(q(a),q(b)\big) &\le& \sum_{i=1}^n d\big(q(x_i),q(x_{i-1})\big)\le \sum_{i=1}^n\lambda |x_i-x_{i-1}| +c = \lambda|a-b| + cn\\
&\le& |a-b|\Big(\lambda + \frac{2c}{M_1}\Big) + c \le \lambda' |a-b| + c'.
\end{eqnarray*}
 \end{proof}

\begin{defn}
Let $X, Y$ be metric spaces and $0 \le M, \lambda \ge 1, c\ge 0$. A map $q:Y \to X$ is said to be an {\it $(M,\lambda,c)$-local-quasi-isometric} embedding if for all $y \in Y$, $q$ restricted to the ball of radius $M$ centered at $y$ is a $(\lambda,c)$-quasi-isometric embedding.
\end{defn}

\begin{cor}\label{cor:localqi}
Let $M_1,\lambda', c'$ be as in lemma \ref{lem:K} and the previous theorem. Let $(X,d)$ be a $\delta$-hyperbolic space and $(Y,d_Y)$ a geodesic space. If $M \ge M_1$ and $q:Y \to X$ is an $(M,\lambda,c)$-local-quasi-isometric embedding then $q$ is a $(\lambda',c')$-quasi-isometric embedding.
\end{cor}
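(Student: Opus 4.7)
The plan is to reduce the corollary to Theorem \ref{thm:localtoglobal} by composing $q$ with geodesics in $Y$. Fix arbitrary $y_1, y_2 \in Y$ and set $L = d_Y(y_1, y_2)$. Because $Y$ is a geodesic space, there is a geodesic $\gamma : [0, L] \to Y$ from $y_1$ to $y_2$. I would form the composition $q \circ \gamma : [0, L] \to X$ and show that it is an $(M, \lambda, c)$-local-quasi-geodesic.

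To verify this, fix any subinterval $[a, b] \subseteq [0, L]$ with $b - a \leq M$ and any two parameters $s, t \in [a, b]$. Since $\gamma$ is an isometric parametrization, $d_Y(\gamma(s), \gamma(a)) = |s - a| \leq M$ and $d_Y(\gamma(t), \gamma(a)) = |t - a| \leq M$, so both $\gamma(s)$ and $\gamma(t)$ lie in the ball of radius $M$ about $\gamma(a)$. The local-quasi-isometric embedding hypothesis on $q$ then yields
\[ \lambda^{-1}|s - t| - c \ \leq \ d_X\bigl(q \circ \gamma(s),\, q \circ \gamma(t)\bigr) \ \leq \ \lambda|s - t| + c, \]
using $d_Y(\gamma(s), \gamma(t)) = |s - t|$. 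Thus the restriction of $q \circ \gamma$ to $[a, b]$ is a $(\lambda, c)$-quasi-isometric embedding. Since $[a,b]$ was an arbitrary subinterval of length at most $M$, the map $q \circ \gamma$ is an $(M, \lambda, c)$-local-quasi-geodesic on $[0, L]$.

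Since $M \geq M_1$, Theorem \ref{thm:localtoglobal} now applies and gives that $q \circ \gamma$ is a $(\lambda', c')$-quasi-geodesic. Specializing the quasi-geodesic inequalities to the endpoints $0$ and $L$ of the domain yields
\[ (\lambda')^{-1} d_Y(y_1, y_2) - c' \ \leq \ d_X\bigl(q(y_1), q(y_2)\bigr) \ \leq \ \lambda' \, d_Y(y_1, y_2) + c', \]
which is the desired conclusion. There is no real obstacle here: the essential work is already done in Theorem \ref{thm:localtoglobal}, and the only care required is the routine observation that the local-quasi-isometric embedding hypothesis on metric balls of radius $M$ in $Y$ transfers, along the isometric parametrization $\gamma$, to a local-quasi-geodesic hypothesis on subintervals of length at most $M$. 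Note also that $q$ need not be continuous, but this is irrelevant since both the hypothesis and the conclusion of Theorem \ref{thm:localtoglobal} are purely metric.
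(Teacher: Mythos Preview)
Your proof is correct and follows essentially the same approach as the paper: compose $q$ with a geodesic in $Y$, observe that the composition is an $(M,\lambda,c)$-local-quasi-geodesic, and invoke Theorem \ref{thm:localtoglobal}. You have simply spelled out the verification that $q\circ\gamma$ is a local-quasi-geodesic, which the paper states without proof.
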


\begin{proof}
If $\gamma:I \to Y$ is a geodesic then $q \circ \gamma$ is an $(M,\lambda,c)$-local-quasi-geodesic. The theorem above implies that $q\circ \gamma$ is a $(\lambda',c')$-quasi-geodesic. Since this is true for all $\gamma$, $q$ is a $(\lambda',c')$-quasi-isometry.
\end{proof}

\subsection{Perturbations and quasi-isometric embeddings}

In this subsection, we take the first step in proving theorem \ref{thm:asymptotic} by showing that if $\epsilon>0$ is sufficiently small, then $\phi_\epsilon$ is a quasi-isometric embedding. 

\begin{defn}
Let $H$ be an abstract group with finite symmetric generating set $S$. The {\it Cayley graph} $\sC$ of $H$ induced by $S$ is the graph with vertex set $H$ and so that for every $h\in H$ and $s\in S$ there is an edge from $h$ to $hs$. Let each edge be isometric with the unit interval and let $d_S$ denote the resulting path metric. This makes $\sC$ a geodesic space. 
\end{defn}
From now on, fix $H, S$ as above. Let $(X,d)$ be a $\delta$-hyperbolic space. Let $\phi:H \to \Isom(X)$ be an injective homomorphism onto a quasi-convex cocompact subgroup. Let $d_{\Isom(X)}$ be a left-invariant metric on $\Isom(X)$ inducing the topology of uniform convergence on compact sets.


\begin{lem}\label{lem:Hqi}
Let $p\in X$. Define $\pi_p:H \to X$ by $\pi_p(h) = \phi(h)p$. Then $\pi_p$ is a quasi-isometric embedding.
\end{lem}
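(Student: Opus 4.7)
The strategy is the Svarc--Milnor lemma adapted to the quasi-convex cocompact setting. By $\phi$-equivariance and the fact that each $\phi(h)$ is an isometry, $d(\pi_p(h_1),\pi_p(h_2))=d(p,\phi(h_1^{-1}h_2)p)$, so it suffices to find constants $\lambda\ge 1,c\ge 0$ with
\[
\lambda^{-1}|h|_S - c \;\le\; d(p,\phi(h)p) \;\le\; \lambda|h|_S + c
\]
for every $h\in H$, where $|h|_S$ denotes the word length with respect to $S$.

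The upper bound is immediate. Put $C_1:=\max_{s\in S}d(p,\phi(s)p)$, which is finite since $S$ is finite. For $h=s_1\cdots s_n$ with $n=|h|_S$ and each $s_i\in S$, the triangle inequality together with the isometric action gives
\[
d(p,\phi(h)p)\le\sum_{i=1}^n d\bigl(\phi(s_1\cdots s_{i-1})p,\,\phi(s_1\cdots s_i)p\bigr)=\sum_{i=1}^n d(p,\phi(s_i)p)\le C_1 n.
\]

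For the lower bound I would use quasi-convex cocompactness. Let $A\subset X$ be an $\epsilon_0$-quasi-convex $\phi(H)$-invariant subset with $A/\phi(H)$ compact; after replacing $p$ by a nearby point of $A$ (which alters $d(p,\phi(h)p)$ by a uniformly bounded amount, harmless for the quasi-isometric embedding property), I may assume $p\in A$. By cocompactness there exists $D\ge 0$ such that $A\subset \sN_D(\phi(H)p)$. Given $h\in H$, parametrize a geodesic $\gamma:[0,T]\to X$ from $p$ to $\phi(h)p$, where $T:=d(p,\phi(h)p)$. Quasi-convexity puts $\gamma$ inside $\sN_{\epsilon_0}(A)$. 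Subdivide $[0,T]$ into $n\le \lceil T\rceil + 1$ subintervals of length at most $1$ with endpoints $0=t_0<t_1<\cdots<t_n=T$, and choose $g_i\in H$ with $d(\gamma(t_i),\phi(g_i)p)\le D+\epsilon_0$, taking $g_0=e$ and $g_n=h$ exactly (the endpoints already lie on the orbit). Then
\[
d(\phi(g_{i-1})p,\phi(g_i)p)\le 1+2(D+\epsilon_0),
\]
so each $g_{i-1}^{-1}g_i$ lies in the set $F:=\{g\in H:d(p,\phi(g)p)\le 1+2(D+\epsilon_0)\}$, which is finite because the $\phi(H)$-action on $X$ is properly discontinuous. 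Setting $M:=\max_{g\in F}|g|_S$, the factorization $h=(g_0^{-1}g_1)(g_1^{-1}g_2)\cdots(g_{n-1}^{-1}g_n)$ yields $|h|_S\le Mn\le M(T+2)$, which is the desired lower bound.

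The main technical point is the interplay between quasi-convexity (controlling where the connecting geodesic lives) and cocompactness (controlling how densely $\phi(H)p$ populates $A$); proper discontinuity then converts a geometric distance bound in $X$ into finiteness of $F$, which is what turns the count of subdivision points into a word-length estimate on $h$. No Gromov-hyperbolicity of $X$ is needed for this particular lemma; it would enter in later steps when the geodesic is replaced by a quasi-geodesic and theorems \ref{thm:stability} and \ref{thm:localtoglobal} are applied.
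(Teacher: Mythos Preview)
Your argument is correct and is essentially the paper's: both invoke the \v Svarc--Milnor lemma for the orbit map at a point of the quasi-convex set $A$, then absorb the change of basepoint into the additive constant. The only difference is that the paper cites \v Svarc--Milnor as a black box for a point $a\in A$ while you unroll its proof directly in $X$, which has the minor advantage of making explicit how quasi-convexity of $A$ stands in for $A$ being a geodesic space.
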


\begin{proof}
Let $A \subset X$ be a $\phi(H)$-invariant quasi-convex subset such that $A/\phi(H)$ is compact. Let $a \in A$. By the \v Svarc-Milnor lemma, the map $h \mapsto \phi(h)a$ is a $(\lambda,c)$-quasi-isometric embedding for some $\lambda \ge 1, c \ge 0$. This lemma was discovered in the fifties [Ef53, Sv55] and rediscovered by Milnor [Mi68, lemma 2]. It is also proven in [BH99]. 

For any $h,g \in H$,
\begin{eqnarray*}
\Big|d\big( \phi(h)p, \phi(g)p \big) - d\big( \phi(h)a, \phi(g)a \big)\Big| \le d\big( \phi(h)p, \phi(h)a \big) + d\big( \phi(g)a, \phi(g)p \big)=2d(a,p).
\end{eqnarray*}
Since $\lambda^{-1} d_S(h,g) - c \le  d\big( \phi(h)a, \phi(g)a \big) \le \lambda d_S(h,g) + c$, this implies
$$\lambda^{-1} d_S(h,g) - c - 2d(a,p)\le  d\big( \phi(h)p, \phi(g)p \big) \le \lambda d_S(h,g) + c +2d(a,p).$$
Hence $h \mapsto \phi(h)p$ is a $(\lambda, c+ 2d(p,a))$-quasi-isometric embedding.
\end{proof}

\begin{lem}\label{lem:local}
For any $N, \sigma> 0$ and any $p \in X$, there exists an $\epsilon_0>0$ such that the following holds. If $0\le\epsilon\le\epsilon_0$ and $\phi_\epsilon:H \to G$ is an $\epsilon$-perturbation of $\phi$ then for all $g,h \in H$ with $d_S(g,h)\le N$, 
$$\Big|d\big( \phi(g)p, \phi(h)p\big) - d\big(\phi_\epsilon(g)p, \phi_\epsilon(h)p\big) \Big| \le \sigma.$$
\end{lem}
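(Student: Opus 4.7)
The plan is to use the $\epsilon$-perturbation condition to decompose $\phi_\epsilon(g)^{-1}\phi_\epsilon(h)$ as a product of isometries that are individually close to the corresponding values of $\phi$, and then invoke continuity of the action on a compact set to convert closeness in $G=\Isom(X)$ into closeness in $X$.

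First, fix $g,h \in H$ with $d_S(g,h) \le N$ and write $g^{-1}h = t_1 t_2 \cdots t_k$ with $t_i \in S$ and $k \le N$. Set $h_0 = g$ and $h_i = g t_1 \cdots t_i$, so that $h_k = h$. Applying the $\epsilon$-perturbation inequality with $f = h_{i-1}$ and $s = t_i$ gives $d_{\Isom(X)}(\phi_\epsilon(h_i),\, \phi_\epsilon(h_{i-1})\phi(t_i)) \le \epsilon$. By left-invariance of $d_{\Isom(X)}$, the quantities $\psi_i := \phi_\epsilon(h_{i-1})^{-1}\phi_\epsilon(h_i)$ satisfy $d_{\Isom(X)}(\psi_i, \phi(t_i)) \le \epsilon$. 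Telescoping,
$$\phi_\epsilon(g)^{-1}\phi_\epsilon(h) = \psi_1 \psi_2 \cdots \psi_k.$$

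Since $\phi_\epsilon(g)$ and $\phi(g)$ are isometries of $X$,
$$d(\phi_\epsilon(g)p,\phi_\epsilon(h)p) = d(p, \psi_1\cdots\psi_k\, p), \qquad d(\phi(g)p,\phi(h)p) = d(p, \phi(t_1)\cdots\phi(t_k)\, p).$$
By the triangle inequality, the quantity to be bounded is at most
$$d\big(\psi_1\cdots\psi_k\, p,\ \phi(t_1)\cdots\phi(t_k)\, p\big).$$
So it suffices to make this last distance less than $\sigma$.

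Because $d_{\Isom(X)}$ is a left-invariant metric inducing the topology of uniform convergence on compact sets, $G=\Isom(X)$ is a topological group and the evaluation map $\alpha \mapsto \alpha p$ is continuous; consequently, for each fixed tuple $(t_1,\ldots,t_k) \in S^k$ the map $(\alpha_1,\ldots,\alpha_k) \mapsto \alpha_1\cdots\alpha_k\, p$ is continuous at $(\phi(t_1),\ldots,\phi(t_k))$. Hence there exists $\epsilon(t_1,\ldots,t_k) > 0$ such that whenever $d_{\Isom(X)}(\psi_i,\phi(t_i)) \le \epsilon(t_1,\ldots,t_k)$ for all $i$, the distance above is at most $\sigma$. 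Since $S$ is finite and $k$ ranges over $\{0,1,\ldots,N\}$, there are only finitely many such tuples, so we can define $\epsilon_0$ to be the minimum of $\epsilon(t_1,\ldots,t_k)$ over this finite set. Any $\epsilon \le \epsilon_0$ then yields the required bound for every pair $g,h$ with $d_S(g,h)\le N$. No serious obstacle arises; the argument is a routine continuity-plus-finiteness calculation, and its only purpose is to prepare for the more delicate local-quasi-isometry estimate in the next stage.
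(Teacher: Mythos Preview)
Your proof is correct and follows essentially the same approach as the paper's: both factor $\phi_\epsilon(g)^{-1}\phi_\epsilon(h)$ as a product $\psi_1\cdots\psi_k$ with each $\psi_i$ within $\epsilon$ of $\phi(t_i)$ (using left-invariance of $d_{\Isom(X)}$), reduce to comparing $d(p,\psi_1\cdots\psi_k p)$ with $d(p,\phi(t_1)\cdots\phi(t_k)p)$, and then choose $\epsilon_0$ by continuity over the finitely many words of length at most $N$ in $S$. The paper is slightly terser in that it simply asserts the existence of $\epsilon_0$ satisfying the required inequality, whereas you spell out the continuity-plus-finiteness argument explicitly; otherwise the arguments are the same.
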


\begin{proof}
Let $\epsilon_0>0$ be such that if $m \le N$, $s_1,\ldots,s_m \in S$ and $s'_1,\ldots,s'_m \in \Isom(X)$ are such that $d_{\Isom(X)}(s'_i , \phi(s_i)) \le \epsilon_0$ then
$$\Big|d(s'_1 \cdots s'_m p, p) -d\big(\phi(s_1)\cdots \phi(s_m)p,p\big)\Big| \le \sigma.$$

Let $0\le \epsilon \le \epsilon_0$ and let $\phi_\epsilon$ be an $\epsilon$-perturbation of $\phi$. Let $g,h \in H$ with $d_S(g,h) \le N$. So $g=hs_1s_2\cdots s_m$ for some $s_i \in S$ and $m\le N$. Because $\phi_\epsilon$ is an $\epsilon$-perturbation of $\phi$, there exist elements $s'_{i}\in G$ with $d_{Isom(X)}(s'_{i},\phi(s_i)) \le \epsilon$ and $\phi_\epsilon(g)=\phi_\epsilon(h)s'_{1}\cdots s'_{m}$. Thus
$$\Big|d\big( \phi(g)p, \phi(h)p\big) - d\big(\phi_\epsilon(g)p, \phi_\epsilon(h)p\big) \Big| = \Big|d\big(\phi(s_1)\cdots \phi(s_m)p,p\big)- d\big(s'_{1} \cdots s'_{m} p, p\big)\Big| \le \sigma.$$

\end{proof}

\begin{prop}\label{prop:qi}
Let $p\in X$. There exists an $\epsilon_0>0, \lambda'\ge 1$ and $c'\ge 0$ such that if $0 \le \epsilon \le \epsilon_0$ and $\phi_\epsilon:H \to \Isom(X)$ is any $\epsilon$-perturbation of $\phi$, then the map $h \mapsto \phi_\epsilon(h)p$ is a $(\lambda',c')$-quasi-isometric embedding of $H$ into $X$.
\end{prop}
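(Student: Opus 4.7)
\medskip

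\noindent\textbf{Proof proposal.} The plan is to apply Corollary \ref{cor:localqi} (the local-to-global principle for quasi-isometric embeddings) to a natural extension of $\phi_\epsilon$ to the Cayley graph $\sC$ of $H$, after arranging that on balls of a sufficiently large radius $M \ge M_1$ the perturbation is a quasi-isometric embedding with constants only slightly worse than those of $\phi$.

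First, extend the map $\pi^\epsilon_p : H \to X$ defined by $\pi^\epsilon_p(h) = \phi_\epsilon(h)p$ to a map $\tilde \pi^\epsilon_p : \sC \to X$ by choosing, for each edge of $\sC$, a geodesic in $X$ connecting the images of its endpoints and parametrizing it proportionally to arc length. Do the same for $\phi$ to get $\tilde\pi_p$. By Lemma \ref{lem:Hqi}, $\tilde\pi_p$ is a $(\lambda,c_0)$-quasi-isometric embedding for some $\lambda \ge 1, c_0 \ge 0$ (the linear interpolation step adds at most an additive constant to $c$ because each edge of $\sC$ is unit length and its image has length $d(\phi(h)p,\phi(hs)p)$, which is uniformly bounded). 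Let $M_1 = M_1(\delta,\lambda,c_0+1)$ be the constant from Corollary \ref{cor:localqi}, applied with hyperbolicity constant $\delta$ of $X$.

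Second, I apply Lemma \ref{lem:local} with $N = M_1 + 2$ and $\sigma = 1$ to obtain $\epsilon_0 > 0$ such that for every $\epsilon$-perturbation $\phi_\epsilon$ with $\epsilon \le \epsilon_0$ and all $g,h \in H$ with $d_S(g,h) \le N$,
\[
\bigl| d(\phi_\epsilon(g)p,\phi_\epsilon(h)p) - d(\phi(g)p,\phi(h)p)\bigr| \le 1.
\]
Shrinking $\epsilon_0$ further (using left-invariance of $d_{\Isom(X)}$), the images of adjacent vertices of $\sC$ under $\phi_\epsilon$ remain within a uniformly bounded distance, so the interpolated map $\tilde\pi^\epsilon_p$ is well-defined with edges of uniformly bounded length.

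Third, combining the inequality above with the $(\lambda,c_0)$-quasi-isometry property of $\tilde\pi_p$ restricted to any ball of radius $M_1$ in $\sC$, we conclude that $\tilde\pi^\epsilon_p$ restricted to any ball of radius $M_1$ in $\sC$ is a $(\lambda, c_0 + 1)$-quasi-isometric embedding, possibly after adjusting the constants by $O(1)$ to accommodate the interpolation on edges. Hence $\tilde\pi^\epsilon_p$ is an $(M_1,\lambda,c_0+1)$-local-quasi-isometric embedding of the geodesic space $\sC$ into the $\delta$-hyperbolic space $X$. Corollary \ref{cor:localqi} then yields constants $\lambda' \ge 1$ and $c' \ge 0$ depending only on $\delta,\lambda,c_0$ (and not on $\phi_\epsilon$) such that $\tilde\pi^\epsilon_p$ is a $(\lambda',c')$-quasi-isometric embedding. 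Restricting to the vertex set $H$ of $\sC$ gives the desired conclusion.

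The main obstacle is checking that the \emph{local} quasi-isometry property truly holds after the perturbation, which is exactly where Lemma \ref{lem:local} is needed: an $\epsilon$-perturbation controls only the one-step error between $\phi_\epsilon(fs)$ and $\phi_\epsilon(f)\phi(s)$, so uniform control of $d(\phi_\epsilon(g)p,\phi_\epsilon(h)p)$ over a ball of radius $M_1$ requires iterating this estimate along a word of length $\le M_1$, which is why the quantifier order (fix $M_1$ first, then choose $\epsilon_0$) is crucial.
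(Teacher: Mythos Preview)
Your proposal is correct and follows essentially the same route as the paper: use Lemma~\ref{lem:Hqi} to get quasi-isometry constants for the unperturbed map, invoke Lemma~\ref{lem:local} to make the perturbed map a local quasi-isometric embedding at scale $M_1$, extend to the Cayley graph, and apply Corollary~\ref{cor:localqi}. The only cosmetic differences are that the paper extends to $\sC$ by collapsing each edge to an endpoint rather than by geodesic interpolation, and it fixes the edge-extension penalty in advance (taking $M_1=M_1(\delta,\lambda+\sigma,c+\sigma+2\lambda+2)$) rather than, as you do, choosing $M_1=M_1(\delta,\lambda,c_0+1)$ and then saying ``possibly after adjusting the constants by $O(1)$''---strictly speaking you should build that $O(1)$ into your choice of $M_1$ from the start, since $M_1$ depends on those constants.
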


\begin{proof}
By lemma \ref{lem:Hqi}, the map $h \mapsto \phi(h)p$ is a $(\lambda,c)$-quasi-isometric embedding for some $\lambda \ge 1, c\ge 0$. Let $\sigma \ge 0$. Let $M \ge M_1$ where $M_1=M_1(\delta,\lambda+\sigma,c+\sigma+2\lambda+2)$ is as defined in corollary \ref{cor:localqi}.

By the previous lemma, it follows that there exists an $\epsilon_0$ such that if $0\le \epsilon \le \epsilon_0$ and $\phi_\epsilon:H \to \Isom(X)$ is an $\epsilon$-perturbation of $\phi$, then the map $h \mapsto \phi_\epsilon(h)p$ is a $(M,\lambda+\sigma,c+\sigma)$-local-quasi-isometric embedding. We can extend this map to the Cayley graph $\sC$. For example, for each edge we could choose an endpoint and map the entire edge to the image of that endpoint. The resulting map is an $(M,\lambda+\sigma,c+\sigma+2\lambda +2)$-local-quasi-isometric embedding. By corollary \ref{cor:localqi}, $\phi_\epsilon$ is a $(\lambda',c')$-quasi-isometry for some constants $\lambda'\ge 1, c'\ge 0$.
\end{proof}

\subsection{Bi-Lipschitz maps}




Here we conclude that the map $\phi(h)p \mapsto \phi_\epsilon(h)p$ is bi-Lipschitz with constant that tends to $1$ as $\epsilon$ tends to $0$. This result is the key ingredient to proving theorem \ref{thm:asymptotic}. We need the next lemma.

\begin{lem}\label{lem:rho}
Let $p\in X$. Then there exists a $\rho>0$ such that if $g,h \in H$ and $g \ne h$ then $d\big( \phi(g)p, \phi(h)p \big) \ge \rho$.
\end{lem}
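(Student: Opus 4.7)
The plan is to use the quasi-isometric embedding of Lemma \ref{lem:Hqi} to reduce the desired uniform lower bound to a minimum over a finite set.

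First I would invoke Lemma \ref{lem:Hqi} to obtain constants $\lambda\ge 1$ and $c\ge 0$ such that the orbit map $\pi_p\colon h\mapsto \phi(h)p$ is a $(\lambda,c)$-quasi-isometric embedding of $(H,d_S)$ into $(X,d)$. Because each $\phi(g)$ is an isometry, setting $f=g^{-1}h$ shows that the conclusion is equivalent to $\inf_{f\in H\setminus\{e\}}d\big(p,\phi(f)p\big)>0$. The quasi-isometric lower bound $d\big(p,\phi(f)p\big)\ge \lambda^{-1}|f|_S-c$ already yields $d\big(p,\phi(f)p\big)\ge 1$ whenever $|f|_S\ge \lambda(c+1)$, so only the finitely many $f$ in the word-ball $B=\{f\in H\setminus\{e\}:|f|_S<\lambda(c+1)\}$ remain to be controlled.

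For each $f\in B$, injectivity of $\phi$ guarantees that $\phi(f)$ is a non-trivial isometry, and the freeness of the (properly discontinuous) quasi-convex cocompact action of $\phi(H)$ at $p$ gives $\phi(f)p\ne p$, hence $d\big(p,\phi(f)p\big)>0$. Setting
\[
\rho:=\min\Bigl\{1,\ \min_{f\in B}d\big(p,\phi(f)p\big)\Bigr\}
\]
then provides a strictly positive constant with the required property.

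The one subtlety I expect to be the main obstacle is the claim that $\phi(f)p\ne p$ for $f\ne e$: a priori an isometry of $X$ may fix a point without being the identity, so one must know that the point stabilizer of $p$ in $\phi(H)$ is trivial. This is implicit in the setting of Theorem \ref{thm:asymptotic}, where the injective quasi-convex cocompact actions of interest (for example, by free or surface subgroups of $\Isom(\mathbb{H}^n)$) are free; absent such an assumption one would only get $d\big(p,\phi(f)p\big)=0$ for elements of the finite point-stabilizer and the lemma as stated would fail.
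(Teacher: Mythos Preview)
Your proof is correct and follows essentially the same route as the paper: use the $(\lambda,c)$-quasi-isometric embedding of Lemma~\ref{lem:Hqi} to handle all pairs at word-distance beyond a fixed threshold, and then take the minimum over the remaining finite ball. The subtlety you flag about the point stabilizer of $p$ is in fact also left unaddressed in the paper's own proof, which simply sets $\rho_0=\min\{d(p,\phi(g)p):d_S(g,id)\le N,\ g\ne id\}$ without checking positivity; as you note, this is harmless in the intended torsion-free applications.
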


\begin{proof}
By lemma \ref{lem:Hqi}, there exists $\lambda \ge 1$ and $c \ge 0$ such that the map $f \mapsto \phi(f)p$ is a $(\lambda,c)$-quasi-isometric embedding of $H$ into $X$. Let $N$ be an integer such that $\lambda^{-1}N - c \ge 1$. Let 
$$\rho_0 = \min \big \{d(p,\phi(g)p) ~|~ d_S(g,id)\le N, ~g \ne id\big\}.$$  
Let $g,h \in H$ with $g\ne h$. If $d_S(g,h) \le N$ then $d\big( \phi(g)p, \phi(h)p \big) =  d\big( \phi(h^{-1}g)p, p \big) \ge \rho_0.$ If $d_S(g,h) >N$ then, since the map $f \mapsto \phi(f)p$ is a $(\lambda,c)$-quasi-isometry,
 $d\big( \phi(g)p, \phi(h)p \big) \ge \lambda^{-1}N - c \ge 1.$ Set $\rho=\min(1,\rho_0)$ to finish the lemma.
\end{proof}

\begin{prop}\label{prop:bilip}
Let $p\in X$. Let $\kappa >1$. Then there exists an $\epsilon_0 >0$ such that if $0\le \epsilon \le \epsilon_0$ and $\phi_\epsilon:H \to \Isom(X)$ is an $\epsilon$-perturbation of $\phi$, then for all $g,h \in H$,
$$\kappa^{-1} d\big( \phi(h)p, \phi(g)p \big) \le d\big( \phi_\epsilon(h)p, \phi_\epsilon(g)p \big) \le \kappa d\big( \phi(h)p, \phi(g)p \big).$$
\end{prop}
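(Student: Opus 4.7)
The plan is to split the argument according to the word-length distance $n:=d_S(g,h)$ into a short-range case $n\le L_0$ and a long-range case $n>L_0$, for a threshold $L_0$ fixed below.

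In the short-range case I would apply Lemma \ref{lem:local} with $N=L_0$ and a small prescribed $\sigma>0$ to obtain an $\epsilon_0$ beyond which $|d(\phi(g)p,\phi(h)p)-d(\phi_\epsilon(g)p,\phi_\epsilon(h)p)|\le \sigma$. Lemma \ref{lem:rho} guarantees $d(\phi(g)p,\phi(h)p)\ge \rho$ whenever $g\ne h$, so the ratio $d(\phi_\epsilon(g)p,\phi_\epsilon(h)p)/d(\phi(g)p,\phi(h)p)$ lies in $[1-\sigma/\rho,1+\sigma/\rho]\subset[\kappa^{-1},\kappa]$ as soon as $\sigma\le \rho(1-\kappa^{-1})$, which determines how small to take $\epsilon$.

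In the long-range case a naive triangle-inequality chain through the orbit introduces a multiplicative error on the order of $\lambda\lambda'$ (the product of the two QI constants from Lemma \ref{lem:Hqi} and Prop \ref{prop:qi}), which is bounded but not close to $1$, so a direct approach cannot give a bi-Lipschitz constant tending to $1$. Instead I would invoke the stability-of-quasi-geodesics machinery developed in Subsection 5.2. Subdivide a Cayley-graph geodesic from $g$ to $h$ into roughly $k\approx n/L$ pieces of length $L$, with endpoints $g=h_0,h_1,\dots,h_k=h$. The point sequences $\phi(h_0)p,\dots,\phi(h_k)p$ and $\phi_\epsilon(h_0)p,\dots,\phi_\epsilon(h_k)p$ extend (by interpolating along geodesic segments in $X$) to quasi-geodesics from $\phi(g)p$ to $\phi(h)p$ and from $\phi_\epsilon(g)p$ to $\phi_\epsilon(h)p$ whose constants come from Lemma \ref{lem:Hqi} and Prop \ref{prop:qi}. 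Theorem \ref{thm:stability} places each within Hausdorff distance $R$, resp.\ $R'$, of a true geodesic $\gamma$, resp.\ $\gamma'$, between its endpoints, and Lemma \ref{lem:monotone} lets me choose monotone projections $0=\pi_0\le\pi_1\le\cdots\le\pi_k=D$ and $0=\pi'_0\le\pi'_1\le\cdots\le\pi'_k=D'$ (writing $D,D'$ for the two distances in the statement). The triangle inequality gives $|d(\phi(h_{j-1})p,\phi(h_j)p)-(\pi_j-\pi_{j-1})|\le 2R$ and its analogue with $R'$, and Lemma \ref{lem:local} applied at scale $L$ yields $|d(\phi(h_{j-1})p,\phi(h_j)p)-d(\phi_\epsilon(h_{j-1})p,\phi_\epsilon(h_j)p)|\le\sigma$; combining, $|(\pi_j-\pi_{j-1})-(\pi'_j-\pi'_{j-1})|\le 2R+2R'+\sigma$, and summing gives $|D-D'|\le k(2R+2R'+\sigma)$. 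Since Lemma \ref{lem:Hqi} implies $k\le \lambda(D+c)/L+1$, I get a relative bound of the form $|D-D'|/D\le \lambda(2R+2R'+\sigma)/L+O(1/D)$. Fixing $L$ large enough that $\lambda(2R+2R')/L$ is a small fraction of $1-\kappa^{-1}$, then $\sigma$ small enough that $\lambda\sigma/L$ is too, then $L_0$ large enough that the residual $O(1/D)$ piece is under control whenever $n>L_0$, gives $|D-D'|/D\le 1-\kappa^{-1}$ and hence $D'\in[\kappa^{-1}D,\kappa D]$.

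The main obstacle is the long-range case. Because $d_{\Isom(X)}$ only controls isometries via uniform convergence on compact sets, compounded perturbation errors $s'_1\cdots s'_n$ versus $\phi(s_1)\cdots\phi(s_n)$ can displace points far from $p$ by arbitrarily much, so a step-by-step error analysis of the orbit maps cannot produce a bi-Lipschitz constant close to $1$. The quasi-geodesic stability approach works because it replaces compounded isometry errors by geometric projections onto true geodesics, where distances are strictly additive: the per-step error then depends only on the fixed stability constants $R,R'$ and on the local perturbation scale $\sigma$, and the overall relative error is controlled by the freely chosen subdivision scale $L$, which is fixed before $\epsilon$.
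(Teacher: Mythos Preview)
Your approach is correct and shares the paper's overall strategy: subdivide a Cayley geodesic from $g$ to $h$ into pieces of bounded length, apply Lemma~\ref{lem:local} on each piece, and then absorb the resulting additive error into the multiplicative constant using the uniform lower bound $\rho$ from Lemma~\ref{lem:rho}. The paper does not separate into short- and long-range cases; it runs the subdivision argument uniformly, writing (with $n$ pieces of length at most $N$)
\[
\Big| d(\phi_\epsilon(g)p,\phi_\epsilon(h)p)-d(\phi(g)p,\phi(h)p)\Big|\ \le\ \sum_{i=1}^n \Big| d(\phi_\epsilon(q(t_i))p,\phi_\epsilon(q(t_{i-1}))p)-d(\phi(q(t_i))p,\phi(q(t_{i-1}))p)\Big|\ \le\ n\sigma,
\]
and then uses the quasi-isometry constants to convert $n$ into a multiple of $d(\phi(g)p,\phi(h)p)$. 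Your route differs in that you justify the combining step via Theorem~\ref{thm:stability} and Lemma~\ref{lem:monotone}: projecting the two orbit sequences onto genuine geodesics makes the increments \emph{additive}, which is exactly what is needed to telescope. This is more than cosmetic, since the displayed inequality above is not a general consequence of the triangle inequality (one only has $D\le\sum D_i$ and $D'\le\sum D'_i$, which do not combine to bound $|D-D'|$); what really makes it work is that both orbit sequences are global quasi-geodesics, and your stability/monotonicity argument is precisely the mechanism that extracts this. The cost is a larger per-piece error $2R+2R'+\sigma$ in place of $\sigma$, but since $R,R'$ are fixed and $L$ is chosen afterwards, this is harmless. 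One small point: to get genuinely monotone projections from Lemma~\ref{lem:monotone} you should take $L$ large enough that consecutive projected points are distinct (so that ``between'' becomes ``strictly between''), after which monotonicity follows by the obvious induction; this is implicit in how the paper uses the same lemma in the proof of Theorem~\ref{thm:localtoglobal}.
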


\begin{proof}

By proposition \ref{prop:qi}, there exists an $\epsilon_1>0, \lambda' \ge 1, c'\ge 0$ such that if $0\le \epsilon\le\epsilon_1$ then map $h \mapsto \phi_\epsilon(h)p$ is a $(\lambda',c')$-quasi-isometric embedding. 


Let $\kappa'$ be such that $1<\kappa' < \kappa$.
Let $N,\sigma>0$ be such that $1+ \frac{\sigma \lambda'}{N} < \kappa'$ and $\sigma\Big(1 + \frac{c}{N}\Big) \frac{\kappa\kappa'}{\kappa-\kappa'} <\rho$ where $\rho$ is as defined in the previous lemma.

 By lemma \ref{lem:local}, there exists $\epsilon_0$ with $0<\epsilon_0 \le \epsilon_1$ such that if $g,h \in H$ are such that $d(g,h)\le N$ and $0\le \epsilon \le \epsilon_0$, then
$$\Big| d\big(\phi_\epsilon(g)p, \phi_\epsilon(h)p\big) - d\big(\phi(g)p, \phi(h)p\big)\Big| \le \sigma.$$



Assume now that $0\le \epsilon \le \epsilon_0$. Let $g,h \in H$. Let $q:I \to \sC$ be a geodesic from $g$ to $h$. Let $t_0<t_1<\ldots<t_n$ be points in $I$ such that $q(t_0)=g, q(t_n)=h$, $d_S(t_i,t_{i+1}) \le N$ (for all $i$) and $n\le 1+\frac{d_S(g,h)}{N}$. Then
\begin{eqnarray*}
&&\Big| d\big(\phi_\epsilon(g)p, \phi_\epsilon(h)p\big) - d\big(\phi(g)p, \phi(h)p\big)\Big|\\
&\le& \sum_{i=1}^n \Big| d\big(\phi_\epsilon(q(t_i))p, \phi_\epsilon(q(t_{i-1}))p\big) - d\big(\phi(q(t_i))p, \phi(q(t_{i-1}))p\big)\Big| \le n\sigma\le \sigma+\frac{\sigma d_S(g,h)}{N}.
\end{eqnarray*}
Since $\phi$ is a $(\lambda',c')$-quasi-isometric embedding, this implies
\begin{eqnarray*}
 d\big(\phi_\epsilon(g)p, \phi_\epsilon(h)p\big) - d\big(\phi(g)p, \phi(h)p\big)&\le&  \sigma+\frac{\sigma \lambda'd\big(\phi(g)p, \phi(h)p\big) }{N} + \frac{c'\sigma}{N}.
\end{eqnarray*}
This simplifies to:
\begin{eqnarray*}
 d\big(\phi_\epsilon(g)p, \phi_\epsilon(h)p\big) &\le& \Big(1+ \frac{\sigma \lambda'}{N}\Big)d\big(\phi(g)p, \phi(h)p\big) + \sigma\Big(1+ \frac{c}{N}\Big).
\end{eqnarray*}

By reversing the roles of $\phi_\epsilon$ and $\phi$, we obtain
\begin{eqnarray*}
 d\big(\phi_\epsilon(g)p, \phi_\epsilon(h)p\big) &\ge& \Big(1+ \frac{\sigma \lambda'}{N}\Big)^{-1}d\big(\phi(g)p, \phi(h)p\big) - \Big(1+ \frac{\sigma \lambda'}{N}\Big)^{-1}\sigma\Big(1+ \frac{c}{N}\Big).
\end{eqnarray*}
Thus if $C=\sigma\Big(1+ \frac{c}{N}\Big)$,
\begin{eqnarray}\label{eqn:bilip1}
\frac{1}{\kappa'}d\big(\phi(g)p, \phi(h)p\big) -C  \le& d\big(\phi_\epsilon(g)p, \phi_\epsilon(h)p\big) &\le \kappa' d\big(\phi(g)p, \phi(h)p\big) +C.
\end{eqnarray}
The proposition is automatically true if $g=h$. By the previous lemma, if $g \ne h$ then $ d\big(\phi(g)p, \phi(h)p\big) \ge \rho$. So,
\begin{eqnarray}\label{eqn:bilip2}
\kappa'  d\big(\phi(g)p, \phi(h)p\big) +\sigma\Big(1+ \frac{c}{N}\Big) &\le& \kappa d\big(\phi(g)p, \phi(h)p\big) -(\kappa -\kappa')\rho + \sigma\Big(1+ \frac{c}{N}\Big)\\
&\le& \kappa d\big(\phi(g)p, \phi(h)p\big).
\end{eqnarray}
Similarly,
\begin{eqnarray}\label{eqn:bilip3}
\frac{1}{\kappa'}d\big(\phi(g)p, \phi(h)p\big) -\sigma\Big(1+ \frac{c}{N}\Big) \ge \kappa^{-1}d\big(\phi(g)p, \phi(h)p\big).
\end{eqnarray}
The inequalities (\ref{eqn:bilip1} - \ref{eqn:bilip3}) imply the proposition.

\end{proof}

\subsection{Proof of theorem \ref{thm:asymptotic}}

We will prove each item of theorem \ref{thm:asymptotic} separately. 

\begin{prop}\label{prop:1-1}
There exists an $\epsilon_0 >0$ such that if $0 \le \epsilon \le \epsilon_0$ and $\phi_\epsilon:H \to G$ is an $\epsilon$-perturbation of $\phi$ then $\phi_\epsilon$ is 1-1.
\end{prop}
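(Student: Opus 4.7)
The plan is to derive injectivity of $\phi_\epsilon$ as a short corollary of Proposition \ref{prop:bilip} and Lemma \ref{lem:rho}, which together already rule out the collision of distinct orbit points under a small perturbation.

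First I would fix any basepoint $p \in X$ and any constant $\kappa > 1$ (for instance $\kappa = 2$; the precise value is irrelevant here). Applying Proposition \ref{prop:bilip} with this choice of $\kappa$, I obtain an $\epsilon_0 > 0$ such that for every $\epsilon$-perturbation $\phi_\epsilon$ of $\phi$ with $0 \le \epsilon \le \epsilon_0$, and for all $g, h \in H$,
$$\kappa^{-1} d\bigl(\phi(g)p, \phi(h)p\bigr) \le d\bigl(\phi_\epsilon(g)p, \phi_\epsilon(h)p\bigr) \le \kappa\, d\bigl(\phi(g)p, \phi(h)p\bigr).$$
Simultaneously, Lemma \ref{lem:rho} provides a constant $\rho > 0$ such that $d(\phi(g)p, \phi(h)p) \ge \rho$ whenever $g \ne h$ in $H$. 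Both $\epsilon_0$ and $\rho$ depend only on $\phi$, $p$, and the choice of $\kappa$.

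Now I argue by contradiction. Suppose $\phi_\epsilon$ is not injective: choose distinct $g, h \in H$ with $\phi_\epsilon(g) = \phi_\epsilon(h)$ as isometries of $X$. Evaluating at $p$ gives $\phi_\epsilon(g)p = \phi_\epsilon(h)p$, hence $d(\phi_\epsilon(g)p, \phi_\epsilon(h)p) = 0$. The lower bi-Lipschitz inequality from Proposition \ref{prop:bilip} then forces $d(\phi(g)p, \phi(h)p) \le \kappa \cdot 0 = 0$, contradicting the bound $d(\phi(g)p, \phi(h)p) \ge \rho > 0$ from Lemma \ref{lem:rho}. Therefore $\phi_\epsilon$ must be injective, and the same $\epsilon_0$ works uniformly over all $\epsilon$-perturbations.

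There is no real obstacle: the entire content of the proposition is already embedded in the bi-Lipschitz comparison between the $\phi$-orbit and the $\phi_\epsilon$-orbit of $p$, combined with the discreteness of the $\phi$-orbit. The only minor thing to watch is that injectivity of $\phi_\epsilon$ (as a map into isometries) is strictly weaker than injectivity of the orbit map $h \mapsto \phi_\epsilon(h)p$, so it suffices to prove the latter, which is exactly what the two lemmas yield.
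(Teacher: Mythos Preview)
Your proof is correct and matches the paper's approach exactly: both combine Proposition \ref{prop:bilip} with Lemma \ref{lem:rho} to conclude that $d(\phi_\epsilon(g)p,\phi_\epsilon(h)p) \ge \kappa^{-1}\rho > 0$ for $g \ne h$. The paper states this inequality directly rather than phrasing it as a contradiction, but the content is identical.
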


\begin{proof}
Let $p\in X$. Let $\rho$ be as in lemma \ref{lem:rho}. Let $\kappa>1$. By proposition \ref{prop:bilip} there exists an $\epsilon_0 > 0$ such that if $0\le \epsilon\le\epsilon_0$ then for all $g,h \in H$ with $g\ne h$,
$$0<\kappa^{-1}\rho \le \kappa^{-1} d\big( \phi(h)p, \phi(g)p \big) \le d\big( \phi_\epsilon(h)p, \phi_\epsilon(g)p \big).$$
\end{proof}

Fix a visual metric $d_\partial$ on $\partial X$. The next lemma is well-known. For example, it is an immediate consequence of lemma 3.6 in [BH99, chapter III.H].
\begin{lem}
Let $p\in X$, $W>0$ and $C >0$. Then there exists a $N\ge 0$ such that if $q_1:[0,\infty) \to X$ and $q_2:[0,\infty) \to X$ are geodesic rays with $q_1(0)=q_2(0)=p$ and $d\big(q_1(N),q_2(N)\big) < W$ then $d_\partial(q_1(\infty),q_2(\infty)) <C$.
\end{lem}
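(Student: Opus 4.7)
The plan is to convert the hypothesis "the two rays are close at time $N$" into a lower bound on the Gromov product at infinity $(q_1(\infty)\cdot q_2(\infty))_p$, and then feed that lower bound into the visual metric inequality. Both steps are standard in a $\delta$-hyperbolic space, so the proof should be short.

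First I would estimate the Gromov product at the finite points $q_1(N), q_2(N)$. Since $q_1, q_2$ are geodesic rays from $p$, we have $d(p,q_i(N))=N$, so
\[
(q_1(N)\cdot q_2(N))_p \;=\; \tfrac{1}{2}\!\left(2N - d(q_1(N),q_2(N))\right) \;>\; N - W/2.
\]
Next, a standard consequence of $\delta$-hyperbolicity (see [BH99, III.H, Remark 3.17] or argue directly by looking at thin triangles on the tripod with vertices $p, q_1(N), q_2(N)$ and letting $N\to\infty$ along the rays) gives that the Gromov product at infinity satisfies
\[
(q_1(\infty)\cdot q_2(\infty))_p \;\ge\; (q_1(N)\cdot q_2(N))_p - 2\delta \;>\; N - W/2 - 2\delta.
\]

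Secondly, I would relate this Gromov product to the distance from $p$ to a bi-infinite geodesic $\gamma$ joining $q_1(\infty)$ and $q_2(\infty)$ (whose existence is guaranteed by the definition section above). This is again a standard fact in $\delta$-hyperbolic spaces, appearing in essentially the form cited as [BH99, III.H, Lemma 3.6]: there is a constant $D=D(\delta)$ such that
\[
\bigl| d(p,\gamma) - (q_1(\infty)\cdot q_2(\infty))_p \bigr| \;\le\; D.
\]
Combining the two inequalities yields $d(p,\gamma) \ge N - W/2 - 2\delta - D$.

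Finally, I would plug this into the defining inequality of the visual metric $d_\partial$ with parameter $a>1$: there is a constant $C_0$ such that
\[
d_\partial(q_1(\infty), q_2(\infty)) \;\le\; C_0\, a^{-d(p,\gamma)} \;\le\; C_0\, a^{-N + W/2 + 2\delta + D}.
\]
Since $a > 1$, the right-hand side can be made smaller than $C$ by choosing $N$ sufficiently large in terms of $\delta, a, C_0, W$ and $C$; this choice of $N$ proves the lemma. The only mildly delicate step is the passage from the finite Gromov product at time $N$ to the Gromov product at infinity, but this is entirely routine given the thin-triangle condition.
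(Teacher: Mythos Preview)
Your proof is correct and matches the paper's approach: the paper gives no argument beyond citing [BH99, III.H, Lemma 3.6] as making the lemma an ``immediate consequence,'' and your proof is exactly the natural unpacking of that citation via the Gromov product and the visual-metric inequality. (As a minor simplification, note that for geodesic rays from $p$ the Gromov product $(q_1(t_1)\cdot q_2(t_2))_p$ is monotone nondecreasing in each variable, so the $-2\delta$ in your step 2 is not even needed.)
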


Because the map $h \mapsto \phi(h)p$ is a quasi-isometric embedding into a $\delta$-hyperbolic space, it follows that the Cayley graph $\sC$ is $\delta$-hyperbolic. Let $\partial H = \partial \sC$.

\begin{prop}\label{prop:limit set}
For all $C>0$ and $p\in X$, there exists an $\epsilon_0>0$ such that if $0 \le \epsilon \le \epsilon_0$ and $\phi_\epsilon:H \to G$ is an $\epsilon$-perturbation of $\phi$ then $d_{\Haus}(L(\phi(H)p), L(\phi_\epsilon(H)p)) < C$.
\end{prop}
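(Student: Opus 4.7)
Fix $C > 0$ and $p \in X$. I first apply the visual-metric lemma stated immediately before this proposition, with target $C/2$, to obtain constants $N \ge 0$ and $W > 0$ such that any two geodesic rays $q_1, q_2$ from $p$ satisfying $d(q_1(N), q_2(N)) < W$ automatically have $d_\partial(q_1(\infty), q_2(\infty)) < C/2$. It then suffices to prove, for $\epsilon_0$ sufficiently small, each of the two inclusions that make up Hausdorff closeness: each $\xi \in L(\phi(H)p)$ has a companion $\xi' \in L(\phi_\epsilon(H)p)$ with $d_\partial(\xi, \xi') < C/2$, and conversely.

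For the forward inclusion, I use Lemma \ref{lem:Hqi}: $\phi$ is a $(\lambda, c)$-quasi-isometric embedding of the Cayley graph $\sC$ of $H$ onto a quasi-convex cocompact subset of $X$, so any $\xi \in L(\phi(H)p)$ can be written as $\xi = \lim_n \phi(g_n)p$ for some geodesic ray $(g_n)_{n \ge 0}$ in $\sC$ with $g_0 = \mathrm{id}$. By Proposition \ref{prop:qi}, for $\epsilon$ small the map $h \mapsto \phi_\epsilon(h)p$ is also a quasi-isometric embedding, so $\phi_\epsilon(g_n)p$ is a quasi-geodesic ray in $X$ converging to some $\xi' \in L(\phi_\epsilon(H)p)$. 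By Theorem \ref{thm:stability}, the geodesic ray $q_1$ from $p$ to $\xi$ tracks the polygonal path $n \mapsto \phi(g_n)p$ within some constant $R$, and the geodesic ray $q_2$ from $p$ to $\xi'$ tracks $n \mapsto \phi_\epsilon(g_n)p$ within some constant $R'$.

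The heart of the argument is to choose $n$ bounded above by some $n_0 = n_0(N, \lambda, c)$ so that $\phi(g_n)p$ lies within $R$ of $q_1(N)$, and then to bound $d(\phi_\epsilon(g_n)p, \phi(g_n)p)$ by a constant smaller than $W - R - R'$. Iterating the defining $\epsilon$-perturbation inequality, one shows that $\phi_\epsilon(g_n)$ differs from $\phi(g_n)$ (after left-translation by $\phi_\epsilon(\mathrm{id})$) by an accumulated $d_G$-error of order $n\epsilon$ up to a finite distortion. Passing this bound to displacement at $p$ via the fact that $d_G$ induces the topology of uniform convergence on compact sets yields $d\big(\phi_\epsilon(g_n)p, \phi(g_n)p\big) \le \omega(n_0, \epsilon)$ with $\omega(n_0, \epsilon) \to 0$ as $\epsilon \to 0$. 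Taking $\epsilon_0$ small enough that this error lies below $W - R - R'$, the visual-metric lemma yields $d_\partial(\xi, \xi') < C/2$. The reverse inclusion is obtained by the symmetric argument with $\phi, \phi_\epsilon$ swapped, using Proposition \ref{prop:bilip} to quasi-invert the bi-Lipschitz comparison.

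The main obstacle is the iterated-perturbation estimate: since $d_G$ is only left-invariant, right-multiplications by $\phi(s_i)$ can a priori distort errors and a naive induction does not yield a uniform bound. This is handled by working with the displacement at $p$ and using that the orbit segment $\{\phi(g)p : |g| \le n_0\}$ lies in a compact subset of $X$ on which the $G$-action is equicontinuous, an input that crucially uses the hypothesis that $d_G$ induces the topology of uniform convergence on compact sets.
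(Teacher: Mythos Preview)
Your approach is essentially the same as the paper's, but there are two points worth noting.

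First, the ``iterated-perturbation estimate'' you spend a paragraph re-deriving is exactly Lemma~\ref{lem:local}: for every $N,\sigma>0$ there is $\epsilon_0>0$ so that $d\big(\phi(g)p,\phi_\epsilon(g)p\big)\le\sigma$ whenever $d_S(g,\mathrm{id})\le N$. The equicontinuity argument you describe is precisely the content of that lemma's proof, so you should simply cite it. Second, the paper avoids splitting into two inclusions. Since both $h\mapsto\phi(h)p$ and $h\mapsto\phi_\epsilon(h)p$ are quasi-isometric embeddings of the Cayley graph $\sC$, each extends to a topological embedding $\pi_0,\pi_\epsilon:\partial\sC\to\partial X$ whose image is the corresponding limit set. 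The paper shows $d_\partial(\pi_0(\xi),\pi_\epsilon(\xi))\le C$ for every $\xi\in\partial\sC$, which immediately gives both Hausdorff inclusions at once; Proposition~\ref{prop:bilip} is never needed.

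There is also a small gap in your sketch. You choose $n\le n_0$ so that $\phi(g_n)p$ lies within $R$ of $q_1(N)$, and bound $d(\phi_\epsilon(g_n)p,\phi(g_n)p)$. But the stability theorem only tells you $\phi_\epsilon(g_n)p$ is within $R'$ of \emph{some} point $q_2(t)$, not of $q_2(N)$; you cannot directly conclude $d(q_1(N),q_2(N))<W$ from the bound $R+\omega+R'$. The paper fills this by observing that since $q_1(0)=q_2(0)=p$, one has $|t-N|=\big|d(q_2(t),p)-d(q_1(N),p)\big|\le d(q_2(t),q_1(N))$, whence $d(q_1(N),q_2(N))\le 2\big(R+\omega+R'\big)$. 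This is easy but should be said; otherwise the invocation of the visual-metric lemma is not justified.
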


\begin{proof}
By proposition \ref{prop:qi}, there exists $\lambda'\ge 1, c'\ge 0, \epsilon_2 > 0$ such that if $0 \le \epsilon \le \epsilon_2$ then the map $h \mapsto \phi_\epsilon(h)p$ is a $(\lambda',c')$-quasi-isometry.

Let $C, \sigma >0$. Let $W=4R+2\sigma$ where $R=R(\delta,\lambda',c')$ is as in theorem \ref{thm:stability}. Let $N$ be as in the previous lemma.

By lemma \ref{lem:local}, there exists an $\epsilon_1 >0$ such that if $0 \le \epsilon \le \epsilon_1$ then for all $h \in H$ with $d_S(h,id)\le \lambda'(N+R) + \lambda'c'$, $d(\phi_\epsilon(h)p, \phi(h)p) \le \sigma.$

Now let $\epsilon_0=\min(\epsilon_1,\epsilon_2)$. Let $0\le \epsilon\le \epsilon_0$. Let $\phi_\epsilon:H \to \Isom(X)$ be any $\epsilon$-perturbation of $\phi$. Let $\pi_\epsilon:H \to X$ be the map $\pi_\epsilon(h) = \phi_\epsilon(h)p$. Extend $\phi_\epsilon$ to all of $\sC$ by choosing, for each edge in $\sC$, one of its endpoints and mapping the entire edge to the image of its endpoint. The resulting map is still a quasi-isometric embedding. By [BH99, III.H, theorem 3.9], $\pi_\epsilon$ has a unique continuous extension $\pi_\epsilon:\sC \cup \partial \sC \to X \cup \partial X$ that restricts to a topological embedding of $\partial \sC$ into $\partial X$ with image equal to $L(\phi_\epsilon(H)p)$.

Similarly, let $\pi_0:H \to X$ be the map $\pi_0(h)=\phi(h)p$. Extend it to a map $\pi_0:\sC \cup \partial \sC \to X \cup \partial X$ in a similar manner.

Let $\xi \in \partial H$. Let $q:[0,\infty) \to \sC$ be a geodesic ray with $q(0)=id$ and $q(\infty)=\xi$. By definition $\pi_\epsilon \circ q$ is a $(\lambda',c')$-quasi-geodesic. Let $\zeta_\epsilon = \pi_\epsilon(\xi)$. Let $\gamma_\epsilon:[0,\infty) \to X$ be a geodesic ray with $\gamma_\epsilon(0)=p$ and $\gamma_\epsilon(\infty)=\zeta_\epsilon$.

 By theorem \ref{thm:stability}, the Hausdorff distance between $\pi_\epsilon\circ q([0,\infty)\cap \Z) $ and $\gamma_\epsilon([0,\infty))$ is at most $R$. So there exists an $h \in H$ with $d(\gamma_\epsilon(N),\pi_\epsilon(h)) \le R$. Since $\pi_\epsilon$ is a $(\lambda',c')$-quasi-isometry,
$$d_S(h,id) \le \lambda' d(\pi_\epsilon(h), p ) + \lambda' c' \le \lambda'(N+R) +\lambda'c'.$$
Thus, $d(\pi_\epsilon(h), \pi_0(h) ) \le \sigma$. By theorem \ref{thm:stability}, there exists a $t \ge 0$ with $d(\pi_0(h), \gamma_0(t))\le R$. Observe that
\begin{eqnarray*}
|t-N|&=&\Big|d\big( \gamma_0(t), p \big)-d\big( \gamma_\epsilon(N), p \big)\Big|  \le d\big(\gamma_0(t), \gamma_\epsilon(N)\big)\\
&\le& d\big( \gamma_0(t), \pi_0(h)\big) + d\big( \pi_0(h), \pi_\epsilon(h) \big) + d\big( \pi_\epsilon(h), \gamma_\epsilon(N) \big) \le 2R+\sigma.
\end{eqnarray*}
This implies
\begin{eqnarray*}
\Big| d\big( \gamma_0(N), p \big)-d\big( \gamma_\epsilon(N), p \big)\Big| &\le&\Big| d\big( \gamma_0(N), p \big)-d\big( \gamma_0(t), p \big)\Big| + \Big| d\big( \gamma_0(t), p \big)-d\big( \gamma_\epsilon(N), p \big)\Big|\\
&\le& 4R+2\sigma.
\end{eqnarray*}
By the choice of $N$, this implies that $d_\partial\big(\pi_\epsilon(\xi),\pi_0(\xi)\big) \le C$. Since this is true for all $\xi \in \partial H$, it follows that $d_{\Haus}( \pi_\epsilon(\partial H), \pi_0(\partial H)) \le C$ as claimed.
\end{proof}

To prove the last part of theorem \ref{thm:asymptotic}, we rely on a well-known generalization of Patterson-Sullivan theory to word hyperbolic groups due to Coornaert. This is explained next.

\begin{defn}\label{defn:exponent}
Let $\Gamma \subset \Isom(X)$ be a discrete subset, $p, q\in X$ and $s> 0$. Then the {\it Poincar\'e series} of $\Gamma$ with respect to the visual parameter $a>0$ is defined by
$$g_s(p,q) = \sum_{\gamma \in \Gamma} a^{-sd(p,\gamma q)}.$$
A short calculation shows that if $g_s(p,q)$ is finite for some pair $(p,q)$ then it is
finite for all such pairs. So let $\delta_a(\Gamma)=\inf\{s: \, g_s(p,q)<\infty\}$ be the {\it exponent of convergence} of $\Gamma$ with respect to the parameter $a$.
In [Co93] it was proven that if $\Gamma$ is a quasi-convex cocompact subgroup then $\delta_a(\Gamma)=\HD(L(\Gamma))$, the Hausdorff-dimension of the limit set of $\Gamma$ with respect to a visual metric $d_\partial$ with parameter $a$.
\end{defn}

In order to apply Coornaert's result, we need the next lemma.
\begin{lem}
There exists an $\epsilon_0>0$ such that if $0 \le \epsilon \le \epsilon_0$, $\phi_\epsilon:H \to \Isom(X)$ is an $\epsilon$-perturbation of $\phi$ and $\phi_\epsilon$ restricted to $H'$ is a homomorphism (for some $H'<H$ with finite index) then $\phi_\epsilon(H')$ is quasi-convex cocompact.
\end{lem}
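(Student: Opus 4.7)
The strategy is to upgrade the orbit-map quasi-isometric embedding provided by Proposition \ref{prop:qi} to quasi-convex cocompactness of $\phi_\epsilon(H')$ on the nose, using the standard characterization of quasi-convex cocompact subgroups of isometries of a hyperbolic space via quasi-isometric orbit maps.

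Fix a basepoint $p\in X$. By Proposition \ref{prop:qi}, there exist constants $\lambda'\ge 1$, $c'\ge 0$ and $\epsilon_1>0$ such that for every $0\le\epsilon\le\epsilon_1$ and every $\epsilon$-perturbation $\phi_\epsilon$ of $\phi$, the map $h\mapsto\phi_\epsilon(h)p$ is a $(\lambda',c')$-quasi-isometric embedding of $(H,d_S)$ into $X$; by Proposition \ref{prop:1-1}, shrink $\epsilon_0:=\epsilon_1$ further so that $\phi_\epsilon$ is also injective. Now let $H'<H$ be any finite-index subgroup on which $\phi_\epsilon$ restricts to a homomorphism. The inclusion $H'\hookrightarrow H$ is itself a quasi-isometry for any finite word metric on $H'$, so restricting the orbit map yields a quasi-isometric embedding $\pi:H'\to X$, $\pi(h')=\phi_\epsilon(h')p$, which, because $\phi_\epsilon|_{H'}$ is a homomorphism, coincides with the orbit map at $p$ of the subgroup $\Gamma':=\phi_\epsilon(H')\le\Isom(X)$.

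Next I would verify each clause of the definition of quasi-convex cocompact for $\Gamma'$. Let $A\subset X$ denote the union of all geodesic segments joining two points of the orbit $\Gamma' p$. Since $\phi_\epsilon|_{H'}$ is a homomorphism and $\Gamma'$ acts by isometries, $\Gamma'$ permutes $\Gamma' p$ and hence $A$ is $\Gamma'$-invariant. By Theorem \ref{thm:stability}, every such segment lies within Hausdorff distance $R=R(\delta,\lambda',c')$ of the image under $\pi$ of a geodesic in the Cayley graph of $H'$, so $A$ is contained in the $R$-neighborhood of $\Gamma' p$; this yields both quasi-convexity of $A$ and compactness of $A/\Gamma'$. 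Proper discontinuity is immediate: a quasi-isometric embedding is metrically proper, so only finitely many elements of $H'$ send $p$ into any bounded set, and $\phi_\epsilon|_{H'}$ is injective.

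The remaining condition, that $\Gamma'$ fixes no point of $\partial X$, is the most delicate step. Because $\phi(H)$ satisfies the paper's definition of quasi-convex cocompact, $H$ is infinite and fixes no boundary point; in particular $H$ is nonelementary word hyperbolic, and so is its finite-index subgroup $H'$. Thus $H'$ contains a rank-$2$ free subgroup $F$, and by injectivity of $\phi_\epsilon$ and the quasi-isometric embedding property, $\phi_\epsilon(F)\le\Gamma'$ is still free of rank $2$ with generators acting on $X$ as hyperbolic isometries whose fixed-point pairs on $\partial X$ are disjoint; consequently $\Gamma'$ cannot fix a single point of $\partial X$. I expect this final clause to be where the most care is needed, while the preceding paragraphs are routine unpackings of Proposition \ref{prop:qi} and Theorem \ref{thm:stability}.
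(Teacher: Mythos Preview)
Your argument is correct and follows the same core strategy as the paper: invoke Proposition \ref{prop:qi} to get a uniform $(\lambda',c')$-quasi-isometric orbit map, and then use stability of quasi-geodesics (Theorem \ref{thm:stability}) to produce the required $\phi_\epsilon(H')$-invariant quasi-convex set $A$ with compact quotient.

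There is one minor difference worth noting. The paper takes $A$ to be the orbit $\phi_\epsilon(H')p$ itself, so that $A/\phi_\epsilon(H')$ is literally a single point; quasi-convexity of this discrete set follows directly from the fact that geodesics in the Cayley graph map to $(\lambda',c')$-quasi-geodesics in $X$, which by stability stay within $R(\delta,\lambda',c')$ of the true geodesic between their endpoints. You instead take $A$ to be the union of all geodesic segments joining pairs of orbit points, and show it lies in a bounded neighborhood of the orbit. Both choices are standard and both work; the paper's choice is marginally cleaner for the compactness step.

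You are in fact more careful than the paper: the paper's proof omits entirely the verification of proper discontinuity and of the no-fixed-point-on-$\partial X$ clause, whereas you address both. Your treatment of the boundary-fixed-point condition via a rank-$2$ free subgroup of $H'$ is correct and is indeed the substantive point; it is perhaps worth remarking that the hyperbolicity of the generators (and the disjointness of their fixed-point pairs) follows because the orbit of each infinite cyclic subgroup is a quasi-geodesic in $X$ with two distinct endpoints at infinity, and those endpoints for non-commensurable cyclic subgroups are distinct since the orbit map is a quasi-isometric embedding.
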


\begin{proof}
Let $p\in X$. By proposition \ref{prop:qi}, there exists $\lambda'\ge 1, c'\ge 0, \epsilon_2 > 0$ such that if $0 \le \epsilon \le \epsilon_2$ then the map $h \mapsto \phi_\epsilon(h)p$ is a $(\lambda',c')$-quasi-isometry. Thus if $q:I \cap \Z \to H$ is any map with $d_S(q(a),q(b)) = |a-b|$ for $a,b \in I\cap \Z$, then $t \mapsto \phi_\epsilon(q(t))p$ is a $(\lambda',c')$-quasi-geodesic. This implies, by theorem \ref{thm:stability}, that if $A = \{\phi_\epsilon(h)p ~|~ h\in H'\}$, then $A$ is quasi-convex. Of course, it is $\phi_\epsilon(H')$-invariant. Since $A/\phi_\epsilon(H')$ is a single point, it is compact.
\end{proof}

\begin{prop}\label{prop:HD}
Let $C>0$. There exists an $\epsilon_0>0$ such that if $0 \le \epsilon \le \epsilon_0$ and $\phi_\epsilon:H \to \Isom(X)$ is an $\epsilon$-perturbation of $\phi$ that is virtually a homomorphism then $\Big| \HD(L(\phi_\epsilon) - \HD(L(\phi)) \Big| \le C$.
\end{prop}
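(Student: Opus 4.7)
The plan is to combine Coornaert's theorem (see Definition \ref{defn:exponent}) with the bi-Lipschitz estimate of Proposition \ref{prop:bilip}. Coornaert identifies the Hausdorff dimension of the limit set of a quasi-convex cocompact subgroup with its critical exponent $\delta_a$, so I will reduce the problem to comparing the critical exponents of $\phi(H')$ and $\phi_\epsilon(H')$, where $H' < H$ is the finite index subgroup on which $\phi_\epsilon$ restricts to a homomorphism.

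First I would apply the unnumbered lemma preceding this proposition to shrink $\epsilon_0$ so that $\phi_\epsilon(H')$ is quasi-convex cocompact; Coornaert's theorem then applies to both $\phi(H')$ and $\phi_\epsilon(H')$. Next, I would observe that since $\phi_\epsilon(H'h) = \phi_\epsilon(H')\phi_\epsilon(h)$ for every $h \in H$, and $H$ is a finite union of cosets of $H'$, the orbit $\phi_\epsilon(H)p$ is a finite union of $\phi_\epsilon(H')$-orbits, so $L(\phi_\epsilon) = L(\phi_\epsilon(H'))$; likewise $L(\phi) = L(\phi(H)) = L(\phi(H'))$. Hence it suffices to show that $|\delta_a(\phi_\epsilon(H')) - \delta_a(\phi(H'))|$ is small.

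The heart of the argument is the comparison of the two Poincar\'e series. Fix $p \in X$ and $\kappa > 1$. By Proposition \ref{prop:bilip} there is an $\epsilon_0 > 0$ such that, for $0 \le \epsilon \le \epsilon_0$,
$$\kappa^{-1} d(p,\phi(h)p) \le d(p,\phi_\epsilon(h)p) \le \kappa\, d(p,\phi(h)p) \qquad (h \in H),$$
where we take $g = id$ in that proposition and use that $\phi_\epsilon(id) = id$ because $\phi_\epsilon|_{H'}$ is a homomorphism. Since the visual metric parameter $a$ exceeds $1$, these inequalities imply that the Poincar\'e series of $\phi_\epsilon(H')$ evaluated at $s$ is sandwiched between the Poincar\'e series of $\phi(H')$ evaluated at $\kappa s$ and at $s/\kappa$. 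Consequently $\kappa^{-1}\delta_a(\phi(H')) \le \delta_a(\phi_\epsilon(H')) \le \kappa\, \delta_a(\phi(H'))$; since $\HD(L(\phi))$ is a fixed finite number, choosing $\kappa$ sufficiently close to $1$ (and thus $\epsilon_0$ sufficiently small) yields the claimed estimate.

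The only real obstacle is the bookkeeping needed to identify the limit set of $\phi_\epsilon(H)$ with that of its finite index subgroup $\phi_\epsilon(H')$, which is where the hypothesis of being only virtually a homomorphism enters; once this identification is in place, the entire estimate reduces to applying Proposition \ref{prop:bilip} in the definition of the Poincar\'e series and invoking Coornaert's theorem.
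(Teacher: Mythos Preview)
Your proposal is correct and follows essentially the same approach as the paper: reduce to comparing critical exponents via Coornaert's theorem, invoke the preceding lemma to ensure $\phi_\epsilon(H')$ is quasi-convex cocompact, and use the bi-Lipschitz estimate of Proposition \ref{prop:bilip} to control the Poincar\'e series. The paper organizes this slightly differently (it first asserts $|\delta_a(\phi(H))-\delta_a(\phi_\epsilon(H))|<C$ for the full discrete sets and then reduces to $H'$, whereas you work with $H'$ throughout), but the substance is identical.
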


\begin{proof}
It follows from proposition \ref{prop:bilip} that there exists an $\epsilon_0>0$ such that if $0 \le \epsilon \le \epsilon_0$ and $\phi_\epsilon:H \to \Isom(X)$ is an $\epsilon$-perturbation of $\phi$ then $\big|\delta(\phi(H)) - \delta(\phi_\epsilon(H))\big| < C$. 

Assume $\phi_\epsilon$ is virtually a homomorphism. So there exists a finite-index subgroup $H'<H$ such that $\phi_\epsilon$ restricted to $H'$ is a homomorphism. By the previous lemma and Coornaert's result (mentioned in definition \ref{defn:exponent}), $\delta_a(\phi_\epsilon(H')) = \HD(L(\phi_\epsilon(H')p))$. So it suffices to show that $\delta_a(\phi_\epsilon(H'))=\delta_a(\phi_\epsilon(H))$ and $L(\phi_\epsilon(H')p) = L(\phi_\epsilon(H)p)$. These are easy exercises left to the reader. 
\end{proof}

Theorem \ref{thm:asymptotic} now follows from propositions \ref{prop:1-1}, \ref{prop:limit set} and \ref{prop:HD}.

\section{Surface Groups and Aperiodic Tilings}\label{section:surface}

Because of the surface subgroup conjecture, we would like to generalize theorem \ref{thm:uniform} to allow $F$ to be a surface group. The only step in the proof which requires $F$ to be a free group is in theorem \ref{thm:core}: showing the existence of a periodic point in a certain graph subshift over $F$. This points to a general problem: find conditions on graph subshifts over a surface group that guarantee the existence of a periodic point. We will consider a continuous version of this problem; replacing the surface group with $\PSL_2(\R)=\Isom^+(\H^2)$ and show, by an explicit counterexample, that the existence of an invariant Borel probability measure is not sufficient. However, up to minor variations, this is the only known counterexample. We make a precise conjecture to the effect that this counterexample is unique and show that it implies the surface subgroup conjecture.


To begin, let us define tiling spaces, which are the continuous analog of graph subshifts.


\begin{defn}

A {\it tile} is a curvilinear polygon in $\H^2$. We think of it as
a compact subset $\H^2$ (equal to the closure of its interior) and also as a finite CW-complex isomorphic to a
polygon. If $P=\{\tau_1, \tau_2,...\}$ is a set of tiles, then a {\it tiling by $P$} is a
collection $T$ of congruent copies of the tiles in $P$ such that
\begin{itemize}
\item (covering) the union of all tiles in $T$ equals the whole plane and
\item (edge-to-edge) for any distinct pair $\tau_1, \tau_2$ of tiles in $T$ the intersection of $\tau_1$ with
$\tau_2$ is either empty, a vertex of both, or an edge of both.
\end{itemize}

Let $\sT(P)$ be the set of all tilings by $P$. It has the following topology. If
$\{T_i\}$ is a sequence of tilings then $T_i$ converges to $T_\infty$ if for every tile $\tau \in T_\infty$, there exist tiles $\tau_i \in T_i$ with $\tau_i$ converging
to $\tau$ in the Hausdorff topology on closed subsets of the plane.

$\Isom^+(\H^2)$ acts on $\sT(P)$ in the obvious way: $\forall g\in \Isom^+(\H^2)$, $gT=\{g\tau \, | \, \tau \in T\}$. The {\it symmetry group} of
$T$ is the group of all isometries that fix $T$. $T$ is {\it periodic} if its symmetry
group is cofinite. If $P$ is finite, then the symmetry group of $T$ is necessarily discrete.

If the set of tiles $P$ is such that $\sT(P)$ is nonempty and {\it every} tiling $T \in
\sT(P)$ is nonperiodic then $P$ is {\it aperiodic}.

\end{defn}

\begin{thm}
There exists an aperiodic set of tiles $P=\{\tau,\sigma\}$ such that $\sT(P)$ is nonempty and there is a $\Isom^+(\H^2)$-invariant Borel probability measure on $\sT(P)$.
\end{thm}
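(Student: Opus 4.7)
The plan is to realize $\sT(P)$ as the hull of a primitive hyperbolic substitution tiling on two prototiles, exploiting self-similarity to supply both aperiodicity and the invariant measure. Fix two hyperbolic polygons $\tau,\sigma$ and a substitution rule $\varsigma$ sending each prototile to a finite patch composed of $\tau$'s and $\sigma$'s at some hyperbolic expansion factor $\lambda>1$. I would arrange that (i) iterated substitution $\varsigma^n(\tau)$ produces arbitrarily large patches converging to a tiling of all of $\H^2$, so $\sT(P)\ne\emptyset$; (ii) $\varsigma$ is primitive, in the sense that some $\varsigma^n$ sends every prototile to a patch containing both $\tau$ and $\sigma$; and (iii) the tiles are equipped with edge-matching decorations strong enough to force every legal tiling to admit a unique inverse substitution (``recognizability''). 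Hyperbolic substitution systems on a small number of prototiles of this form have been built by Penrose, Goodman-Strauss and others.

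Given these ingredients, the argument would proceed as follows. Define $\sT(P)$ as the closure of $\Isom^+(\H^2)\cdot T_0$ where $T_0$ comes from iterated substitution. Recognizability yields a continuous inflation map $\varsigma^{-1}:\sT(P)\to\sT(P)$. Aperiodicity then follows because any isometry $h$ fixing $T\in\sT(P)$ must, by uniqueness of the inverse substitution, also fix $\varsigma^{-n}T$ for every $n\ge 0$; the tiles of $\varsigma^{-n}T$ have hyperbolic diameter $\lambda^n\cdot\mathrm{diam}(D)\to\infty$, so $h$ would fix patches of unbounded size and must be the identity. In particular the symmetry group of $T$ is not cofinite. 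For the invariant measure, I would apply Perron–Frobenius to the substitution matrix $M_{ij}=\#\{\text{copies of prototile }i\text{ in }\varsigma(j)\}$, extract the (unique, strictly positive) eigenvector of tile frequencies $(\rho_\tau,\rho_\sigma)$, and define $\mu$ on cylinder sets $\{T:T|_B=p\}$ by $\mathrm{Vol}(B)$ times the asymptotic frequency of $p$ under iterated substitution. Translation-invariance of the frequencies yields $\Isom^+(\H^2)$-invariance of $\mu$, and primitivity gives finiteness after normalization.

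The main obstacle is simultaneously arranging (i)–(iii) with only two prototiles in $\H^2$: the inflation factor $\lambda$ is constrained by what can be achieved via a hyperbolic self-similarity, and the edge-matching rules needed to enforce recognizability on just two shapes are delicate. A safer variant I might fall back on is to impose aperiodicity symbolically: take $\tau,\sigma$ to be two edge-decorated copies of a fundamental polygon $D$ of a cocompact torsion-free Fuchsian group $\Gamma$, and use matching rules to force the labeling $\omega:\Gamma\to\{\tau,\sigma\}$ to lie in the pullback along the abelianization $\Gamma\to\Z^{2g}$ of a uniquely ergodic aperiodic minimal $\Z^{2g}$-subshift $\bar Y$ (e.g.\ a multidimensional Sturmian subshift). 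This identifies $\sT(P)$ with $(\Isom^+(\H^2)\times Y)/\Gamma$, the invariant measure with $\bar\mu\otimes\nu$ where $\bar\mu$ is Haar on $\Gamma\backslash\Isom^+(\H^2)$ and $\nu$ is the pulled-back unique invariant measure on $\bar Y$, and every $\omega$-stabilizer lies inside $[\Gamma,\Gamma]$, hence has infinite index in $\Gamma$ and is non-cofinite in $\Isom^+(\H^2)$. The price of this variant is having to encode a sofic (not SFT) subshift as an SFT by enlarging the tile alphabet with synchronization markers in the spirit of Mozes and Goodman–Strauss.
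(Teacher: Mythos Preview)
Your proposal has a genuine gap in the construction of the invariant measure, and the gap is exactly the phenomenon the paper is trying to exhibit. In the Euclidean setting, the Perron--Frobenius frequency argument works because supertiles $\varsigma^n(\tau)$ form a F\o lner sequence: boundary effects are $o(\mathrm{Vol})$, so patch frequencies converge and are isometry-invariant. In $\H^2$ this fails outright. There are no non-isometric similarities of $\H^2$, so your ``expansion factor $\lambda$'' cannot be metric; any hyperbolic substitution is combinatorial, and the resulting supertiles have boundary comparable to their area. Consequently the ``asymptotic frequency of $p$'' need not exist, and even when it does it need not be invariant under the full isometry group. Indeed, the paper's one-tile binary tiling $\sT(\{\sigma\})$ is precisely a primitive hyperbolic substitution system (each pentagon $\sigma$ is replaced by the two congruent pentagons sitting below it), and the paper shows that $\sT(\{\sigma\})$ admits \emph{no} $\Isom^+(\H^2)$-invariant Borel probability measure, because there is an equivariant map $\sT(\{\sigma\})\to\partial\H^2$. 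So the very mechanism you invoke is refuted by the single-tile case.

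Your fallback also does not meet the statement as written: you concede that encoding a Sturmian (sofic) $\Z^{2g}$-system as a local matching rule forces you to enlarge the alphabet beyond two tiles, whereas the theorem asserts $|P|=2$. The paper's route avoids both problems by an explicit construction: $\sigma$ is the binary-tiling pentagon and $\tau$ is an ideal-triangle tile coming from the $PSL_2(\Z)$-invariant horoball packing. Aperiodicity is proved not via recognizability but by a horocycle argument (every tiling contains a bi-infinite row of $\sigma$'s, hence a horocycle in its $1$-skeleton; a cofinite symmetry group would force a cusp, which the tiles cannot fill). The invariant measure is obtained not from frequencies but as a weak$^*$ limit of genuinely lattice-invariant probability measures $\mu_N$ supported on \emph{partial} tilings: tile the horoball complement by $\tau$'s and the first $N$ rows of each horoball by $\sigma$'s, so that the symmetry group has finite index in $PSL_2(\Z)$; then let $N\to\infty$. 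This limiting procedure is what replaces the F\o lner argument that is unavailable in $\H^2$.
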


\begin{proof}[Proof Sketch]
This example first appeared in [BHRS05]. The starting point is a slight modification of an aperiodic tile set described in [Pe78]. Identify $\H^2$ with the upperhalf plane model. So $\H^2=\{x+iy \in \C \, |\, y >0\}$
with the metric $ds^2 = \frac{dx^2 + dy^2}{y^2}$. Euclidean similarities that preserve
$\H^2$ are isometries of the hyperbolic metric.

For $w>0$, let $\sigma=\sigma_w$ be the Euclidean rectangle with vertices $i, i+w, 2i, 2i+w$. As a CW-complex, it is to be regarded as a pentagon where the extra vertex is at $i+w/2$.

\begin{figure}[htb]
\begin{center}
 \psfig{file=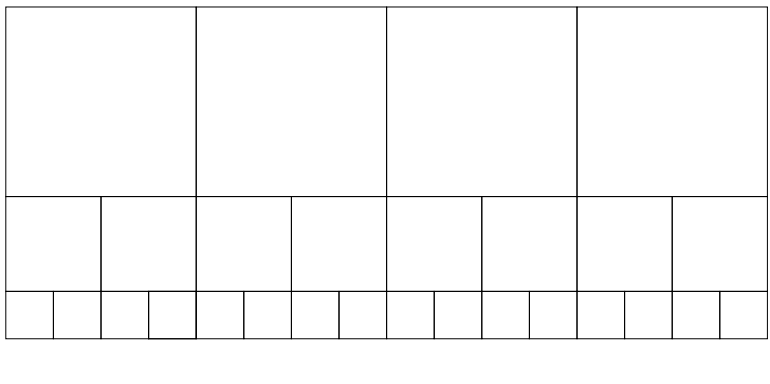,height=2in,width=4in}
\caption{Part of a tiling by $\{\sigma\}$ in the upper-half plane model.}
\label{fig:binarytiling}
\end{center}
\end{figure}

Let $t,s$ be the isometries of $\H^2$ given by $t(z)=z+w$ and $s(z)=2z$. Then $T=\{s^n t^m \sigma_w|\, n, m \in \Z\}$
is a tiling by $P$. See figure \ref{fig:binarytiling}. Thus $\sT(\{\sigma\})$ is nonempty.

 However, it can be shown that $\sT(\{\sigma\})$ does not admit any $\PSL_2(\R)$-invariant Borel probability measures. This is because there is a $\PSL_2(\R)$-equivariant map from $\sT(\{\sigma\})$ onto $\partial \H^2$ defined by $T \mapsto p$ where $p$ is the unique point which is contained in the geodesic extension of every ``vertical'' edge of tiles in $T$. If there is a $\PSL_2(\R)$-invariant Borel probability measure on $\sT(\{\sigma\})$ then it pushes forward to a $\PSL_2(\R)$-invariant measures on $\partial \H^2$. But, an easy exercise shows that there are no $\PSL_2(\R)$-invariant Borel probability measures on $\partial \H^2$.

To get an invariant measure we will need another tile. Consider the standard horoball packing as shown in figure \ref{fig:horoball} in the Poincar\'e model. It
is invariant under $PSL_2(\Z) < PSL_2(\R)$. Let $\tau$ be one of the
curvilinear triangles in the complement of the horoballs. Let $w$ be the length of one of its edges.

Let $P=\{\tau,\sigma_w\}$. $P$ tiles in the following way. Consider the standard horoball packing. Tile each horoball with copies of $\sigma_w$. Use the triangle $\tau$ to tile the complement of the horoballs. See figure \ref{fig:aperiodic}. In [BHRS] it was shown that $P$ is aperiodic and there are uncountably many ergodic $\PSL_2(\R)$-invariant Borel probability measures on $\sT(P)$. Here we only sketch aperiodicity and existence of a Borel probability measure.

\begin{figure}[htb]
\begin{center}
 \psfig{file=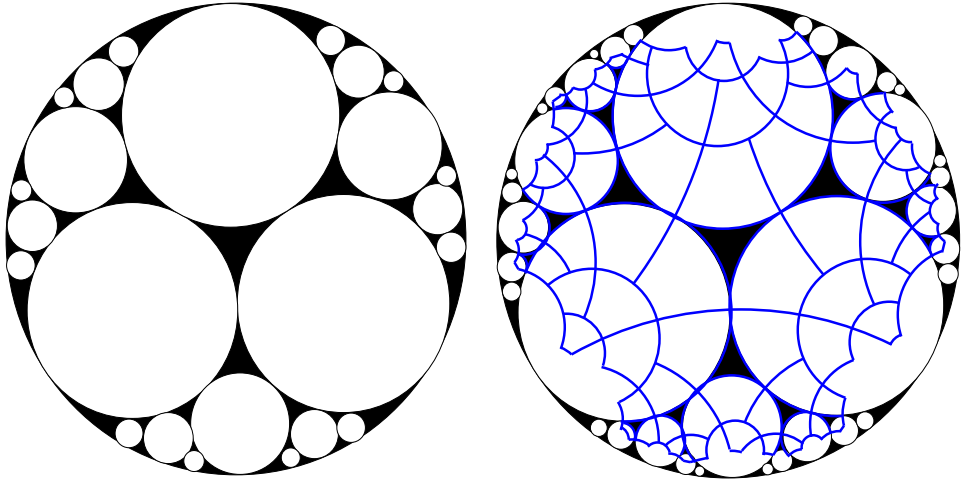,height=2.5in,width=5in}
\caption{Left: Part of the horoball packing invariant under $PSL_2(\Z)$. Right: Part of a tiling by $\{\tau, \sigma_w\}$. }
\label{fig:horoball}\label{fig:aperiodic}
\end{center}
\end{figure}

Let $T \in \sT(P)$. Note that $T$ contains a copy of
$\sigma_w$ since it is not possible to tile with copies of the triangle $\tau$ alone. Let
$\sigma'$ be such a copy. Note that only another copy of
$\sigma_w$ is allowed to be next to $\sigma'$ on either its left or its right edges.
Therefore, $T$ contains a copy of the set $\{t^i \sigma_w | i \in \Z\}$ where $t$ is the
isometry $t(z)=z+w$. So if $e$ is the ``top'' edge of $\sigma'$ then the set $h=\{t^i e | \, i \in \Z\}$ is a horocycle. So there is a horocycle contained in the edges of $T$. 

If $\Gamma$ is the symmetry group of $T$ then the quotient space $\H^2/\Gamma$ admits a
tiling by $\{\sigma,\tau\}$ (the quotient tiling). So the horocycle $h$ must descend to a closed
horocycle on $\H^2/\Gamma$. Thus $\H^2/\Gamma$ is noncompact. But any tile placed deep
enough in a cusp necessarily has self-intersections. So $\H^2/\Gamma$ has no cusps. So
it cannot have finite volume. This implies $T$ is nonperiodic. Since $T$ is arbitrary, $P$ is aperiodic.

To produce a $\PSL_2(\R)$-invariant Borel probability measure on $\sT(P)$, consider the standard horoball packing as in figure \ref{fig:horoball}. Use the triangle $\tau$ to tile the complement of the union of horoballs. For $N>0$, tile the first $N$ ``rows'' of each horoball with copies of $\sigma_w$. This can be done in such a way that the resulting partial tiling has symmetry group $H_N$ with finite index in $PSL_2(\Z)$. Hence $H_N$ is a lattice. So there is an invariant probability measure $\mu_N$ supported on the translates of this partial tiling.

The set of partial tilings of $\H^2$ by $P$ is topologized in a manner analogous to how $\sT(P)$ is topologized. With this topology, it is compact and metrizable. So the Banach-Alaoglu theorem implies the existence of a weak* limit point of the sequence $\{\mu_N\}$. Let $\mu$ be such a point. Given any fixed point $p \in \H^2$, the $\mu_N$-probability that $p$ is contained in a tile tends to $1$ as $N \to \infty$. Hence $\mu$ is supported on full tilings (as opposed to partial tilings). Since each $\mu_N$ is $\PSL_2(\R)$-invariant, $\mu$ is also $\PSL_2(\R)$-invariant.
\end{proof}


Recall that the {\it support} of a measure $\mu$ on a topological space $X$ is complement of the largest open subset $O$ with $\mu(O)=0$. It is denoted here by $\supp(\mu)$. The action of a group $G$ on $X$ is {\it minimal} if every orbit $Gx$ is dense in $X$.

In the example provided above, $\PSL_2(\R)$ does not act minimally on the support of any invariant probability measure $\mu$ on $\sT(P)$. Indeed, if $T$ is any tiling by $P$ then there exists a tiling $T'$ in the closure of the $\PSL_2(\R)$-orbit of $T$ that is a tiling by $\{\sigma\}$ alone. In fact, $\sT(\{\sigma\})$ is contained in the closure of the $\PSL_2(\R)$-orbit of $T$. $\PSL_2(\R)$ acts minimally on $\sT(\{\sigma\}) \subset \sT(P)$, but no $\PSL_2(\R)$-invariant Borel probability measure has support in $\sT(\{\sigma\})$. I conjecture that this phenomenon holds for every finite aperiodic tile set:
\begin{conj}\label{conjecture}
Let $Q$ be an aperiodic finite set of tiles of $\H^2$. Suppose there exists a $\PSL_2(\R)$-invariant Borel probability measure $\mu$ on $\sT(Q)$. Then $\PSL_2(\R)$ does not act minimally on the support of $\mu$.
\end{conj}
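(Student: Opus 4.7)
The plan is to argue by contradiction: assume $\PSL_2(\R)$ acts minimally on $Y := \supp(\mu)$, and derive that $Y$ must contain a periodic tiling, contradicting the aperiodicity of $Q$. So the task reduces to producing a closed $\PSL_2(\R)$-orbit inside $Y$, since a tiling $T \in \sT(Q)$ is periodic precisely when its $\PSL_2(\R)$-orbit is closed.

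First I would reduce to the ergodic case. Decompose $\mu$ into ergodic components; the support of each component lies in $Y$, and because the $\PSL_2(\R)$-action on $Y$ is assumed minimal, the support of any ergodic component is a $\PSL_2(\R)$-invariant closed subset of $Y$, hence either empty or all of $Y$. So we may replace $\mu$ by an ergodic component with full support.

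Next I would try to exploit the Lie-theoretic structure of $\PSL_2(\R)$ via a Ratner-style rigidity argument. Although $Y$ is not a homogeneous space, tiling spaces carry a local product structure: each $T \in Y$ has a neighborhood homeomorphic to the product of a small open set in $\PSL_2(\R)$ with a totally disconnected ``transversal'' parameterizing the local tile decomposition at a chosen basepoint. This is the standard structure exploited in the theory of tiling dynamical systems (as inverse limits of branched manifolds in the Euclidean case, and analogously in the hyperbolic case). Let $U < \PSL_2(\R)$ be the horocyclic one-parameter subgroup, and consider the $U$-invariant ergodic probability measures on $Y$. The hope is to show, using the local product structure together with a Mautner-type averaging argument along $U$-orbits, that every $U$-ergodic component of $\mu$ is either (i) $\PSL_2(\R)$-invariant, in which case minimality forces it to equal $\mu$, or (ii) supported on the closure of a single $\PSL_2(\R)$-orbit. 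A careful averaging over a short piece of the opposite horocycle, modeled on the Margulis--Ratner thickening trick for unipotent orbits, should then produce a closed $\PSL_2(\R)$-orbit inside $Y$; the corresponding tiling $T_0$ would have $\PSL_2(\R)$-stabilizer a lattice, contradicting aperiodicity of $Q$.

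The hard part is making any version of Ratner-type rigidity work in the non-homogeneous setting of $\sT(Q)$. The cleanest route would be to build a $\PSL_2(\R)$-equivariant measurable factor map from $(Y,\mu)$ onto some homogeneous $\PSL_2(\R)$-space $\PSL_2(\R)/\Lambda$ and invoke Ratner there; but such a factor need not exist, and even when it does, transferring a closed orbit downstairs back to an honest closed orbit in $Y$ requires controlling the transversal directions, which are precisely the ``combinatorial'' directions that are not governed by the Lie group. The example constructed in \cite{BHRS05} already shows that without minimality, any attempt of this kind must fail (the factor through $\partial \H^2$ admits no invariant probability measure), so the minimality hypothesis must enter in an essential way; I would expect the crux of the argument to be a quantitative ``no-escape-of-mass'' statement along $U$-orbits in $Y$ that forces the transversal behavior to be eventually finite, and this is where I anticipate the main obstacle.
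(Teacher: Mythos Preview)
The statement you are trying to prove is a \emph{conjecture} in the paper, not a theorem: the paper contains no proof of it. The author explicitly writes ``I conjecture that this phenomenon holds for every finite aperiodic tile set'' and then states Conjecture~\ref{conjecture} as an open problem, going on to state a discrete analogue (Conjecture~\ref{conjecture2}) and to show that the latter would imply the surface subgroup conjecture. So there is nothing in the paper to compare your proposal against; you are attempting to resolve an open problem.

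As for the proposal itself, you have correctly identified the central difficulty and, to your credit, you say so: Ratner-type rigidity is a statement about unipotent flows on homogeneous spaces $G/\Lambda$, and $\sT(Q)$ is not homogeneous. The local product structure you invoke only gives you a $\PSL_2(\R)$-orbit direction times a totally disconnected transversal; there is no algebraic structure on the transversal that would let the Margulis--Ratner thickening argument go through. Your proposed workaround, building an equivariant measurable factor onto a homogeneous space, need not exist, and even if it did, lifting a closed orbit back to $\sT(Q)$ is exactly the problem you started with. The ``no-escape-of-mass along $U$-orbits forcing the transversal to be eventually finite'' step is not an argument but a restatement of the conjecture: finiteness of the transversal is essentially periodicity. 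In short, the proposal is a reasonable outline of why one might \emph{believe} the conjecture, but it does not contain a proof, and the paper does not claim one either.
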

Recall that a {\it surface group} is the fundamental group of a closed surface of genus at least 2. Here is a discrete form of the same conjecture:



\begin{conj}\label{conjecture2}
Let $X \subset V^{\Sigma}$ be the graph subshift defined by a {\em finite} graph $\sG$ where $\Sigma$ is a surface group. If there exists a shift-invariant Borel probability measure $\mu$ on $X$ such that $\Sigma$ acts minimally on the support of $\mu$ then there exists a periodic point $x \in X$ (i.e., the stabilizer of $x$ has finite index in $\Sigma$). Moreover, if for some $v\in V$, $\mu\big(\{x \in X|x(id)=v\}\big)>0$ then there exists a periodic point $x\in X$ with $x(id)=v$.
\end{conj}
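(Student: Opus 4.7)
The plan is to generalize the weight argument of Lemma \ref{lem:core} by enlarging the combinatorial data so that the defining relation of $\Sigma$ is built into the linear equations. Fix a presentation $\Sigma = \langle S \mid R\rangle$ where $R$ is the standard $4g$-relator. Define a \emph{patch weight} $W$ to be a nonnegative function on $V(\sG)$, on the edges of $\sG$, and on the (finite) set of admissible $V$-labelings of the $4g$ cyclic positions of $R$, subject to the vertex-edge equations (\ref{eqn:weight1}) and (\ref{eqn:weight2}) of Section \ref{sec:sd} together with new \emph{marginal equations} expressing each $W(v)$ and each $W(v,w;s)$ as the appropriate sum of patch values over all admissible $R$-labelings containing that local configuration.

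First, as in Lemma \ref{lem:Wmu}, I would verify that pushforward of $\mu$ yields such a patch weight $W_\mu$ with $W_\mu(v_1) > 0$ whenever $\mu(\{x : x(id) = v_1\}) > 0$. Second, since the defining equations have integer coefficients, I would pass to a rational and hence integer-valued patch weight $W$ with $W(v_1) > 0$. Third, in analogy with the free-group construction, I would attempt to build a finite set $K$ carrying a right $\Sigma$-action and a map $\bar{x} : K \to V$: for each generator $s$, assemble partition-respecting bijections of $K$ with multiplicities prescribed by the edge weights, but now with the added requirement that the product of these bijections along $R$ equals the identity in $\mathrm{Sym}(K)$. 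A periodic point $x \in X$ with $x(id) = v_1$ would then be recovered by $x(f) = \bar{x}(k_1 \cdot f)$ exactly as in Lemma \ref{lem:core}.

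The main obstacle is precisely this last constraint: the patch weights record how often each local $R$-configuration appears, but they do not by themselves yield a compatible family of bijections whose product along $R$ is trivial. In the language of complexes of groups, one must verify that a certain holonomy obstruction vanishes, so that the local model ``develops'' into an honest finite cover rather than an infinite one. This is where I expect minimality of the $\Sigma$-action on $\supp(\mu)$ to enter essentially: a nontrivial obstruction should produce a nontrivial $\Sigma$-equivariant quotient of $\supp(\mu)$, analogous to the equivariant map $\sT(\{\sigma\}) \to \partial \H^2$ that rules out invariant probability measures in the motivating counterexample of Section \ref{section:surface}, and such a quotient would contradict minimality. Making this obstruction-theoretic step precise --- and in particular showing that minimality rules out \emph{all} such obstructions, not merely the most visible ones --- is the hard part, and is ultimately why this statement appears as a conjecture rather than a theorem.
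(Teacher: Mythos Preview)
The statement you are addressing is Conjecture~\ref{conjecture2}, and the paper offers no proof of it; it is explicitly left open, and is used only hypothetically in the theorem that follows. So there is no paper proof to compare your proposal against.

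Your proposal is not a proof either, and you say as much in your final paragraph. What you have written is a reasonable outline of the natural attempt to extend Lemma~\ref{lem:core} from free groups to surface groups --- enrich the weight data to record admissible labelings of the relator $R$, pass to an integral solution, and try to build a finite $\Sigma$-set --- together with a correct identification of where this breaks down: integer patch weights do not by themselves produce bijections of $K$ whose product along $R$ is trivial in the symmetric group on $K$. Your suggestion that minimality should be the hypothesis that kills the obstruction is plausible and matches the spirit of the paper's discussion (the aperiodic counterexample of Section~\ref{section:surface} fails minimality in exactly the way you describe), but as you acknowledge, making this precise is the entire content of the conjecture. In short, you have accurately located the difficulty rather than resolved it, which is the honest state of affairs given that no resolution is known.
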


\begin{thm}
If the conjecture above is true, then the surface subgroup conjecture is true. I.e., if $\Gamma<\PSL_2(\C)$ is a cocompact discrete group then there exists a subgroup $\Sigma <\Gamma$ that is isomorphic to the fundamental group of a closed surface of genus at least 2.
\end{thm}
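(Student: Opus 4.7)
The plan is to run the proof of Theorem \ref{thm:uniform} essentially verbatim, substituting a surface group $\Sigma$ for the free group $\F$ and Conjecture \ref{conjecture2} for Theorem \ref{thm:core}. Fix any injective homomorphism $\phi : \Sigma \to G = \PSL_2(\C)$ onto a quasi-Fuchsian (hence quasi-convex cocompact) surface subgroup; such representations exist in abundance. Given $\epsilon > 0$ small enough that Theorem \ref{thm:asymptotic}(1) yields injectivity of every $\epsilon$-perturbation of $\phi$, choose a corresponding $\delta > 0$ and a Borel partition $V = \{v_1, \ldots, v_n\}$ of the compact space $\Gamma \backslash G$ into positive-measure sets of diameter less than $\delta$. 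Build the constraint graph $\sG$ and the graph subshift $X \subset V^\Sigma$ exactly as in Section \ref{subsection:graph}. The labeling map $\Psi : \Gamma \backslash G \to X$ is $\Sigma$-equivariant, and the pushforward $\mu_0 := \Psi_*(\mu_G)$ of normalized Haar measure is a shift-invariant Borel probability measure on the compact metrizable space $X$.

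To invoke Conjecture \ref{conjecture2} I need a shift-invariant Borel probability measure $\nu$ on $X$ whose support is a minimal closed $\Sigma$-invariant subset. My plan is a Zorn's lemma argument on the nonempty, weak-$\ast$ compact space of shift-invariant Borel probability measures on $X$, partially ordered by inclusion of supports. For any chain $\{\mu_\alpha\}$, the supports $S_\alpha$ form a chain of closed sets in the compact space $X$; their intersection $S$ is nonempty by the finite intersection property, and any weak-$\ast$ subnet limit of the $\mu_\alpha$ is an invariant probability measure supported in $S$, giving a lower bound. Zorn thus furnishes a $\nu$ whose support $Y$ is minimal among such supports; from this I aim to deduce $\Sigma$-minimality of $Y$ itself. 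Then I pick any $v_* \in V$ with $\nu(\{x : x(\mathrm{id}) = v_*\}) > 0$, which exists since some vertex must be attained on a positive-measure cylinder.

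Conjecture \ref{conjecture2} then yields a periodic treequence $z \in X$ with $z(\mathrm{id}) = v_*$ and symmetry group $\Sigma_z$ of finite index in $\Sigma$. The decoding machinery of Subsections \ref{perturbations} and \ref{embedding} is group-agnostic, so the construction proceeds just as in Section \ref{sec:uniform}: taking any basepoint $p_* \in v_*$, it produces an $\epsilon$-perturbation $\phi_z : \Sigma \to G$ satisfying $p_* \phi_z(f) = p_*$ for every $f \in \Sigma_z$. Writing $p_* = \Gamma h$ for some $h \in G$, this means $\phi_z(\Sigma_z) \subset h^{-1} \Gamma h$, and so conjugating yields the injective homomorphism $f \mapsto h \phi_z(f) h^{-1}$ from $\Sigma_z$ into $\Gamma$ (injectivity comes from Theorem \ref{thm:asymptotic}(1), applied to $\phi_z$ itself). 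Its image is a subgroup of $\Gamma$ isomorphic to the finite-index subgroup $\Sigma_z < \Sigma$; since a finite-index subgroup of a closed surface group is itself a closed surface group, this gives the desired surface subgroup of $\Gamma$.

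The main obstacle is the extraction step: the Zorn argument delivers $\nu$ whose support is minimal among supports of invariant measures, but this is a priori weaker than the full $\Sigma$-minimality of $\mathrm{supp}(\nu)$ required by Conjecture \ref{conjecture2}. Since $\Sigma$ is non-amenable, a proper closed $\Sigma$-invariant subset $Y' \subsetneq Y$ could have zero $\nu$-mass and carry no invariant measure of its own, and would thus not be ruled out by support-minimality. Bridging this gap --- perhaps by restricting first to a carefully chosen minimal closed $\Sigma$-invariant subset of $\Gamma \backslash G$ that automatically carries an invariant measure, or by sharpening the Zorn extraction --- is the essential technical point on which the entire reduction turns.
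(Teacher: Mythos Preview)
You have correctly isolated the crux, and the fix you gesture at in your final paragraph is exactly the one the paper uses: rather than seeking minimality inside the symbolic space $X$, the paper first chooses a minimal closed $\Sigma$-invariant subset $Y \subset \Gamma\backslash\PSL_2(\C)$ and then produces a $\Sigma$-invariant probability measure supported on $Y$. The tool that makes this work is Ratner's theorem. Concretely, one forms the suspension
\[
\tY=\bigl\{(yg,\Sigma g)\in \Gamma\backslash\PSL_2(\C)\times \Sigma\backslash\PSL_2(\R)\ :\ y\in Y,\ g\in\PSL_2(\R)\bigr\},
\]
checks it is closed and invariant under the diagonal right $\PSL_2(\R)$-action, and invokes Ratner's measure classification to obtain a $\PSL_2(\R)$-invariant probability measure on $\tY$; slicing over a fundamental domain for $\Sigma$ in $\PSL_2(\R)$ then yields a $\Sigma$-invariant probability measure on $Y$. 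One then partitions $Y$ (not all of $\Gamma\backslash G$) and builds the subshift from that partition, so the resulting invariant measure on $X$ has minimal support as required by the conjecture. Your Zorn argument, as you note, cannot reach this conclusion for a non-amenable $\Sigma$, and there is no purely soft substitute; Ratner is doing real work here.

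There is a second genuine gap: your assertion that the decoding machinery of \S\ref{perturbations} is group-agnostic is false. The well-definedness of $\phi_x(f)=\psi_{e_1}\cdots\psi_{e_m}$ in \S\ref{perturbations} explicitly uses that $\F$ is freely generated by $S$. For a surface group one must verify that along any relator $r=s_1\cdots s_k$ and any closed edge-path $e_1,\ldots,e_k$ in $\sG$ with matching labels, the product $\psi_{e_1}\cdots\psi_{e_k}$ equals the identity. The paper secures this by choosing $\epsilon_0$ small enough that such a product lies within distance $\rho$ of the identity, where $\rho=\min_{\gamma\in\Gamma\setminus\{id\},\,g\in G} d(id,g\gamma g^{-1})>0$ by cocompactness and discreteness of $\Gamma$; since the product also lies in a conjugate of $\Gamma$ (because it fixes a basepoint in $\Gamma\backslash G$), it must be trivial. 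Without this step $\phi_x$ is not even a map, let alone an $\epsilon$-perturbation.
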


\begin{proof}
Let $\Sigma<\PSL_2(\R)$ be a discrete cocompact surface group. Let $Y$ be any subset of $\Gamma \backslash PSL_2(\C)$ that is $\Sigma$-invariant, closed and such that the action of $\Sigma$ on $Y$ is minimal. We claim that there is a $\Sigma$-invariant probability measure supported on $Y$.

 Consider the set 
$$\tY = \Big\{(y g, \Sigma g) \in \Gamma \backslash PSL_2(\C) \times \Sigma  \backslash PSL_2(\R)~|~ y\in Y, ~g\in PSL_2(\R)\Big\}.$$
We claim that this set is closed. To see this, let $\{(y_ig_i, \Sigma g_i)\}_{i=1}^\infty$ be a sequence in $\tY$ where $\{y_i\} \subset Y$ and $\{g_i\} \subset PSL_2(\R)$. Let $D \subset PSL_2(\R)$ be a compact set such that $\Sigma D=PSL_2(\R)$. Then there exists $h_i \in \Sigma $ and $d_i \in D$ such that $g_i=h_id_i$. After passing to a subsequence we may assume that $\{y_ih_i\} \subset Y$ has a limit point $y_\infty$ and that $\{d_i\}$ has a limit point $d_\infty \in D$. Thus $\{(y_ig_i,\Sigma g_i)\}=\{(y_ih_id_i,\Sigma d_i)\}$ has the limit point $(y_\infty d_\infty, \Sigma d_\infty) \in \tY$. This proves that $\tY$ is sequentially compact which implies the claim.

Since $\tY$ is invariant under the diagonal action of $PSL_2(\R)$ on the right,  Ratner's theorems on unipotent flows imply that there there exists a $PSL_2(\R)$-invariant Borel probability measure $\tmu$ supported on $\tY$. Now let $K \subset PSL_2(\R)$  be a fundamental domain for the action of $\Sigma$. That is, $K$ is a Borel set such that $\Sigma K=PSL_2(\R)$ and if $g_1 \ne g_2 \in K$ then $g_1K \cap g_2K$ has Haar measure zero. If $Y_0 \subset Y$ is Borel then define
$$\mu(Y_0) = \tmu\Big( \{ (yk,\Sigma k) \in \tY~|~ y \in Y_0, ~k\in K\}\Big).$$
Then $\mu$ is a $\Sigma$-invariant probability measure supported on $Y$. This proves the claim.

 Let $d$ denote the usual distance function on $\PSL_2(\C)$. Let $S \subset \Sigma$ be a finite symmetric generating set. Let $\rho = \min_{\gamma \in \Gamma-\{id\}, g\in G} d(id,g\gamma g^{-1})$.  Without loss of generality, let us assume that $\Sigma$ has a presentation of the form $\Sigma=\langle S | R \rangle$ where $R$ is a finite set of words in $S$. 

Choose $\epsilon_0$ so that $0<\epsilon_0 \le \epsilon$ and if $r=s_1\cdots s_k$ is in $R$ (with $s_i \in S$) if $g_0,g_1,\ldots,g_{k} \in \PSL_2(\R)$ are such that $d(\phi(s_i),g_i) \le \epsilon_0$ for all $i$ then $d\big( g_1\cdots g_k, id \big) < \rho.$


Let $\delta>0$ be such that for all $g_1, g_2 \in \PSL_2(\C)$ with $d(g_1, id)<\delta_1$ and $d(g_2, id)<\delta_1$ if $s\in S$ then, $d\big(g_1\phi(s)g_2, \phi(s)\big) < \epsilon_0.$


Let $V=\{v_1,v_2,\ldots, v_n\}$ be a Borel partition
of $Y$ into sets $v_i$ of diameter less than
$\delta$. Assume that each $v_i$ has positive $\mu$-measure.


Let $\sG$ be the graph with vertex set $V=\{v_1,\ldots, v_n\}$ and edges defined as follows. For each $v, w \in V$, if there exists elements $p \in v$, $q\in w$ and $s\in S$ such that $p s = q$ then there is a directed edge in $\sG$ from $v$ to $w$ labeled $s$. There are no other edges.

Let $X \subset V^{\Sigma}$ be the graph subshift determined by $\sG$. 


We will choose, for each $x\in X$, an $\epsilon$-perturbation $\phi_x$ of $\phi$. To get started, choose a basepoint $p_i \in v_i$ for each $i$. Assume $p_1 = \Gamma$.

 If there is an edge $e=(v,w)$ in $\sG$ labeled $s\in S$ then there exists points $p \in v, q\in w$ such that $p s=q$. Let $p_v, q_w$ be the basepoints of $v$ and $w$ respectively. Because $v$ and $w$ each have diameter at most $\delta$, there exists elements $g_v, g_w \in \PSL_2(\C)$ such that $d(g_v, id)<\delta$, $d(g_w,id)<\delta$, $p_vg_v=p$ and $q_w g_w=q$.

Let $\psi_e=g_v s g_w^{-1}$. Note that $p_v \psi_e = p_v g_v s g_w^{-1}=q_w$. By choice of $\delta$, $d(\psi_e, s) < \epsilon$.

There is an edge $e'= (w,v)$ in $\sG$ labeled $s^{-1}$. Choose $\psi_{e'}$ so that $\psi_{e'} = \psi_e^{-1}$. 

Let $x \in X$. For $f \in \Sigma$, represent $f$ as $f=t_1\cdots t_m$ for some $t_i \in S$. Let $t_0=id$.

Let $\phi_x(f)=\psi_{e_1} \cdots \psi_{e_m}$ where $e_i$ is the edge from $x(t_0 \cdots t_{i-1})$ to $x(t_1 \cdots t_{i})$ labeled $t_i$. 

To show that $\phi_x$ is well-defined, it suffices to show that if $r=s_1\cdots s_k \in R$ is a relator (with $s_i \in S$) and $e_1,e_2,\ldots,e_k$ is a directed cycle in $\sG$ such that $e_i$ is labeled $s_i$ for all $i$, then $\psi_{e_1} \cdots \psi_{e_k} = id$. Let $v$ be the source of $e_1$ and $p\in v$ its basepoint. Since $e_1,\ldots,e_k$ is a directed cycle, $ p \psi_{e_1} \cdots \psi_{e_k} = p$. Thus, if $p=\Gamma g$ for some $g\in \PSL_2(\C)$, then $\psi_{e_1}\cdots \psi_{e_k} \in g^{-1}\Gamma g$. 

As noted above, $d(\psi(e_i),\phi(s_i)) < \epsilon_0$. So by the choice of $\epsilon_0$, $d\big( \psi_{e_1} \cdots \psi_{e_k} , id \big) < \rho$. By definition of $\rho$, this implies $\psi_{e_1}\cdots \psi_{e_k} = id$. Thus, $\phi_x$ is well-defined.

The rest of the proof is exactly the same as the proof of theorem \ref{thm:uniform} beginning with lemma \ref{lem:start} except in one detail. We must invoke the conjecture above to ensure the existence of a periodic point.

\end{proof}

\end{document}